\documentclass[10pt]{amsart}
\usepackage{amssymb}
\usepackage{amsmath}

\usepackage{tikz}
\usepackage{url}

\usepackage{apptools}
\usepackage{url}
\usepackage{cancel}
\usepackage{apptools}
\usepackage{soul}

\usepackage{hyperref}
\hypersetup{
	pdfstartview={XYZ null null 1.00}, 
	pdfpagemode=UseNone, 
	colorlinks,
	breaklinks, 
	linkcolor=blue,
	urlcolor=blue, 
	citecolor=blue
}

\usepackage{todonotes}
 
\usepackage{comment}

\author{Konrad Deka}
\address{Faculty of Mathematics and Computer Science, Jagiellonian University in Krakow, ul. {\L}ojasiewicza 6, 30-348 Krak\'{o}w, Poland}
\email{konrad.deka@uj.edu.pl}
\author{Dominik Kwietniak}
\address{Faculty of Mathematics and Computer Science, Jagiellonian University in Krakow, ul. {\L}ojasiewicza 6, 30-348 Krak\'{o}w, Poland}
\email{dominik.kwietniak@uj.edu.pl}

\author{Bo Peng}
\address{Department of Mathematics and Statistics, McGill University, 805 Sherbrooke St W., H3A 0B9  Montreal, Canada}
\email{bo.peng3@mail.mcgill.ca}

\author{Marcin Sabok}
\address{Department of Mathematics and Statistics, McGill University, 805 Sherbrooke St W., H3A 0B9  Montreal, Canada}
\email{marcin.sabok@mcgill.ca}

\thanks{The first author's visit to Montreal was funded by the NCN  NAWA Preludium Bis  grant PPN/STA/2021/1/00083/U/00001. The second author is funded by the NCN Preludium Bis grant 2022/47/O/ST1/03299. The third and the fourth authors are partly funded by the NSERC Discovery Grant  RGPIN-2020-05445.}
\newtheorem*{theorem*}{Theorem}
\newtheorem{theorem}{Theorem}[section]
\newtheorem{lemma}[theorem]{Lemma}
\newtheorem{claim}[theorem]{Claim}
\newtheorem{proposition}[theorem]{Proposition}
\newtheorem{corollary}[theorem]{Corollary}
\newtheorem{question}[theorem]{Question}

\theoremstyle{definition}
\newtheorem{definition}[theorem]{Definition}

\theoremstyle{remark}
\newtheorem{remark}[theorem]{Remark}

\numberwithin{equation}{section}

\newcommand{\Z}{\mathbb{Z}}
\newcommand{\B}{\{a_1, b_1, c_1, d_1, a_2, b_2, c_2, d_2\}}
\newcommand{\Aut}{\mathrm{Aut}}

\newcommand{\homeo}{\mathrm{Homeo}}
\newcommand{\hilbert}{{[0,1]^{\mathbb{N}}}}

\newcommand{\orbit}{O}
\newcommand{\Lang}{\mathrm{Lang}}
\newcommand{\gwiazdka}{\#}
\newcommand{\Ag}{A}

\newcommand{\A}
{A\setminus\{\gwiazdka\}}
\newcommand{\SA}{\mathcal{S}(\Ag)}
\newcommand{\SAb}{{\SA}_\flat}

\newcommand{\xr}{X(R)}
\newcommand{\xrb}{(X(R))_\flat}
\newcommand{\fr}{F(R)}

\newcommand{\Ae}{(\A)^*_{\text{even}}}
\newcommand{\Ao}{(\A)^*_{\text{odd}}}
\newcommand{\ce}{{\code}^*_{\text{even}}}
\newcommand{\co}{{\code}^*_{\text{odd}}}

\newcommand{\code}{B}
\newcommand{\codeodd}{\code^*_{\text{odd}}}

\newcommand{\codenohash}{\code\setminus\{\gwiazdka\}}
\newcommand{\codenohashodd}{(\code\setminus\{\gwiazdka\})^*_{\text{odd}}}

\newcommand{\SB}{\mathcal{S}(\code)}

\newcommand{\Agb}{\Ag_\flat}
\newcommand{\evenb}{(\ce)_\flat}
\newcommand{\oddb}{(\co)_\flat}

\newcommand{\alphabet}{A}

\newcommand{\class}{\mathcal{S}(\alphabet)}
\newcommand{\classflat}{\mathcal{S}(\code)_\flat}

\newcommand{\FrG}{\mathbb{F}_2}
\newcommand{\sspace}{{{\code}}^{\Z}_{\flat}}
\newcommand{\Bg}{{\{a_1, b_1, c_1, d_1, a_2, b_2, c_2, d_2 \gwiazdka\}}}

\newcommand{\Bo}{\B^*_{\text{odd}}}
\begin{document}
\title[Conjugacy of systems with specification]{Bowen's Problem 32 and the conjugacy problem for systems with specification}

\date{}

\dedicatory{}
\begin{abstract}
We show that Rufus Bowen's Problem 32 on the classification of symbolic systems with the specification property does not admit a solution that would use concrete invariants. To this end, we construct a class of symbolic systems with the specification property and show that the conjugacy relation on this class is too complicated to admit such a classification. More generally, we gauge the complexity of the classification problem for symbolic systems with the specification property. Along the way, we also provide answers to two questions related to the classification of pointed systems with the specification property: to a question of Ding and Gu related to the complexity of the classification of pointed Cantor systems with the specification property and to a question of Bruin and Vejnar related to the complexity of the classification of pointed  Hilbert cube systems with the specification property.

\end{abstract}
\maketitle

\section{Introduction}

The methods of mathematical logic can be useful in establishing  impossibility results. 
For example, a descriptive set-theoretic complexity argument was used by Wojtaszczyk and Bourgain  who solved Problem 49 from the Scottish book by establishing the non-existence of certain types of Banach spaces, see \cite[Section 33.26]{kechris2012classical}. In a similar vein, the theory of complexity of Borel equivalence relations can be used to demonstrate the impossibility of classifying certain mathematical objects. An equivalence relation $E$ on a standard Borel space $X$ is \textbf{smooth} if there exists a Borel assignment $f\colon X\to Y$ of elements of another standard Borel space $Y$ to elements of $X$ which provides a complete classification of the equivalence relation $E$, i.e., two elements $x,y\in X$ are $E$-related if and only if  $f(x)=f(y)$. The definition is broad enough to justify a Borel version of the Church--Turing thesis, namely that an isomorphism relation admits a concrete classification if and only if it is smooth.  For example, recently Panagiotopoulos, Sparling and Christodoulou \cite{panagiotopoulos2023incompleteness}  utilized this notion to show that there does not exist a  concrete observable that is complete and Borel definable, solving a long-standing problem in general relativity.


Smooth equivalence relations are actually only at the beginning of a larger hierachy of descriptive set-theoretic complexity. Given two equivalence relations $E$ and $F$ on standard Borel spaces $X$ and $Y$, respectively, we say that $E$ is \textbf{Borel-reducible} to $F$, written $E\leq_B F$ if there exists a Borel map  $f\colon X\to Y$ such that for $(x_1,x_2)\in X\times X$ we have $(x_1,x_2)\in E$ if and only if $(f(x_1),f(x_2)))\in F$. The Borel complexity of an isomorphism problem measures how complicated the problem is in comparison with other equivalence relations, when we compare equivalence relations using Borel reductions. 
Equivalence relations which are Borel-reducible to the equality $=$ on the real numbers, or anything that can be coded by real numbers, are exactly the smooth ones. However, the hierarchy goes much higher. The next step is formed by the \textbf{hyperfinite} equivalence relations \cite{dougherty1994structure}, that is those which are induced by Borel actions of the group $\mathbb{Z}$. Another successor of $=$ is defined in terms of the Friedman--Stanley version of the Turing \textbf{jump} \cite{friedman1989borel}, and is denoted by $=^+$. The jump can be iterated and all of the countable iteration of $^+$ on $=$ are induced by Borel actions of the group $S_\infty$ of permutations of $\mathbb{N}$. The class of equivalence relations reducible to actions of $S_\infty$ is quite large (cf. the recent result of Paolini and Shelah \cite{paolinishelah}) but not every Borel equivalence relation is in that class. In \cite{Hjorth} Hjorth developed the theory of turbulence and showed that turbulent group actions are not Borel reducible to any Borel $S_\infty$-action.  
The theory of complexity of equivalence relations also continues in the class of equivalence relations induced by actions of more general groups than $S_\infty$ (cf. \cite{clemens2012isometry,gao2003classification,sabok2016completeness,zielinski2016complexity}).

   A lot of effort has been put in measuring the complexity of problems arising in dynamical systems. 
The breakthrough for measure-preserving systems came with the results of Foreman and Weiss \cite{foreman2004anti} and of Foreman, Rudolph and Weiss \cite{foreman2011conjugacy}.
   In \cite{foreman2004anti} Foreman and Weiss showed that the conjugacy relation of ergodic transformations is turbulent and in \cite{foreman2011conjugacy} Foreman, Rudolph and Weiss showed that the conjugacy relation of ergodic transformations is not Borel, and thus the classification problem in ergodic theory is intractable.  In topological dynamics, Camerlo and Gao \cite{CamerloGao} proved that the conjugacy of Cantor systems is the most complicated among equivalence relation induced by actions of $S_\infty$ and for minimal Cantor systems it is shown in \cite{DGKKK} that the relation is not Borel. 

   In contrast, the conjugacy relation of symbolic systems is quite simple from the point of view of descriptive set theory. Since every isomorphism between  symbolic systems is given by a block code \cite{hedlund}, the conjugacy of symbolic systems is a \textbf{countable Borel equivalence relation}, i.e. a Borel equivalence relation whose equivalence classes are countable. 
   Clemens \cite{Clemens} proved that for a finite alphabet $A$ the topological conjugacy of symbolic systems of $A^\Z$ is a universal countable Borel equivalence relation. In \cite{GaoJacksonSeward} Gao, Jackson and Seward generalized it from subsystems of $A^\Z$ to subsystems of $A^G$ where $G$ is a countable group which is not locally finite, while for a locally finite group $G$ they showed that the conjugacy of subsystems of $A^G$ is hyperfinite. It remains unknown whether the conjugacy of minimal subsystems of $A^\Z$ is a universal countable Borel equivalence relation, which is connected to a conjecture of Thomas \cite[Conjecture 1.2]{Thomas} on the isomorphism of complete groups. In fact, it is not known \cite[Question 1.3]{SabokTsankov} whether the conjugacy relation restricted to Toeplitz systems is hyperfinite or not.

   
    A special class of symbolic systems is formed by systems with \textbf{specification}, considered by Bowen \cite{Bowen71}. A dynamical system satisfies the specification property if for every $\varepsilon>0$ we can find $k\in\mathbb{N}$ such that given any collection of finite fragments of orbits, there exists a point which is $\varepsilon$-closely following these orbit segments and takes $k$ steps to switch  between consecutive orbit segments (a formal definition is given in Section \ref{sec:specification}). Nowadays, the specification property in symbolic systems for general discrete groups goes also under the name of \textbf{strong irreducibility}, as discussed for example by Glasner, Tsankov, Weiss and Zucker \cite{glasner2021bernoulli}, Frisch and Tamuz \cite{frisch2017symbolic} and Frisch, Seward and Zucker \cite{frisch2024minimal}. Around 1970's Bowen wrote an influential list of open problems, which is now maintained on the webpage \cite{rufus.list}. Problem 32 \cite{rufus}  asks to 
   \begin{center}
       \textit{classify symbolic systems with specification}.
   \end{center}
   Several results in the direction of a classification have been obtained, as reported on the website \cite{rufus}. For example Bertrand \cite{Bertrand} proved that every symbolic system with specification is synchronized and Thomsen \cite{thomsen2006ergodic} found a connection with the theory of countable state Markov chains. However, a complete classification has not been obtained. Buhanan and Kwapisz \cite{buhanan2014cocyclic} proved a result suggesting that systems with specification are quite complicated. They considered the cocyclic shift spaces, which is a countable family of symbolic systems with specification and  proved in \cite[Theorem 1.1]{buhanan2014cocyclic} that the problem of equality of cocyclic shift spaces is undecidable. However, the equality relation is a much simpler relation than the conjugacy on such systems. In this paper we prove the following, which shows that a classification with any concrete invariants is impossible.

   \begin{theorem}\label{rufus}
       The conjugacy relation of symbolic systems with the specification property is not smooth.
   \end{theorem}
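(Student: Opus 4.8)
The plan is to show that $E_0$, the relation of eventual equality on $2^{\mathbb{N}}$, Borel-reduces to the conjugacy relation on symbolic systems with specification; since $E_0$ is not smooth and smoothness is downward closed under $\leq_B$, this immediately yields Theorem \ref{rufus}. Concretely, I would build a Borel map $x \mapsto X_x$ sending a binary sequence $x \in 2^{\mathbb{N}}$ to a subshift $X_x \subseteq A^{\mathbb{Z}}$, for a fixed finite alphabet $A$, so that each $X_x$ has the specification property with one and the same gap constant, and so that $X_x$ is conjugate to $X_y$ exactly when $x \mathrel{E_0} y$. Measurability of the assignment (with subshifts coded, say, by their languages as points of a standard Borel space) is routine once the blocks defining $X_x$ depend on finite initial segments of $x$ in an explicit way, so the content of the proof lies in the dynamical encoding.

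The encoding must be hierarchical. I would fix rapidly growing scales $\ell_0 < \ell_1 < \cdots$ and arrange that the coordinate $x_n$ governs only the combinatorics of $X_x$ at scale $\ell_n$, for instance by toggling the admissibility of a designated family of level-$n$ patterns delimited by a marker symbol $\#$. Specification is then enforced uniformly by permitting free concatenation of admissible words with a bounded buffer at every scale: the marker $\#$ supplies a fixed-length switching word $w$ with $|w|$ independent of $x$, so that $uwv$ is admissible whenever $u$ and $v$ are, giving the specification constant $|w|$ for every $X_x$ simultaneously. The delicate point is to keep the levels from interfering, so that a change in a single $x_n$ genuinely alters $X_x$ while leaving all other scales intact.

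For the easy direction, $x \mathrel{E_0} y$ means $x$ and $y$ differ at only finitely many scales; I would exhibit an explicit sliding block code, together with its inverse, that relabels the affected low-scale patterns, producing a conjugacy $X_x \cong X_y$. For the converse, I would invoke the Curtis--Hedlund--Lyndon theorem \cite{hedlund}: any conjugacy $X_x \to X_y$ is a sliding block code with some finite window of radius $w$. The key structural claim is that such a window can only rearrange the scales $\ell_n \leq R(w)$ below some threshold determined by $w$, while every scale $\ell_n$ above the threshold is a conjugacy invariant read off from the long-range pattern statistics of the system; hence $x_n = y_n$ for all but finitely many $n$, that is, $x \mathrel{E_0} y$. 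This gives both implications and completes the reduction.

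The main obstacle is the tension inherent in this last claim: specification forces the systems to be strongly mixing and hence to look alike locally, which is precisely what makes it hard to extract a conjugacy invariant sensitive to the tail of $x$. The crux of the argument is therefore the design of the level-$n$ gadgets so that they are simultaneously (i) freely concatenable with a uniform buffer, securing specification, and (ii) rigid enough that their scale and admissibility pattern survive any finite-window recoding, securing that the tail of $x$ is recoverable from the conjugacy class of $X_x$. Verifying (ii) --- essentially a rigidity lemma showing that large-scale markers cannot be created or destroyed by a bounded block code --- is where I expect the real work to lie.
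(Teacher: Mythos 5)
Your plan hinges on a Borel reduction of $E_0$, and the decisive step---your ``rigidity lemma'' asserting that any sliding block code conjugacy $X_x\to X_y$ can only disturb scales below a threshold $R(w)$, so that all higher scales are conjugacy invariants---is exactly the part you do not prove, and it is not a routine verification but the entire mathematical content of the approach. Nothing in the proposal explains what these alleged invariants are or why a bounded-window code cannot scramble the high-scale marker structure; as you yourself note, specification forces the systems to be topologically mixing and locally indistinguishable, which is precisely what defeats naive invariant-extraction arguments. (The paper's authors are well aware of this difficulty: they report that an announced proof of universality for this conjugacy relation contained a mistake, and the exact complexity remains open.) There is also a problem already in the easy direction as you describe it: ``toggling the admissibility'' of a level-$n$ pattern (present versus absent) cannot produce conjugate systems in general, since a conjugacy preserves the number of periodic points of each period, and adding an admissible marker-delimited pattern typically creates new periodic orbits. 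What works is \emph{swapping} one pattern for another of the same length (as in $\gwiazdka a^{2n-1}\gwiazdka \leftrightarrow \gwiazdka a^{n-1}ba^{n-1}\gwiazdka$), so that the two systems are relabelings of one another; this is fixable, but your converse direction is not fixed by any idea on the table.

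The paper avoids your hard step entirely: it never determines when two systems in its family are conjugate. It constructs the family $X(R)$, $R\subseteq(\A)^*_{\mathrm{odd}}$, of subshifts with a uniform specification constant, and lets the countable group of $\gwiazdka$-preserving automorphisms of the full shift act on it; these automorphisms are block codes, so they induce conjugacies---this is only the ``easy'' direction, the analogue of your finite-relabeling step. Non-smoothness is then obtained \emph{softly}: by Curtis--Hedlund--Lyndon the conjugacy relation is a countable Borel equivalence relation, a Borel subequivalence relation of a smooth countable Borel equivalence relation is smooth, and the orbit subrelation of the automorphism action is shown to be non-smooth via a condensed point (Osin, Calderoni--Clay): the system $X_0$ with $R_0=\{a,a^3,a^5,\dots\}$ is an accumulation point of its own orbit, witnessed by the swap automorphisms $\varphi_n$. (A parallel argument with free measure-preserving actions of nonamenable automorphism groups even gives non-hyperfiniteness.) Note that once non-smoothness is known, the Harrington--Kechris--Louveau dichotomy guarantees that an embedding of $E_0$ \emph{exists}; exhibiting one explicitly, as you propose, would require precisely the rigidity analysis you have left open, which is why your proposal has a genuine gap rather than being an alternative proof.
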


   In fact, in Theorem \ref{rufus.hyperfinite} we prove that the conjugacy relation of symbolic systems with specification is not hyperfinite and essentially the same proof shows that it is not treeable; see Theorem \ref{rufus.treeable}. 

Next, we will look the conjugacy relation of pointed systems with the specification property and consider its complexity. It turns out that in order to compute the complexity for pointed Cantor systems with the specification property, we need to solve a problem posed in a paper of Ding and Gu \cite{DG}.

  In \cite{DG} Ding and Gu consider the equivalence relation $E_{\mathrm{cs}}$ defined on the space of metrics on $\mathbb{N}$, where two metrics are equivalent if the identity map on $\mathbb{N}$ extends to a homeomorphism of the completions of $\mathbb{N}$ with respect to those two metrics. The restriction of $E_{\mathrm{cs}}$ to the set of metrics whose completion is compact is denoted by $E_{\mathrm{csc}}$. This is a natural equivalence relation from the point of view of descriptive set theory, and it is interesting to ask what is its complexity. Indeed, Ding and Gu ask \cite[Question 4.11]{DG} whether for a given countable ordinal $\alpha$ and a natural number $n$, the restriction of $E_{\mathrm{csc}}$ to the metrics whose completion is homeomorphic to $\omega^{1+\alpha}\cdot n+1$ is Borel-reducible to $=^+$. Even though this question does not seem directly connected  to the topological conjugacy of systems with specification, we find a connection between the relation $E_{\mathrel{csc}}$ and Cantor systems, using a construction coming from the work of Williams \cite{williams1} and the work of Kaya \cite{KayaSubshifts} and we answer it in the positive, by showing a slightly stronger statement.  By $\mathbb{X}_{0\textrm{-dim}}$ we denote the set of metrics on $\mathbb{N}$ whose completion is zero-dimensional. The result below implies in particular a positive answer to \cite[Question 4.11]{DG}.

\begin{theorem}\label{ding.gu} The relation 
         $E_{\mathrm{csc}}$ restricted to  $\mathbb{X}_{0\textrm{-dim}}$ is Borel bi-reducible with $=^+$.
     \end{theorem}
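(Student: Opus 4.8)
The plan is to identify a completely canonical countable complete invariant for $E_{\mathrm{csc}}$ on $\mathbb{X}_{0\textrm{-dim}}$ and read off bi-reducibility with $=^+$ from it. First I would reformulate the relation. Given a metric $d$ on $\mathbb{N}$ with compact, zero-dimensional completion $K_d$, note that every clopen set $U\subseteq K_d$ is the closure of its trace $U\cap\mathbb{N}$ (as $\mathbb{N}$ is dense), so the trace map $U\mapsto U\cap\mathbb{N}$ is an injection of the clopen algebra $\mathcal{B}(K_d)$ into $2^{\mathbb{N}}$. By compactness a subset $S\subseteq\mathbb{N}$ is such a trace exactly when $d(S,\mathbb{N}\setminus S)>0$, and since $K_d$ is a compact zero-dimensional metric space its clopen algebra is countable. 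Thus
\[
\mathcal{A}(d)=\{\,S\subseteq\mathbb{N} : d(S,\mathbb{N}\setminus S)>0\,\}\subseteq 2^{\mathbb{N}}
\]
is a countable subset. The key structural claim is that $d_1\,E_{\mathrm{csc}}\,d_2$ if and only if $\mathcal{A}(d_1)=\mathcal{A}(d_2)$ as subsets of $2^{\mathbb{N}}$: if the identity on $\mathbb{N}$ extends to a homeomorphism $h\colon K_{d_1}\to K_{d_2}$ then, since $h$ fixes $\mathbb{N}$ pointwise, $h(U)\cap\mathbb{N}=U\cap\mathbb{N}$ for every clopen $U$, so the trace algebras coincide; conversely, if $\mathcal{A}(d_1)=\mathcal{A}(d_2)=\mathcal{A}$, then both $(K_{d_i},\mathbb{N})$ are recovered by Stone duality as the Stone space of the abstract Boolean algebra $\mathcal{A}$ together with the marked ultrafilters $\hat n=\{A\in\mathcal{A} : n\in A\}$, a datum depending only on $\mathcal{A}$, which yields the required homeomorphism fixing $\mathbb{N}$.

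For the upper bound $E_{\mathrm{csc}}\!\restriction\!\mathbb{X}_{0\textrm{-dim}}\leq_B\,=^+$ I would make $d\mapsto\mathcal{A}(d)$ into a Borel map into $(2^{\mathbb{N}})^{\mathbb{N}}$ whose range encodes the countable set $\mathcal{A}(d)$. The set $R=\{(d,S):d(S,\mathbb{N}\setminus S)>0\}$ is Borel (it is $F_\sigma$ in $S$ uniformly in $d$, being $\bigcup_k\{(d,S):d(s,t)\ge 1/k\text{ for all }s\in S,\ t\notin S\}$), and by the discussion above each section $R_d=\mathcal{A}(d)$ is countable. The Luzin--Novikov uniformization theorem then provides Borel functions $s_n\colon d\mapsto s_n(d)\in 2^{\mathbb{N}}$ with $\{s_n(d):n\in\mathbb{N}\}=\mathcal{A}(d)$, so $d\mapsto(s_n(d))_n$ is Borel and, by the reformulation, reduces $E_{\mathrm{csc}}\!\restriction\!\mathbb{X}_{0\textrm{-dim}}$ to $=^+$.

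For the lower bound $=^+\leq_B E_{\mathrm{csc}}\!\restriction\!\mathbb{X}_{0\textrm{-dim}}$ I would code a countable set of reals by a ``decoration'' of Cantor space. Given $\bar x=(x_n)_n\in(2^{\mathbb{N}})^{\mathbb{N}}$ with range $R=\{x_n:n\}$, I would construct, in a Borel way, a compact zero-dimensional space $K_{\bar x}$ obtained from $2^{\mathbb{N}}$ by blowing up each point of $R$ into a small convergent sequence (a canonical ``broom'' accumulating at that point), equip a fixed countable dense set $\mathbb{N}$ of marked points with the induced metric $d_{\bar x}$, and verify that the decorated points are exactly recoverable from the local clopen structure, so that $R$ is reconstructible from $\mathcal{A}(d_{\bar x})$. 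The main obstacle is precisely the rigidity forced by $E_{\mathrm{csc}}$: because the extending map must fix $\mathbb{N}$ pointwise, there is no permutation freedom to match the column coding $x_n$ with the column coding $x_{\sigma(n)}$, so a naive enumeration-indexed construction fails. The construction must therefore depend, up to $E_{\mathrm{csc}}$, only on the set $R$ and not on the enumeration $\bar x$; I would achieve this by making the decoration genuinely set-based (attaching the same canonical broom to every point of $R$, placed by its position in $2^{\mathbb{N}}$ rather than by its index $n$), and then check that $\{x_n\}=\{y_n\}$ forces $\mathcal{A}(d_{\bar x})=\mathcal{A}(d_{\bar y})$ while distinct ranges give distinct invariants. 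Combining the two reductions yields Borel bi-reducibility with $=^+$; finally, since each ordinal $\omega^{1+\alpha}\cdot n+1$ is a compact zero-dimensional space, this in particular answers \cite[Question 4.11]{DG} positively.
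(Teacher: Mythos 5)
Your upper bound is correct, and it is a genuinely different and more elementary route than the paper's. The paper proves \emph{both} directions by passing through the conjugacy relation of pointed minimal Cantor systems: the reduction of $E_{\mathrm{csc}}\restriction\mathbb{X}_{0\textrm{-dim}}$ to that relation is the hard technical core (the Oxtoby/Williams machinery: periodic parts, maximal equicontinuous factors, Lemma on Oxtoby sequences), and the identification of that relation with $=^+$ is Kaya's theorem \cite{KayaSubshifts}. Your trace-algebra argument — clopen sets of the completion correspond exactly to $S\subseteq\mathbb{N}$ with $d(S,\mathbb{N}\setminus S)>0$, Stone duality with the marked ultrafilters $\hat n$ shows $d_1\,E_{\mathrm{csc}}\,d_2$ iff $\mathcal{A}(d_1)=\mathcal{A}(d_2)$, and Luzin--Novikov enumerates the countable sections in a Borel way — is sound and completely avoids dynamics. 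Since Ding and Gu's Question 4.11 asks only for reducibility to $=^+$, this half of your proposal already answers it.

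The lower bound, however, has a genuine gap, and the fix you propose cannot work. Two concrete obstructions. First, a ``set-based'' broom placement, where the broom attached to $x\in R$ (its location, scale, and the indices of its points) depends only on $x$ and not on the enumeration, is incompatible with compactness: in a compact completion, for each $\varepsilon>0$ only finitely many isolated points can be pairwise $\varepsilon$-separated, so the broom scales must shrink along \emph{some} ordering of $R$ — and producing such an ordering Borel-canonically from the set $R$ alone is exactly the selection problem that makes $=^+$ non-smooth in the first place. Second, and more fundamentally: whatever the placement, the marked dense set must be indexed by $\mathbb{N}$ through the given enumeration $\bar x$ (say index $(n,m)$ names the $m$-th broom point of $x_n$), and then cross-broom Cauchy sequences of \emph{indices} detect the enumeration. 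By compactness the brooms must accumulate, so there are index sequences $(n_k,m_k)$ with $m_k\to\infty$ whose Cauchyness in $d_{\bar x}$ is equivalent to convergence of $(x_{n_k})$ in $2^{\mathbb{N}}$; if $\bar y=\bar x\circ\pi$ is a re-enumeration of the same range, Cauchyness in $d_{\bar y}$ is instead equivalent to convergence of $(x_{\pi(n_k)})$, and these are not equivalent in general. Since by \cite[Proposition 2.2]{DG} $E_{\mathrm{csc}}$ is precisely ``the identity on $\mathbb{N}$ preserves Cauchy sequences,'' equal ranges do \emph{not} yield $E_{\mathrm{csc}}$-equivalent metrics, so the map is not a reduction.

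The cure is what the paper does: compose Kaya's theorem ($=^+$ is Borel reducible to conjugacy of pointed minimal Cantor systems, proved there by a Toeplitz-type construction engineered so that re-enumerations of the same set give pointed-conjugate systems) with the easy reduction sending a pointed minimal Cantor system $(X,\varphi,x)$ to the metric that $X$ induces on the orbit $\{\varphi^n(x):n\in\mathbb{N}\}\cong\mathbb{N}$; the entire re-enumeration difficulty is absorbed into Kaya's construction. Pursuing your direct decoration instead would essentially amount to reproving Kaya's theorem, which is a substantial piece of work, not a verification.
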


  
    Finally, in \cite{BV} Bruin and Vejnar 
   also studied the conjugacy relation of pointed transitive systems
    \begin{table}[ht]
    \begin{tabular}{r|c|c}
    \hline
     & homeomorphisms & pointed transitive homeomorphisms \\
    \hline
    interval & Borel-complete & $\emptyset$ \\  
    circle 
        & Borel-complete
        & $=$
      \\
    Cantor set
        & Borel-complete
        & $=^+$
        \\
    Hilbert cube
        & complete orbit e.r.
        & ?
        \\
    \end{tabular}
    \caption{Source: \cite[Table 1]{BV}.}
    \end{table}
    and asked \cite[Table 1, Question 5.6]{BV} about the complexity of the conjugacy of pointed transitive homeomorphisms of the Hilbert cube. It turns out that the conjugacy of pointed transitive homeomorphisms of the Hilbert cube has the same complexity as the conjugacy relation of Hilbert cube systems with the specification property and in this paper we answer this question as follows. 

    \begin{theorem}\label{bruin.vejnar}
       The conjugacy relation of pointed transitive Hilbert cube systems is Borel bireducible with a turbulent group action.
   \end{theorem}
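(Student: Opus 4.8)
The plan is to realize the conjugacy relation as an orbit equivalence relation and then sandwich it between two reductions involving a fixed turbulent relation. First I would reformulate the problem in terms of orbit data. A pointed transitive system $(\phi, x)$ on $\hilbert$ is determined by its orbit $(\phi^n(x))_{n \in \Z}$, an injective sequence whose image is dense and on which the shift $n \mapsto n+1$ extends to a homeomorphism of $\hilbert$. Since any conjugacy $h$ must satisfy $h(\phi_1^n(x_1)) = \phi_2^n(x_2)$, it is forced to preserve the indexing; thus $(\phi_1,x_1)$ and $(\phi_2,x_2)$ are conjugate if and only if the identity map on the index set $\Z$ extends to a homeomorphism of the orbit closures. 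This exhibits the conjugacy relation as bireducible with the restriction of an $E_{\mathrm{csc}}$-type relation to metrics on $\Z$ whose completion is $\hilbert$ and on which the shift extends. It also explains the jump in complexity relative to the Cantor case of Theorem \ref{ding.gu}: for zero-dimensional completions the homeomorphism type relative to the dense orbit is classified by countable, ordinal-type invariants, yielding $=^+$, whereas the connectedness of $\hilbert$ forces genuinely continuous invariants.

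For the upper bound I would present the relation as the orbit equivalence relation of the conjugation action of the Polish group $\homeo(\hilbert)$ on the Polish space of pointed transitive systems. The idea is that, by the homogeneity of $\hilbert$ together with the $Z$-set unknotting and absorption theorems of infinite-dimensional topology, the discrete topological ambiguity in how the dense orbit sits inside $\hilbert$ can be absorbed by ambient homeomorphisms, so that the only surviving conjugacy invariant is the continuous metric position of the orbit. I would then use Hjorth's criterion \cite{Hjorth} to verify that on a comeager invariant Polish subset of generic systems the action is turbulent: orbits are meager and dense, and every local orbit is somewhere dense because small ambient perturbations can be concatenated along continuous paths of homeomorphisms. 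One finally checks that the conjugacy relation is Borel bireducible with the restriction of the action to this turbulent part.

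For the lower bound I would exhibit a Borel reduction of a concrete turbulent relation --- for definiteness the $c_0$-equivalence relation on $\mathbb{R}^{\Z}$, which is turbulent by \cite{Hjorth} --- into the conjugacy relation. Using the construction coming from Williams \cite{williams1} and Kaya \cite{KayaSubshifts} that underlies the connection with systems having the specification property, I would build from a fixed transitive \emph{skeleton} specification system a family of Hilbert cube systems in which a real sequence is encoded as a geometric perturbation of the positions of the orbit points, arranged so that the shift still extends to a homeomorphism and the orbit remains dense. The encoding must be rigid: two such systems should be conjugate precisely when the two parameter sequences agree modulo $c_0$, so that conjugacy matches $c_0$-equivalence exactly.

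The main obstacle, as in most turbulence arguments, is the simultaneous verification of Hjorth's local-orbit density condition and the rigidity of the encoding under the dynamical constraint. On the upper-bound side I must show that local orbits of the conjugation action are somewhere dense, i.e.\ that nearby generic systems can be connected by short conjugacies; this is where the flexibility provided by the infinite-dimensional topology of $\hilbert$ is essential and must be made quantitative. On the lower-bound side I must ensure that the perturbations change the conjugacy class exactly according to the $c_0$ relation and cannot be undone by any ambient homeomorphism respecting the dynamics --- that is, that the continuous parameter is a complete invariant while the shift-extension requirement is preserved throughout the construction. Reconciling these opposing flexibility and rigidity demands is the delicate heart of the argument; once they are in place, the sandwich yields Borel bireducibility with a turbulent group action.
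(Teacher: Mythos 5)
Your overall logical scheme does not prove the statement as written. The theorem demands a \emph{single} turbulent Polish group action $E$ with Borel reductions in both directions between $E$ and the conjugacy relation. Your plan instead sandwiches conjugacy between two \emph{different} relations: a reduction of $c_0$-equivalence into conjugacy (lower bound) and a reduction of conjugacy into a generic part of the conjugation action of $\homeo(\hilbert)$ (upper bound). Since turbulent relations are not all Borel bireducible with one another, such a sandwich yields only non-classifiability by countable structures; it does not produce bireducibility with one turbulent action. So the whole weight falls on your upper-bound claim that conjugacy is bireducible with the restriction of the conjugation action to a comeager invariant set on which that action is turbulent, and that claim is unsupported in two places. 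First, density of orbits: you would need the conjugacy class of a (generic) pointed transitive system to be dense among pointed transitive systems; nothing in your sketch addresses this, and it is a genuinely nontrivial property of the conjugation action of $\homeo(\hilbert)$, not a consequence of $Z$-set unknotting, which only gives the \emph{local} orbit condition. Second, even granting turbulence on a comeager invariant part, the restriction reduces into the full relation by inclusion, but you still need a Borel reduction of the full conjugacy relation into that generic part; ``one finally checks'' is not an argument, and no mechanism is proposed for moving an arbitrary pointed transitive system into the generic class while preserving and reflecting conjugacy.

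The paper avoids both difficulties by working with a different, auxiliary action: $\Aut((\hilbert)^\Z)$ (the shift-commuting homeomorphisms) acting on the Polish space of shift-transitive points of $((\hilbert)^\Z,\sigma)$. There, density of every orbit is automatic because the shift itself belongs to the acting group; meagerness of orbits follows from a Baire category argument (the comeager sets $H_0$ and $H_1$ of points whose coordinates along $(n_{2k})$, respectively $(n_{2k+1})$, approach $\bar{0}$, respectively $\bar{1}$, are disjoint from any set of points recurrent along a fixed sequence); and somewhere-density of local orbits follows from the $Z$-set extension theorem applied to finite subsets of $\hilbert$. Bireducibility is then witnessed by two concrete maps: $x\mapsto((\hilbert)^\Z,\sigma,x)$, legitimate since $(\hilbert)^\Z\cong\hilbert$, and $(\hilbert,\varphi,x)\mapsto(\varphi^{n_k}(x))_{k\in\Z}$ for a fixed sequence $(n_k)$ listing every finite sequence of positive integers on consecutive indices, hence containing arbitrarily long constant blocks; rigidity of the latter encoding comes from the elementary observation that any automorphism of the shift maps fixed points to fixed points, which recovers a homeomorphism $\eta$ of $\hilbert$ with $\eta(\varphi^j(x))=\psi^j(y)$ for all $j$. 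If you want to salvage your proposal, the realistic route is to drop both the conjugation-action upper bound and the $c_0$ lower bound and instead prove two-way reducibility with one explicitly analyzed auxiliary action of this kind.
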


    In particular, the conjugacy of pointed transitive Hilbert cube systems
    is not classifiable by countable structures, as opposed to most other relations considered in \cite{BV}. 
    


\section{Notation and preliminaries}
    
    In this paper, by a \textbf{system}, we mean a pair $(X,\varphi)$ where $X$ is a compact metric space and $\varphi$ is a homeomorphism of $X$. Given a point $x \in X$, its \textbf{orbit} is $\orbit(x) = \{\varphi^n(x) \colon n \in\mathbb{Z}\}$ and its \textbf{forward orbit} is $\{\varphi^n(x) \colon n \ge 0\}$. A point $x\in X$ is called \textbf{transitive} if the forward orbit of $x$ is dense in $X$. A system $(X,\varphi)$ is \textbf{transitive} if there is a transitive point in $X$.  If $X$ has no isolated points, then a system $(X,\varphi)$ is transitive if and only if there is a point in $X$ whose orbit is dense. A \textbf{pointed transitive system} $(X,\varphi,x)$ is a transitive system $(X,\varphi)$ together with a point $x$ whose forward orbit is dense. 
    A \textbf{subsystem} of a system $(X,\varphi)$ is a system $(Y,\psi)$ such that $Y$ is a nonempty closed set satisfying $\varphi(Y)=Y$ and $\psi$ is the restriction of $\varphi$ to $Y$. 
    
     Two systems $(X,\varphi)$ and $(Y,\psi)$ are  \textbf{conjugate} if there exists a homeomorphism $\rho\colon X\to Y$ such that $\rho\varphi=\psi\rho$. Two pointed systems $(X,\varphi,x)$ and $(Y,\psi,y)$ are  \textbf{conjugate} if they are conjugate by $\rho$ such that $\rho(x)=y$.

A \textbf{factor map} from $(X,\sigma)$ to $(Y,\tau)$ is a continuous surjection $\pi$ from $X$ to $Y$, such that $\pi  \sigma=\tau \pi$. 

     We say that a dynamical system $(Y,\tau)$ is \textbf{equicontinuous}  if the family of functions $\{\tau^n:n\in\mathbb{Z}\}$ is equicontinuous. Every system $(X,\varphi)$ admits the unique \textbf{maximal equicontinuous factor} that is there exists an equicontinuous system $(Y,\tau)$ and a factor map $\pi$ from $(X,\varphi)$ to $(Y,\tau)$  such that if $\theta$ is a factor from $(X,\varphi)$ to  an  equicontinuous system $(Z,\rho)$, then there is a factor map $\eta$ from $(Y,\tau)$ to $(Z,\rho)$ satisfying $\pi=\eta \theta$.

     For any compact space metric $X$,  the shift map $\sigma \colon X^\Z \to X^\Z$ is defined for $x=(x(n))_{n\in\Z} \in X^\Z$ as $\sigma(x)=(\sigma(x)(n))_{n\in\Z}$, where $\sigma(x)(n) = x(n+1)$ for all $n \in \Z$. We call the system $(X^\Z,\sigma)$ the \textbf{full shift over $X$}.

     Given $z\in X^\Z$ and $p\in\mathbb{N}$ define the \textbf{$p$-periodic part of $z$} as 
     \[{\rm Per}_{p}(z)=\{n\in \mathbb{Z}:\forall m\in\Z\text{ if } m\equiv n({\rm mod}\,p)\text{ then } z(m)=z(n)\}\] and let ${\rm Per}(z)=\bigcup_{p\in \mathbb{N}} {\rm Per}_p(z)$. Also, we write ${\rm Aper}(z)=\mathbb{Z}\setminus {\rm Per}(z)$ for the \textbf{aperiodic part of $z$}.  A sequence $z\in X^\Z$ is a \textbf{Toeplitz sequence}, if its aperiodic part ${\rm Aper}(z)$ is empty. A subsystem of the full shift over $X$ is a \textbf{Toeplitz system} if it is equal to the closure of the orbit of a Topelitz sequence $z\in X^\Z$.

   By a \textbf{symbolic system (over $A$)} or \textbf{shift space (over $A$)} we mean a subsystem of the full shift $(A^\Z,\sigma)$ where $A$ is a finite discrete space. In such a case, the set $A$ is referred to as the \textbf{alphabet}. 
By the classical result of Curtis, Hedlund, and Lyndon, any isomorphism $\varphi$ between shift spaces of $A^\Z$ for an alphabet $A$ is given by a block code \cite{hedlund}, which means that there exist $r\in\mathbb{N}$ and $f\colon A^{2r+1}\to A$ such that for every $k\in\mathbb{Z}$ and $x=(x(n))_{n\in\Z}\in A^\Z$ we have \[\varphi(x)(k)=f(x(k-r)\ldots x(k) \ldots x(k+r)).\] 

    For any alphabet $A$ we write $A^*$ for $\bigcup_{n=0}^\infty A^n$. We refer to the elements of $A^*$ as to \textbf{words}. If $w\in A^n$, then we refer to $n$ as to the \textbf{length} of $w$ and denote it by $|w|$. We use the convention that the empty word, denoted by $\epsilon$ is the unique word of length $0$.  We write $A^*_{\mathrm{even}}$ for the set of all words of even length and $A^*_{\mathrm{odd}}$ for the set of all words of odd length.
    
    For an interval $[a,b]\subseteq\mathbb{Z}$  and $z\in A^{\mathbb{Z}}$, by $z[a,b]$, we denote the sequence $(z(a),z(a+1), \dots, z(b))\in A^{b-a+1}$.  Similarly, we set $z[a,b)=(z(a),z(a+1), \dots, z(b-1))\in A^{b-a}$.  
    
    Given a symbolic system $X \subseteq A^\Z$, its \textbf{language} $\Lang(X)$ is the collection of 
 words in $A^*$ appearing in elements of $X$:
    \[
        \Lang(X) = \left\{ x[i, j] : x \in X, i,j\in\Z \textrm{ and } i \le j \right\}\cup\{\epsilon\}.
    \]

    For a symbolic system $(X,\sigma)$ over $A$ transitivity  is equivalent to 
    the statement that for all $u, v \in \Lang(X)$, there exists $w\in A^*$ such that 
    $uwv \in \Lang(X)$, see \cite[Section 3.7.2]{blanchard}.

    Let $E$ be a countable equivalence relation on the standard Borel space $X$. We say that $E $ is \textbf{hyperfinite} if it can be written as an increasing union of Borel equivalence relations with finite equivalence classes. An equivalence relation is hyperfinite if and only if it is induced by a Borel action of the group $\mathbb{Z}$ \cite[Theorem 5.1]{dougherty1994structure}. An equivalence relation $E$ is \textbf{treeable} \cite[Definition 3.1]{JKL} if there exists a Borel graph on the vertex set $X$ which is a forest and whose connected components are the equivalence classes of $E$. 

    The group $S_\infty$ is the group of all permutations of $\mathbb{N}$. An equivalence relation is \textbf{classifiable by countable structures} if it is Borel-reducible to an action of $S_\infty$, see \cite{hjorth2000classification}. The relation $=^+$ is the relation on $\mathbb{N}^{\mathbb{N}}$ defined by $(x_n)=^+(y_n)$ if $\{x_n:n\in\mathbb{N}\}=\{y_n:n\in\mathbb{N}\}$.

    Let $G$ be a Polish group acting on a Polish space $X$ in a Borel way. We denote by $E^X_G$ the induced equivalence relation.  Given $x\in X$ and open sets $U\subseteq X$ and $V\subseteq G$ with $x\in U$ and $1\in V$, the \textbf{local $U$-$V$-orbit} of $x$, denoted $\orbit(x,U,V)$, is the set of $y\in U$ for which there exist $l\in\mathbb{N}$ and $x = x_0, x_1, \ldots , x_l = y \in U$, and $g_0, \ldots , g_{l-1}\in V$ such that $x_{i+1} = g_i \cdot x_i$ for all $0\le i < l$. An action of $G$ on $X$ is \textbf{turbulent} \cite{hjorth2000classification} if every orbit is dense and meager and every local orbit is somewhere dense.    By the Hjorth turbulence theorem \cite[Corollary 3.19]{hjorth2000classification} the equivalence relation $E^G_X$ induced by a turbulent action is not classifiable by countable structures.

   Given a compact space $X$ we write $\homeo(X)$ for the group of homeomorphisms of $X$. The group $\homeo(X)$ is a Polish group with the compact-open topology. If $d$ is a metric on $X$, then the topology is induced by the uniform metric on $\homeo(X)$, also denoted by $d$ defined as $d(f,g)=\sup\{d(f(x),g(x)):x\in X\}$.  A subset $X\subseteq\hilbert$ is a \textbf{$Z$-set} (see \cite[Chapter 6.2]{vanmill}) if for every continuous function $f\colon\hilbert\to\hilbert$ and every $\varepsilon>0$ there exists a continuous function $g\colon \hilbert\to\hilbert\setminus X$ such that $d(f,g)<\varepsilon$. 

\section{Specification}\label{sec:specification}

Bowen introduced the specification property in \cite{Bowen71} to study  Axiom A diffeomorphisms. It is a strengthening (a uniform version) of transitivity. 
Informally, a dynamical system satisfies the specification property if for every $\varepsilon>0$ we can find $k$ such that given any collection of finite fragments of orbits (orbit segments), there exists a point which follows $\varepsilon$-closely these orbit segments and takes $k$ steps to switch between consecutive orbit segments. 

\begin{definition} Let $X$ be a compact metric space and $d$ be the metric on $X$.
    Given a map $\tau\colon X\to X$, an interval $[a,b)\subseteq\mathbb{N}$ with $0\leq a<b$, and $x\in X$ we write  $\tau^{[a,b)}(x)$ for the sequence  $(\tau^i(x))_{a\leq i <b}$ and call it the \textbf{orbit segment} (of $x$ over $[a,b)$).  Let $k$ be a natural number. A \textbf{$k$-spaced specification} is a sequence of $n\ge 2$ orbit segments $(\tau^{[a_i,b_i)}(x_i))_{1\leq i\leq n}$ such that $a_i-b_{i-1}\ge k$ for $2\leq i\leq n$. Let $\varepsilon>0$. A specification  $(\tau^{[a_i,b_i)}(x_i))_{1\leq i\leq n}$ is \textbf{$\varepsilon$-traced} if there exists $y\in X$ such that $d(\tau^j(x_i),\tau^j(y))\leq \varepsilon$ for $j\in [a_i,b_i)$, for every $1\leq i\leq n$. 
\end{definition}

\begin{definition}
    A system $(X,\tau)$ has the \textbf{specification property} if for every $\varepsilon>0$ there exists $k(\varepsilon)\in\mathbb{N}$ such that every $k(\varepsilon)$-spaced specification is $\varepsilon$-traced by a point from $X$.
\end{definition}

The specification property for a symbolic system $X$ can be restated in terms of the language $\Lang(X)$ in a similar fashion as transitivity.

\begin{proposition}\label{defspec}
Let $(X,\sigma)$ be a symbolic system. The following are equivalent.
\begin{enumerate}
    \item[(i)] The system $(X,\sigma)$ has the specification property.
    \item[(ii)] There exists $k\in\mathbb{N}$ such that for every $w,u\in\Lang(X)$ there exists $v\in\Lang(X)$ with $|v|=k$ such that $wvu\in\Lang(X)$.
\end{enumerate}
\end{proposition}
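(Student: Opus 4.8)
The plan is to translate both conditions into statements about agreement of bi-infinite sequences on central windows, using two standard features of a compatible metric $d$ on $A^{\Z}$: (a) for every $\varepsilon>0$ there is $N=N(\varepsilon)\in\mathbb{N}$ such that $y[-N,N]=y'[-N,N]$ implies $d(y,y')\le\varepsilon$; and (b) there is $\varepsilon_0>0$ such that $d(y,y')\le\varepsilon_0$ forces $y(0)=y'(0)$. Under (a)--(b), the requirement $d(\sigma^j(x_i),\sigma^j(y))\le\varepsilon$ for all $j\in[a_i,b_i)$ is implied by the single equality $y[a_i-N,\,b_i+N-1]=x_i[a_i-N,\,b_i+N-1]$, since the window $[j-N,j+N]$ around each $j\in[a_i,b_i)$ lies inside $[a_i-N,b_i+N-1]$; conversely $\varepsilon_0$-tracing forces coordinatewise agreement $y(j)=x_i(j)$ on $[a_i,b_i)$. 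This dictionary between orbit-segment tracing and occurrences of prescribed words at prescribed positions is the engine of both implications.

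For (i)$\Rightarrow$(ii) I would fix $\varepsilon_0$ as in (b), set $k=k(\varepsilon_0)$ from the specification property, and verify this $k$ works. Given $w,u\in\Lang(X)$ of lengths $p,q$, I would pick points $x_1,x_2\in X$ and shift them so that $x_1[0,p-1]=w$ and $x_2[p+k,\,p+k+q-1]=u$, and consider the two orbit segments $\sigma^{[0,p)}(x_1)$ and $\sigma^{[p+k,\,p+k+q)}(x_2)$. Their spacing is exactly $(p+k)-p=k$, so this is a $k$-spaced specification, hence $\varepsilon_0$-traced by some $y\in X$. By (b) the tracing forces $y[0,p-1]=w$ and $y[p+k,p+k+q-1]=u$, so setting $v=y[p,p+k-1]$ gives $wvu=y[0,p+k+q-1]\in\Lang(X)$ with $|v|=k$, as required.

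For (ii)$\Rightarrow$(i) the first step is to upgrade (ii) from filler length exactly $k$ to filler length exactly $k'$ for every $k'\ge k$: given $w$, I would right-extend it to $ws\in\Lang(X)$ with $|s|=k'-k$ (possible because every word of $\Lang(X)$ occurs in a bi-infinite point of $X$ and so extends on the right), apply (ii) to $ws$ and $u$ to obtain $v_0$ with $|v_0|=k$ and $wsv_0u\in\Lang(X)$, and take $v=sv_0$. Then, given $\varepsilon>0$, I would set $N=N(\varepsilon)$ and $k(\varepsilon)=k+2N$. For a $k(\varepsilon)$-spaced specification $(\sigma^{[a_i,b_i)}(x_i))_{1\le i\le n}$ I would read off the words $w_i=x_i[a_i-N,b_i+N-1]\in\Lang(X)$ at positions $[a_i-N,b_i+N-1]$; the gap before $w_i$ has length $g_i=a_i-b_{i-1}-2N\ge k$. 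Iterating the upgraded gluing with fillers of the exact lengths $g_i$ yields a single word $W=w_1 v^{(2)}w_2\cdots v^{(n)}w_n\in\Lang(X)$, and a point $y\in X$ containing $W$ can be positioned so that $w_1$ begins at $a_1-N$, which by the choice of the $g_i$ forces $w_i$ to occupy exactly $[a_i-N,b_i+N-1]$. By (a) this $y$ then $\varepsilon$-traces the specification.

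The main obstacle I anticipate is exactly the mismatch between the exact filler length $k$ demanded in (ii) and the at-least-$k(\varepsilon)$ spacing permitted in (i): gaps between consecutive windows can be any integer $\ge k$, so (ii) must first be boosted to arbitrary filler lengths $\ge k$, which is where right-extendability of $\Lang(X)$ enters. The remaining work is bookkeeping: choosing $N(\varepsilon)$ and $\varepsilon_0$ so that the window inclusion $[j-N,j+N]\subseteq[a_i-N,b_i+N-1]$ and the gap identity $g_i=a_i-b_{i-1}-2N$ align, and disposing of degenerate empty $w$ or $u$ directly (where an orbit segment would collapse) by simply reading off the $k$ consecutive letters preceding or following an occurrence of the nonempty word in some point of $X$.
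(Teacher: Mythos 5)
Your proposal is correct and follows essentially the same route as the paper: the same dictionary between $\varepsilon$-tracing and agreement of words on central windows, the same choice $k(\varepsilon)=k+2N$, the same windows $w_i=x_i[a_i-N,b_i+N)$, and the same iterated gluing. The single point of divergence is how the mismatch between exact filler length $k$ and gaps merely $\ge k(\varepsilon)$ is resolved: the paper enlarges the orbit segments (increasing the $b_i$'s without loss of generality so that every gap equals exactly $k(\varepsilon)$, whence all fillers have length exactly $k$), whereas you keep the segments fixed and first boost (ii) to fillers of any exact length $\ge k$ by right-extending words inside the language; both resolutions are valid, with the paper's WLOG being more economical and your lemma making the gap bookkeeping more explicit.
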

\begin{proof}
In the following, $d$ refers to the standard metric on $A^\Z$ as defined e.g., in \cite[Page 3]{bruin2022topological}. 

\noindent (i)$\Rightarrow$(ii) Put $\varepsilon=\frac{1}{2}$ and find $k=k(\varepsilon)$ using the specification property. Let $w,u\in\Lang(X)$. Set $|w|=l_w$ and $|u|=l_u$. There are $x_1,x_2\in X$ containing $w$ and $u$ as subwords. Without loss of generality assume $x_1[0,l_w)=w$ and $x_2[l_w+k,l_w+l_u+k)=u$. Then the specification formed by the orbit segments $\sigma^{[0,l_w)}(x_1)$ and $\sigma^{[l_w+k,l_w+l_u+k)}(x_2)$ is $k$-spaced, so it is $\varepsilon$-traced by some $y\in X$. Now by the definition of $d$ 
we have $w=y[0,l_w)$ and $u=y[l_w+k,l_w+l_u+k)$. Put $v=y[l_w,l_w+k)$ to see that $wvu\in\Lang(X)$.  

\noindent (ii)$\Rightarrow$(i) Let $k$ be provided by (ii). Fix $\varepsilon>0$ and take $n$ such that $2^{-n-1}<\varepsilon$. We claim that $k(\varepsilon)=k+2n$ witnesses the specification property. Suppose $\left(\sigma^{[a_1,b_1)}(x_1), \ldots, \sigma^{[a_m,b_m)}(x_m)\right)$ is an $k(\varepsilon)$-spaced specification. Without loss of generality increase  $b_i$'s if necessary) we assume that $k(\varepsilon)=a_{i}-b_{i-1}$ for every  $2\le i \le m$. Write $w_i=x_i[a_i-n,b_i+n)$ for $1\le i\le m$. Applying our assumption $i-1$ times we see that there exist $v_i$ for $2\le i\le m$ of length $k$ such that $u=w_1v_2w_2\ldots v_mw_m\in \Lang(X)$. Choose $y\in X$ that contains $u$. By shifting $y$ if necessary, we can assume that $y[a_1-n,b_m+n)=u$. We easily see  that the specification $\left(\sigma^{[a_1,b_1)}(x_1),\ldots, \sigma^{[a_m,b_m)}(x_m)\right)$ is $\varepsilon$-traced by $y$.
\end{proof}

It should be noted that, in the literature, some authors also consider other notions of specification, weaker than that introduced by Bowen. For example, sometimes one may require the existence of $k\in\mathbb{N}$ such that for every $w,u\in\Lang(X)$ there exists $v\in\Lang(X)$ with $|v|\leq k$ such that $wvu\in\Lang(X)$. For the whole panorama of specification-like properties of dynamical systems see the survey \cite{kwietniak2016panorama}.

\section{A class of systems with the specification property}

In this section, we construct families of symbolic systems that will allow us to provide lower bounds for the complexity of conjugacy problem for systems with the specification property.

Assume that $\Ag$ is a finite alphabet containing a distinguished symbol $\gwiazdka$ and $\A\neq\emptyset$.
Given a set $R\subseteq\Ao$, we 
define
\[
\fr=\{\gwiazdka w \gwiazdka : w\in\Ao \text{ and } w\notin R\}.
\]
Now, for $R\subseteq\Ao$ we define the shift space 
\[X(R)=\{x\in A^\Z:\text{no word from $\fr$ appears in }x\} \subseteq A^{\mathbb{Z}}.
\]
In other words, $w\in\Ao$ can appear between two consecutive occurrences of $\gwiazdka$ in $x\in \xr$ if and only if $w\in R$. Note that any $v\in\Ae$ is allowed between two consecutive $\gwiazdka$'s in $x\in \xr$. In particular, $\gwiazdka^l$ is an allowed word for every $l\ge 1$. It follows that every word over $\A$ is allowed in $\xr$ so we always have $(\A)^\Z\subseteq \xr$. Hence, $\xr\neq\emptyset$ for every $R\subseteq\Ao$.
Let 
 \[
 \SA = \{ \xr\subseteq\Ag^{\mathbb{Z}} : R\subseteq\Ao \}.
 \]
The space $\SA$ consists of nonempty closed subsets of $\Ag^\Z$, so it is naturally endowed with the Vietoris topology. The powerset of ${\Ao}$, identified with $2^{\Ao}$, also has the natural compact metric topology and 
the function $2^{\Ao}\ni R\mapsto \xr\in\SA$ is continuous, hence $\SA$ is a compact metric space. 
 \begin{lemma}\label{lem:top-on-S}
  The function $2^{\Ao}\ni R\mapsto \xr\in\SA$ is 1-1. 
 \end{lemma}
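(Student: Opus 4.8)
The plan is to prove injectivity by contraposition: assuming $R\neq R'$, I will exhibit a single bi-infinite sequence that lies in exactly one of $\xr$ and $X(R')$, which already forces these two shift spaces to differ. After swapping $R$ and $R'$ if necessary, fix a word $w\in R\setminus R'$; since $R\subseteq\Ao$, this $w$ is an odd-length word over $\A$. The guiding idea is that the pattern $\gwiazdka w\gwiazdka$ is permitted in $\xr$ but forbidden in $X(R')$: indeed $w\in R$ gives $\gwiazdka w\gwiazdka\notin\fr$, whereas $w\notin R'$ gives $\gwiazdka w\gwiazdka\in F(R')$.

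Next I would build a witnessing point. Pick any letter $a\in\A$, which exists because $\A\neq\emptyset$, and let $x\in\Ag^\Z$ be the sequence that reads $\gwiazdka w\gwiazdka$ on a single window, say on $[0,|w|+2)$, and equals $a$ at every other coordinate. By construction $\gwiazdka w\gwiazdka$ occurs in $x$. One half of the verification is immediate: since $x$ contains the word $\gwiazdka w\gwiazdka\in F(R')$, we have $x\notin X(R')$.

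The crux, and essentially the only step requiring care, is checking that $x\in\xr$, i.e.\ that no word of $\fr$ occurs in $x$. Here I would use the structural observation that every element of $\fr$ has the form $\gwiazdka u\gwiazdka$ with $u$ over $\A$, and therefore contains exactly two occurrences of $\gwiazdka$, located at its two endpoints with none in between. But $x$ contains only two occurrences of $\gwiazdka$ in total, namely the two delimiters of the window. Consequently the only subword of $x$ that begins and ends with $\gwiazdka$ and has no $\gwiazdka$ strictly inside is $\gwiazdka w\gwiazdka$ itself; and since $w\in R$ we have $\gwiazdka w\gwiazdka\notin\fr$. Every other subword of $x$ contains at most one $\gwiazdka$ and hence cannot equal a word of $\fr$. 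Thus no forbidden word appears, so $x\in\xr$. Having produced a point of $\xr\setminus X(R')$, we conclude $\xr\neq X(R')$, which shows the map $R\mapsto\xr$ is injective. I expect the main (and essentially only) obstacle to be making this counting-of-$\gwiazdka$'s argument airtight: one must confirm that padding the window with the single letter $a$ creates no additional pair of $\gwiazdka$'s and hence no accidental occurrence of a forbidden pattern.
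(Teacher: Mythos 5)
Your proof is correct and takes essentially the same approach as the paper: both arguments fix $w\in R_1\setminus R_2$ and exhibit an explicit bi-infinite sequence containing $\gwiazdka w\gwiazdka$ that lies in $X(R_1)\setminus X(R_2)$, verified by noting that $\gwiazdka w\gwiazdka$ is the only $\gwiazdka$-delimited word occurring. The only cosmetic difference is the witness: the paper uses the periodic sequence $\dots\gwiazdka w\gwiazdka w\gwiazdka\dots$, while you pad a single occurrence of $\gwiazdka w\gwiazdka$ with a letter $a\in\A$.
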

 \begin{proof} 
Suppose that $R_1 \neq R_2$. Without loss of generality, find $w \in R_1 \setminus R_2$. Then
        $$
            \dots \gwiazdka w \gwiazdka w \gwiazdka \dots \in X(R_1) \setminus X(R_2),
        $$
    thus $ X(R_1)\not= X(R_2)$.
 \end{proof}

Our starting point is the following.

\begin{proposition}\label{specyfikacja0} 
Every symbolic system in $\SA$ has the specification property.
\end{proposition}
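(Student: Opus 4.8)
The plan is to verify condition (ii) of Proposition \ref{defspec}, which is equivalent to the specification property for symbolic systems. So I will produce a single $k\in\mathbb{N}$ such that for arbitrary $w,u\in\Lang(\xr)$ there is a gluing word $v$ with $|v|=k$ and $wvu\in\Lang(\xr)$. The crucial structural remark, which I would isolate first, is that the forbidden words defining $\xr$ all have the shape $\gwiazdka z\gwiazdka$ with $z\in\Ao\setminus R$; consequently a word lies in $\Lang(\xr)$ if and only if every maximal block of non-$\gwiazdka$ letters \emph{enclosed} between two consecutive occurrences of $\gwiazdka$ is either of even length or belongs to $R$. In particular an enclosed block that is empty or of even length is admissible for \emph{every} choice of $R$, and this is precisely what I will exploit, making the whole argument uniform in $R$.

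Fix a letter $a\in\A$, which exists since $\A\neq\emptyset$, and set $k=3$. Given $w,u\in\Lang(\xr)$, let $s_w$ be the suffix of $w$ following its last occurrence of $\gwiazdka$ (with $s_w=w$ if $w$ contains no $\gwiazdka$), and let $p_u$ be the prefix of $u$ preceding its first occurrence of $\gwiazdka$ (with $p_u=u$ if $u$ contains no $\gwiazdka$); note $s_w,p_u\in(\A)^*$. I would then choose $\delta_1,\delta_2\in\{0,1\}$ with $\delta_1\equiv|s_w|$ and $\delta_2\equiv|p_u|\pmod 2$ and set
\[
v=a^{\delta_1}\,\gwiazdka^{\,k-\delta_1-\delta_2}\,a^{\delta_2}.
\]
Since $\delta_1+\delta_2\le 2\le k$, the exponent $k-\delta_1-\delta_2\ge 1$, so $v$ contains at least one $\gwiazdka$, and clearly $|v|=k$.

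It then remains to check that $wvu$ contains no forbidden word, which I would do by listing the blocks enclosed between consecutive $\gwiazdka$'s of $wvu$. Those lying entirely inside $w$ or inside $u$ are admissible because $w,u\in\Lang(\xr)$. The only newly created enclosed blocks are: the block between the last $\gwiazdka$ of $w$ and the first $\gwiazdka$ of $v$, equal to $s_w a^{\delta_1}$, of length $|s_w|+\delta_1\equiv 0\pmod 2$; the block between the last $\gwiazdka$ of $v$ and the first $\gwiazdka$ of $u$, equal to $a^{\delta_2}p_u$, of length $|p_u|+\delta_2\equiv 0\pmod 2$; and the empty blocks between the consecutive $\gwiazdka$'s inside $\gwiazdka^{\,k-\delta_1-\delta_2}$. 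All of these are empty or of even length, hence admissible for every $R$. Moreover, when $w$ (respectively $u$) contains no $\gwiazdka$, the string $wa^{\delta_1}$ (respectively $a^{\delta_2}u$) is merely a prefix (respectively suffix) of $wvu$, not a block enclosed between two $\gwiazdka$'s, so it imposes no constraint. Thus $wvu\in\Lang(\xr)$, and since $v$ is a subword of $wvu$ we also get $v\in\Lang(\xr)$.

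The one genuine obstacle is the parity phenomenon: naively gluing $w$ and $u$ with a block of $\gwiazdka$'s can freshly enclose the odd-length blocks $s_w$ or $p_u$, which need not belong to $R$ and would then be forbidden. The single padding letters $a^{\delta_1},a^{\delta_2}$ are inserted exactly to correct the parities of these junction blocks, while the surrounding run of $\gwiazdka$'s both seals the two ends and absorbs the length fluctuation coming from $\delta_1,\delta_2$, keeping $|v|$ equal to the fixed value $k$. Everything else is routine bookkeeping over the at most three newly created junction blocks.
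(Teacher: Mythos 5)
Your proof is correct and takes essentially the same approach as the paper's: both verify condition (ii) of Proposition \ref{defspec} with a constant independent of $R$, using a letter $a\in\A$ to correct the parity of the junction blocks at the two seams and $\gwiazdka$'s to seal them, the key point being that even-length (or empty) blocks enclosed between consecutive $\gwiazdka$'s are admissible for every $R$. The paper is slightly more economical, taking $k=2$ with $v=v'v''$ where each of $v',v''$ is $a$ or $\gwiazdka$ according to the same parity criterion (so when both junction parities are odd, $v=aa$ and the two seams merge into a single even enclosed block), but this difference is cosmetic.
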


\begin{proof}
Fix $R\subseteq\Ao$. We claim that that for every words $w,u\in \Lang(\xr)$ there exists $v$ with $|v|=2$ 
such that $wvu\in \Lang(\xr)$. 
Then $\xr$ has the specification property by  Proposition \ref{defspec}. If $w$ ends with $\gwiazdka w'$ where $w'\in \Ao$, then we set $v'= a$ where $a\in\A$. Otherwise we take $v'= \gwiazdka$. Similarly, we set $v''=a$, when $u$ begins  with $u'\gwiazdka $ where $u'\in \Ao$. Otherwise we take $v''= \gwiazdka$. Taking $v=v'v''$, we easily see that $wvu\in\Lang(\xr)$. 
\end{proof}

In the next sections, we will use the above idea to show that  the conjugacy problem for systems with specification is highly nontrivial.  However, in order to work over over the alphabet $\{0,1\}$ we will need a more subtle version of Proposition \ref{specyfikacja0}.

To work over the alphabet $\{0,1\}$  we replace the alphabet $A$ with a finite nonempty set $\code$ that consists of nonempty words over $\{0,1\}$, that is $\code\subseteq \{0,1\}^*\setminus\{\epsilon\}$. We call $\code$ a \textbf{code} and we refer to the elements of $\code$ as to \textbf{blocks}. 
We  identify words or sequences obtained by concatenating elements of $\code$ with the corresponding words over $\{0,1\}$ as follows. Given a word $w= w(0)\ldots w(l-1)$ over the code $\code$ we write $w_\flat$ for the word over $\{0,1\}$ obtained by concatenating $w(0),\ldots, w(l-1)$.  
We write $\evenb$ for the set of all words over $\{0,1\}$ that can be written as $w_\flat$, where $w$ is a concatenation of an even  number of blocks from $\code$, that is $w\in\ce$. We write $\oddb$ for the collection of all $w_\flat$, where $w$ is a concatenation of odd number of blocks over $\code$, that is $w\in\co$. Since the empty word $\epsilon$ has length zero, we have $\epsilon\in\evenb$.  Note that the length of $w\in\evenb$ need not to be even. 

Given a bi-infinite sequence $x=(x(k))_{k\in\Z}$ with entries in a code $\code$ we write $x_\flat$ for the element of $\{0,1\}^\mathbb{Z}$ obtained by concatenating the words appearing in $x$ so that the first letter of $x(0)$ appears at the position $0$ in $x_\flat$. 
The collection of all shifts of bi-infinite sequences $x_\flat$ where $x\in\code^\Z$ is a symbolic subsystem of $\{0,1\}^\Z$, which will be denoted by $\sspace$. That is,
\[
  \sspace=
  \{
    \sigma^k(x_\flat): k\in\Z\text{ and }x\in\code^\Z
  \} 
  \subseteq \{0,1\}^\Z.
\]
Given a subsystem $X\subseteq \code^\Z$ we write $$X_\flat=\{\sigma^k(x_\flat):k\in\Z\mbox{ and }x\in X\}$$ and note that $X_\flat$ is a subsystem of $\sspace\subseteq\{0,1\}^\Z$.

Now, given a code $\code$, treating $\code$ as the alphabet in the definition of $\mathcal{S}(\code)$, we define \[
\mathcal{S}(\code)_\flat=\{X_\flat: X\in\mathcal{S}(\code)\}.
\]

\begin{definition}
    We say that a finite code $\code\subseteq \{0,1\}^*$ is \textbf{recognizable}\footnote{We follows the terminology given in \cite{dp}, although in the literature this property is also known as \textit{unique decomposeability},  see \cite[p. 4718]{Pavlov} or \emph{strong code/unambiguously coded}, see \cite[{\S}6]{bpr} or \emph{uniquely representable}, see \cite{bdwy}. } if for every $x\in\code^\Z$ and every $0\le k<|x(0)|$ there is only one way 
    to decompose $\sigma^k(x_\flat)$ as a concatenation of blocks from $\code$, that is, if $x,y\in\code^\Z$ and $x_\flat=\sigma^k(y_\flat)$ for some $0\le k<|y(0)|$, then $k=0$ and $x=y$.
\end{definition}

Note that if a code $\code$ is recognizable, then every 
sufficiently long word  $w\in\Lang(\sspace)$ can be in a unique way decomposed into blocks in $\code$ meaning that
\[
w=(pu_1u_2\ldots u_k s)_\flat,
\]
where $u_1,\ldots,u_k\in\code$ and $p,s\notin \code$, but $p$ is a proper suffix of a block in $\code$ and $s$ is a proper prefix of a block in $\code$. In particular,  $|p|,|s| < \max\{|w|:w\in\code\}$. 

For every $R\subseteq \codenohashodd$ we write $$\fr_\flat=\{(\gwiazdka w\gwiazdka)_\flat:w\in\codenohashodd \text{ and } w\notin R\}.$$  If $\code$ is a recognizable code, then 
 \[   \xr_\flat=\{x\in\sspace:\mbox{ no word from }\fr_\flat\mbox{ appears in }x\}.\]

In analogy with Lemma \ref{lem:top-on-S} we have the following result. It gives us the topology on $\classflat$. 
\begin{lemma}\label{lem:top-on-Sb}
    For any recognizable code $\code$ the function $\SB\ni X\mapsto X_\flat\in\classflat$ is 1-1.
    \end{lemma}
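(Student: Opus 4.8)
The plan is to show that flattening can be inverted canonically on $\SB$, so that each $X \in \SB$ is recovered from $X_\flat$ by the explicit formula
\[
X = \{x \in \code^\Z : x_\flat \in X_\flat\}.
\]
Granting this, injectivity follows at once: if $X,Y \in \SB$ satisfy $X_\flat = Y_\flat$, then $X = \{x \in \code^\Z : x_\flat \in X_\flat\} = \{x \in \code^\Z : x_\flat \in Y_\flat\} = Y$. Thus the whole lemma reduces to verifying the displayed formula, and this is exactly where recognizability enters. I would first record that every $X \in \SB$ is, by construction, a closed shift-invariant subset of $\code^\Z$ (it is defined by forbidding a set of words over the alphabet $\code$), since this shift-invariance is used crucially below.

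The inclusion $X \subseteq \{x : x_\flat \in X_\flat\}$ is immediate, because $x \in X$ gives $x_\flat = \sigma^0(x_\flat) \in X_\flat$. For the reverse inclusion I would take $x \in \code^\Z$ with $x_\flat \in X_\flat$ and unwind the definition of $X_\flat$ to write $x_\flat = \sigma^k(y_\flat)$ for some $y \in X$ and $k \in \Z$. The position $k$ in $y_\flat$ falls inside a unique block $y(m)$, at offset $k' = k - L \in [0, |y(m)|)$, where $L$ is the cumulative length of the blocks preceding $y(m)$. Using the identity $\sigma^L(y_\flat) = (\sigma^m y)_\flat$ and setting $y' = \sigma^m y$, we obtain $x_\flat = \sigma^{k'}(y'_\flat)$ with $y' \in X$ (by shift-invariance) and $0 \le k' < |y'(0)|$. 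Recognizability now applies directly to the pair $x, y'$: from $x_\flat = \sigma^{k'}(y'_\flat)$ with $0 \le k' < |y'(0)|$ it yields $k' = 0$ and $x = y'$, whence $x = y' \in X$. This establishes the formula, and hence the lemma.

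The main obstacle is the normalization step in the reverse inclusion, namely reducing an arbitrary shift $\sigma^k(y_\flat)$ to the special form $\sigma^{k'}(y'_\flat)$ with $0 \le k' < |y'(0)|$ to which the recognizability hypothesis literally applies. This is purely bookkeeping about how the cumulative block lengths index positions in $y_\flat$, together with the identity $\sigma^L(y_\flat) = (\sigma^m y)_\flat$; once it is set up correctly, recognizability does the rest. No compactness or metric input is needed beyond shift-invariance of the members of $\SB$, so the argument is essentially combinatorial, and it parallels Lemma \ref{lem:top-on-S} in that the recovery formula plays the role there played by the explicit separating point.
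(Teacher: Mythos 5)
Your proof is correct, but it is organized differently from the paper's. The paper first invokes Lemma \ref{lem:top-on-S} to reduce injectivity of $X\mapsto X_\flat$ on $\SB$ to the statement that $R_1\neq R_2$ implies $X(R_1)_\flat\neq X(R_2)_\flat$, and then exhibits an explicit separating point: for $w\in R_1\setminus R_2$, the flattened periodic point $(\dots\gwiazdka w\gwiazdka w\gwiazdka\dots)_\flat$ lies in $X(R_1)_\flat$ but, by recognizability, not in $X(R_2)_\flat$. You instead prove the recovery formula $X=\{x\in\code^\Z : x_\flat\in X_\flat\}$, i.e., you construct an explicit left inverse of the flattening map; notably, your normalization-plus-recognizability argument is precisely the justification that the paper compresses into the words ``by recognizability'' (to see that the separating point is not in $X(R_2)_\flat$ one must carry out exactly your reduction of an arbitrary $\sigma^k(y_\flat)$ to the normal form $\sigma^{k'}(y'_\flat)$ with $0\le k'<|y'(0)|$, to which the definition of recognizability literally applies). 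Your route buys more generality: since you use nothing about $X$ beyond shift-invariance, you obtain that flattening is injective on all subshifts of $\code^\Z$, not merely those of the form $\xr$, and you bypass the parametrization by $R$ entirely. The paper's route is shorter given that Lemma \ref{lem:top-on-S} is already in place, but it leaves the key recognizability step implicit, which your write-up makes rigorous.
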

\begin{proof}
    By Lemma \ref{lem:top-on-S} it is enough to note that if $R_1\not=R_2$, then $X(R_1)_\flat\not=X(R_2)_\flat$. Without loss of generality, find $w \in R_1 \setminus R_2$. Then by recognizability we have
        $$
            (\dots \gwiazdka w \gwiazdka w \gwiazdka \dots)_\flat \in X(R_1)_\flat \setminus X(R_2)_\flat.
        $$
\end{proof}

\begin{definition}
    Let $\code\subseteq\{0,1\}^*$ be a code and let $\varphi\in\Aut(\code^\Z)$. We say that
$\varphi$ is \textbf{block-length-preserving} if for every $x\in\code^\Z$ we have $|x(0)|=|\varphi(x)(0)|$.
\end{definition}

Note that if $\varphi\in\Aut(\code^\Z)$ is block-length-preserving, then for every $x\in\code^\Z$ and $k\in\Z$ we have $|x(k)|=|\varphi(x)(k)|$.

\begin{lemma}
Let $\code\subseteq\{0,1\}^*$ be a recognizable code and $\varphi\in\Aut(\code^\Z)$ 
be block-length-preserving. Then there exists unique $\varphi_\flat\in\Aut(\sspace)$ such that for every $x\in\code^\Z$ we have
\begin{equation}\label{eq:phi_flat}
\varphi(x)_\flat=\varphi_\flat(x_\flat).
\end{equation}
    \end{lemma}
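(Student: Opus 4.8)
The plan is to write down the only formula for $\varphi_\flat$ that is consistent with \eqref{eq:phi_flat} and shift-equivariance, check that it is well defined, and then verify that the resulting map is a shift-commuting self-homeomorphism of $\sspace$.

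Every element of $\sspace$ has the form $\sigma^k(x_\flat)$ for some $x\in\code^\Z$ and $k\in\Z$, so I would set $\varphi_\flat(\sigma^k(x_\flat))=\sigma^k(\varphi(x)_\flat)$. This formula is in fact forced: any $\psi\in\Aut(\sspace)$ satisfying \eqref{eq:phi_flat} commutes with $\sigma$ and agrees with the right-hand side on every $x_\flat$, so $\psi(\sigma^k(x_\flat))=\sigma^k\psi(x_\flat)=\sigma^k(\varphi(x)_\flat)$ for all $k$. This gives uniqueness at once and identifies the only candidate for $\varphi_\flat$.

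The main point is well-definedness. Suppose $\sigma^{k_1}(x_\flat)=\sigma^{k_2}(y_\flat)$, equivalently $x_\flat=\sigma^m(y_\flat)$ with $m=k_2-k_1$. Locating position $m$ of $y_\flat$ inside the block $y(j)$, say $m=\sum_{i=0}^{j-1}|y(i)|+k'$ with $0\le k'<|y(j)|$, we obtain $x_\flat=\sigma^{k'}\big((\sigma^j y)_\flat\big)$, where $\sigma^j$ denotes the shift on $\code^\Z$. Recognizability then forces $k'=0$ and $x=\sigma^j y$; in particular $m=\sum_{i=0}^{j-1}|y(i)|$ is a cut point of $y_\flat$. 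Since $\varphi$ commutes with the shift on $\code^\Z$, we get $\varphi(x)=\sigma^j\varphi(y)$, and because $\varphi$ is block-length-preserving, $\sum_{i=0}^{j-1}|\varphi(y)(i)|=\sum_{i=0}^{j-1}|y(i)|=m$. Hence $\varphi(x)_\flat=\big(\sigma^j\varphi(y)\big)_\flat=\sigma^m(\varphi(y)_\flat)$, so $\sigma^{k_1}(\varphi(x)_\flat)=\sigma^{k_2}(\varphi(y)_\flat)$, as required. I expect this to be the crux of the proof, and it is precisely where both recognizability and the block-length-preserving hypothesis are needed; the case $m<0$ is symmetric, with $j<0$.

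It remains to see that $\varphi_\flat\in\Aut(\sspace)$. It commutes with $\sigma$ directly from the formula, and it is a bijection with inverse $(\varphi^{-1})_\flat$, noting that $\varphi^{-1}$ is again a block-length-preserving automorphism (from $|\varphi(z)(0)|=|z(0)|$ with $z=\varphi^{-1}(x)$). For continuity I would exhibit $\varphi_\flat$ as a sliding block code. By the Curtis--Hedlund--Lyndon theorem \cite{hedlund}, $\varphi$ is given by a local rule on $\code^\Z$ of some radius $r$, and it preserves block lengths. Given $z\in\sspace$, recognizability, in the form of the unique-decomposition remark preceding Lemma \ref{lem:top-on-Sb}, guarantees that a window of $z$ around position $0$ of size bounded uniformly in $z$ determines the offset of $0$ within the block covering it together with the $2r+1$ blocks surrounding that one; feeding these blocks through the local rule of $\varphi$ and reading off the appropriate coordinate of the flattening shows that $\varphi_\flat(z)(0)$ depends only on a bounded window of $z$. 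By shift-equivariance the same holds at every coordinate, so $\varphi_\flat$ is a sliding block code and in particular continuous. A continuous shift-commuting bijection of the compact subshift $\sspace$ is an element of $\Aut(\sspace)$, which completes the argument.
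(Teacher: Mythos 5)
Your proposal is correct, and it builds $\varphi_\flat$ from the same forced formula as the paper; the difference is in how the verification is organized. The paper normalizes the representation first: every $y\in\sspace$ is written (uniquely, by recognizability) as $\sigma^k(x_\flat)$ with $0\le k<|x(0)|$, so well-definedness is automatic, and the real work goes into checking that the resulting map commutes with $\sigma$, done by a two-case computation ($k+1<|x(0)|$ versus $k+1=|x(0)|$) in which block-length preservation enters. You do the dual: by allowing arbitrary $k\in\Z$ your formula is shift-equivariant on its face, and the work migrates into well-definedness --- your cut-point argument, where recognizability forces $x=\sigma^j y$ and length preservation guarantees that the cut point $m$ of $y_\flat$ is also a cut point of $\varphi(y)_\flat$. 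The two computations rest on exactly the same two hypotheses and are of comparable difficulty, so this is a reorganization rather than a new idea. Beyond that, your write-up is somewhat more complete than the paper's: you obtain bijectivity cleanly from $(\varphi^{-1})_\flat$ after observing that $\varphi^{-1}$ is again block-length-preserving, where the paper only remarks that injectivity follows from recognizability and surjectivity from that of $\varphi$; and your continuity argument, exhibiting $\varphi_\flat$ as a sliding block code via the Curtis--Hedlund--Lyndon local rule of $\varphi$ together with the unique-decomposition remark, fleshes out what the paper dismisses as routine.
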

\begin{proof} Given $y\in \sspace$ write  
$y=\sigma^k(x_\flat)$ where $x\in\code^\Z$ and $0\le k<|x(0)|$. The only automorphism  $\varphi_\flat$ satisfying \eqref{eq:phi_flat} has to be of the form
\[
\varphi_\flat(y)=\sigma^k((\varphi(x))_\flat).
\]
It is routine to check that $\varphi_\flat$ is continuous.
If $k+1<|x(0)|$, then
\[
\varphi_\flat(\sigma(y))=\sigma^{k+1}((\varphi(x))_\flat)=\sigma(\sigma^{k}((\varphi(x))_\flat))=\sigma(\varphi_\flat(y)).
\]
Otherwise $k+1=|x(0)|$, so
\[
\varphi_\flat(\sigma(y))=(\varphi(\sigma(x)))_\flat=\sigma^{|x(0)|}((\varphi(x))_\flat)=\sigma(\varphi_\flat(y)).
\]
Above, we used that $|\varphi(x)(0)_\flat|=|x(0)_\flat|$.
Injectivity of $\varphi_\flat$ follows from recognisability. Surjectivity of $\varphi_\flat$ is a consequence of surjectivity of $\varphi$.
\end{proof}

Given a recognizable code $\code$ we will consider the symbolic systems in the class $\SAb$. However, in general the systems in the class $\SAb$ may not have the specification property despite Proposition \ref{specyfikacja0}. To see that, first note that the system consisting of a single finite periodic orbit of length grater than $1$ does not have the specification property. This is easily seen using Proposition \ref{defspec}. Indeed, if the length of the orbit is $q$, then the system has realization as a symbolic subsystem $X$ of $\{0,1,\ldots,q-1\}^\Z$ consisting of the orbit of $(\ldots,0,1,2,\ldots,q-1,0,1,2,\ldots,q-1,\ldots$) and if $u,v,w\in\Lang(X)$ are such that $uvw\in\Lang(X)$, then $|v|$ depends on the last digit of $u$ and the first digit of $w$.   Next, note that a factor of a system with the specification property also has the specification property. Therefore, a system with specification cannot have a finite orbit as a factor. Now, if $\code$ is a finite recognizable code such that the lengths of all blocks in $\code$ are divisible by $q\geq 2$, then the system $\sspace\in\SAb$ factors onto the finite orbit of length $q$. Indeed, note that for every $y\in \sspace$ there is $0\le l<\max\{|w|:w\in\Ag\}$
such that $\sigma^l(y)\in(\code^\Z)_\flat$ and taking $\theta(y)=l\mod q$ defines a factor map and therefore the system $\sspace\in\SAb$ thus does not have the specification property (it only has the  specification property relative to the periodic factor, see \cite[Definition 37]{kwietniak2016panorama} and \cite[Definition 3.1 \& Theorem 3.6]{jung}.) 

Thus, in order to prove that every symbolic system in $\SAb$ has the specification property, we need to assume that $\code$ is a finite and recognizable code such that there are at least two relatively prime numbers among the lengths of the words in $\code$. In fact, to simplify our reasoning, we will work with a finite code $\code$ with a distinguished symbol $\gwiazdka\in\code$ of odd length  $r$ and we will assume that all blocks in $\codenohash$ have the same length $q$ that is relatively prime with $r$. 

\begin{definition}
    We say that a finite code $\code\subseteq \{0,1\}^*$ is \textbf{admissible} if $\code$ is recognizable and contains a distinguished element $\gwiazdka\in \code$ such that $|\gwiazdka|$ is odd,     $\codenohash\not=\emptyset$ and every element of $\codenohash$ has the same length, which is relatively prime with $|\gwiazdka|$.
\end{definition}


\begin{theorem}\label{specyfikacja} 
If $\code$ is an admissible code, then every symbolic system in $\SB_\flat$ has the specification property.
\end{theorem}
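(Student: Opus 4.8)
The plan is to verify condition (ii) of Proposition \ref{defspec} directly for each system $\xrb\in\SB_\flat$, producing a single gluing length $k$ (depending only on $\code$) that works uniformly in $R$. Write $r=|\gwiazdka|$ for the odd length of the star block, $q$ for the common length of the blocks in $\codenohash$, and $L=\max\{|b|:b\in\code\}$; recall $\gcd(q,r)=1$. Since $r$ is odd this upgrades to $\gcd(2q,r)=1$, which will be the arithmetic engine of the argument.

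First I would reduce everything to the block level. Given $w,u\in\Lang(\xrb)$, each appears inside $x_\flat$ for some $x\in\xr$, so by recognizability I can complete $w$ on the right to the next block boundary by appending at most $L-1$ symbols (call this number $\alpha$), and complete $u$ on the left to the previous block boundary by prepending at most $L-1$ symbols (call it $\beta$); both extensions still lie in $\Lang(\xrb)$. After this, whatever sits between the two new boundaries must be a concatenation $C_\flat$ of full blocks from $\code$, and a word $w\,v\,u$ with $v=(\text{right completion})\,C_\flat\,(\text{left completion})$ lies in $\Lang(\xrb)$ precisely when the block word obtained by splicing the blocks of $x$ up to the junction, then $C$, then the blocks of the point carrying $u$, avoids every forbidden pattern $\gwiazdka z\gwiazdka$ with $z\in\codenohashodd\setminus R$. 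Since $|v|=\alpha+|C_\flat|+\beta$, forcing $|v|=k$ amounts to realizing $|C_\flat|=k-\alpha-\beta=:t$, and as $\alpha,\beta\in[0,L)$ the target $t$ ranges over the interval $[k-2L+2,\,k]$.

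The construction of $C$ is the heart of the matter, and the main obstacle is hitting the exact length $t$ while keeping the spliced block word legal. Legality can fail only through newly created patterns $\gwiazdka z\gwiazdka$, i.e. two stars separated by an odd number of non-star blocks, and all such patterns occur across the junction or inside $C$. I would therefore take $C$ of the shape $(\text{non-star})^{p}\,\gwiazdka^{\,b}\,(\text{non-star})^{s}$ and pick the parities $p\equiv\mu$ and $s\equiv\nu\pmod 2$, where $\mu$ (resp. $\nu$) is the length of the non-star run immediately to the left (resp. right) of the bridge in the ambient point; this makes the non-star run between any two consecutive stars of the spliced word even, hence allowed for every $R$ (and when $w$ or $u$ carries no star the corresponding constraint simply disappears, an easier sub-case). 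Consecutive stars contribute an empty, hence even, run, so $\gwiazdka^{\,b}$ is harmless. Then $|C_\flat|=(p+s)q+br$, and writing $P=p+s$ I need $Pq+br=t$ with $P$ of the fixed parity $\rho=\mu+\nu\bmod 2$ and $b\ge 0$.

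Finally I would invoke the Chicken McNugget (Frobenius) theorem: because $\gcd(2q,r)=1$, every sufficiently large integer lies in the numerical semigroup generated by $2q$ and $r$, and likewise every sufficiently large integer is $q$ plus such an element; hence there is a threshold $T=T(q,r)$ so that for all $t\ge T$ and both parities $\rho$ the equation $Pq+br=t$ is solvable with $P\equiv\rho\pmod 2$ and $b\ge 0$ (the splitting $P=p+s$ with the prescribed individual parities is then automatic). Choosing $k\ge T+2L-2$ guarantees $t\ge T$ throughout $[k-2L+2,k]$, so the required bridge exists for every $w,u$, and the same $k$ serves all $R$. This yields condition (ii) of Proposition \ref{defspec}, and hence the specification property for every system in $\SB_\flat$. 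I expect the bookkeeping of the junction parities $\mu,\nu$ — including the degenerate cases where $w$ or $u$ is shorter than a block or contains no star — to be the only genuinely delicate point; the arithmetic is routine once $\gcd(2q,r)=1$ is in hand.
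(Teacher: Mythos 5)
Your proof is correct and takes essentially the same route as the paper's: both pass to the block level, bridge $w$ and $u$ by a run of $\gwiazdka$'s together with a parity-adjusted run of blocks from $\codenohash$ (even runs between consecutive stars being legal for every $R$), and hit the exact gap length via the Frobenius argument for the coprime pair $2q,r$; the paper merely packages the parity bookkeeping into a preliminary step that extends each of $w,u$ (with prefix and suffix of length at most $3\max(q,r)$) to a word of the form $\gwiazdka z'\gwiazdka\in\Lang(\xrb)$, after which its bridge $w_s\gwiazdka^i t\, u_p$ needs no junction parities at all. One small patch: your parenthetical that the splitting $P=p+s$ with prescribed parities is ``automatic'' fails in the edge case where $\mu,\nu$ are both odd and the chosen representation has $P=0$ (e.g.\ $t=br$, which would leave an odd run $\mu$ against the first star of $C$), but this is repaired for free by also requiring $t-2q\ge 2qr-2q-r+1$ (i.e.\ enlarging $T$ by $2q$), which guarantees a representation with $P\ge 2$ of the correct parity.
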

\begin{proof}
    Write $r=|\gwiazdka|$ and let $q$ be the number relatively prime with $r$ such that $|a|=q$ for every $a\in\A$. By our assumption $r$ is odd. 
   
    We need to show that for every $R\subseteq\Ao$ the system $\xrb \in \SAb$ has the specification property. By Proposition \ref{defspec} we need to find a natural number $k_R$ such that for every words $w,u\in \Lang(\xrb)$ there exists $v$ with $|v|=k_R$ such that $wvu\in \Lang(\xrb)$. We will show that there exists a number $k$ which works as $k_R$ for every $R\subseteq\Ao$.

    Put $$k=6 \max (q, r) + (2qr - 2q - r + 1).$$ We will show that this $k$ witnesses that every system in $\SAb$ has the specification property. Fix $R\in 2^{\Ao}$. First note that for every word $z\in \Lang(\xrb)$ there exist a prefix $z_p$ 
and a suffix $z_s$ satisfying $|z_p|, |z_s| \le 3 \max (q, r)$ and so that for some word $z'$ we have $z_p z z_s = \gwiazdka z' \gwiazdka \in \Lang(\xrb)$. Fix $w,u\in \Lang(\xrb)$. Using our observation, we find prefixes $w_p,u_p$ and suffixes $w_s,u_s$    such that  for some words $w',u'$ we have 
\[w_p w w_s = \gwiazdka w' \gwiazdka \in \Lang(\xrb)\quad\mbox{and}\quad u_p u u_s = \gwiazdka u' \gwiazdka \in \Lang(\xrb)\] and $|w_p|, |w_s|, |u_p|, |u_s| \le 3 \max (q, r)$.

Write $$d=k-|w_s|-|u_p|\geq 2pr-2q-r+1.$$
Since $r$ is odd, we get that $2q$ and $r$ are relatively prime, so by the Frobenius coin problem, every number greater or equal to $2pr-2q-r+1$ can be written as $2qi+rj$ for some $i,j\geq 0$. Find $i,j\geq0$ such that $$d=2qi+rj.$$

Let $t$ be any word which is a concatenation of $2j$ blocks from $\A$, that is $|t|=2qj$. Note that the word $$v=w_s \gwiazdka^i t  u_p$$ has length $|w_s|+|u_p|+2qi+rj=|w_s|+|u_p|+d=k$ and 
\[w_p wv u u_s=w_p w w_s \gwiazdka^i t u_p u u_s=\gwiazdka w'\gwiazdka \gwiazdka^i t\gwiazdka u'\gwiazdka \in \Lang(\xrb),\] which implies that $wv u\in \Lang(\xrb)$ and ends the proof.
\end{proof}

In the sequel we will need several admissible codes $\code$ such that the size of $\codenohash$ is a power of $2$. There is an abundance of such codes, which we show in the next several lemmas.

\begin{lemma}\label{codesize2}
    There exists an admissible code $\code$ such that $\codenohash$ has size two.
\end{lemma}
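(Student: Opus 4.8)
The plan is to exhibit an explicit three-element code and to check the four requirements of admissibility directly, the only substantial point being recognizability. Concretely, I would take $\gwiazdka=0$, so that $r=|\gwiazdka|=1$ is odd, together with the two blocks
\[
a=01011111,\qquad b=01001111,
\]
each of length $q=8$. Then $\code=\{a,b,\gwiazdka\}$ satisfies $\codenohash=\{a,b\}\neq\emptyset$, all its elements share the length $8$, and $\gcd(8,1)=1$, so the length and parity conditions hold automatically. It remains only to prove that $\code$ is recognizable.

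For this I would use the word $010$ as a synchronizing marker. The key claim is that for every $x\in\code^\Z$ the factor $010$ occurs in $x_\flat$ starting precisely at the positions that begin a non-star block. Each of $a,b$ begins with $010$, contains $010$ as a factor only in its initial position, and ends with $11$, while $\gwiazdka=0$ contains no $1$. Since the middle letter of $010$ is a $1$, and every $1$ of $x_\flat$ occurs at some internal position $1\le j\le 7$ of a non-star block, any occurrence of $010$ has its central $1$ at such a position $j$. For $j=1$ this is exactly the occurrence at the block start. For $2\le j\le 6$ the whole factor $010$ lies inside one block, which by the ``only at the start'' property forces $j=1$, a contradiction. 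For $j=7$ the letter at position $6$ equals $1$, so the factor cannot be $010$. Hence the occurrences of $010$ are precisely the non-star block starts, and this set $S\subseteq\Z$ is intrinsic to $x_\flat$.

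From the claim, recognizability follows. Given $x,y\in\code^\Z$ with $z:=x_\flat=\sigma^k(y_\flat)$ and $0\le k<|y(0)|$, the set $S$ recovers the non-star blocks of $z$ (each occupying $[s,s+8)$ for $s\in S$), and every remaining position must be a single $\gwiazdka$ of length $1$; thus the full set $B$ of block boundaries of $z$ is determined by $z$ alone. Both the $x$-decomposition and the $k$-shifted $y$-decomposition of $z$ therefore have the same boundary set $B$. Now $0\in B$ is the start of $x(0)$, while $y(0)$ occupies $[-k,-k+|y(0)|)$, whose only boundary is its left endpoint $-k$; since $0$ lies in this interval, $0=-k$, i.e.\ $k=0$, and then matching boundaries give $x=y$. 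This is exactly recognizability, so $\code$ is admissible and $\codenohash$ has size two.

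The only real obstacle is the recognizability verification, and within it the claim that $010$ never straddles a block join; everything is engineered so that this holds, since the blocks begin with the marker $010$, avoid it internally, and terminate in $11$, which together with $\gwiazdka=0$ rules out any spurious occurrence across a boundary.
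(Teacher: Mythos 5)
Your proof is correct and takes essentially the same approach as the paper: exhibit an explicit code with $\gwiazdka=0$ and two equal-length blocks whose length is coprime to $1$, then verify recognizability by showing that a marker word can occur in a point of $\sspace$ only at the start of a block from $\codenohash$. The paper uses the shorter code $a=010$, $b=011$, $\gwiazdka=0$ with marker $01$ and leaves the verification as a one-line remark, whereas you take two blocks of the length-eight code from Lemma \ref{codesize4} with marker $010$ and spell out both the marker claim and the deduction of recognizability in full detail; the substance is the same.
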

\begin{proof}
Take
    \begin{align*}
    a & = 010, \\
    b & = 011, \\
    \gwiazdka & = 0.
\end{align*}
Recognizability holds because $01$ can only appear in a word in $\sspace$ at the beginning of a block from $\codenohash$.
\end{proof}

\begin{lemma}\label{codesize4}
       There exists an admissible code $\code$ such that $\codenohash$ has size four.
\end{lemma}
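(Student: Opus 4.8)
The plan is to exhibit one explicit code and check the definition, exactly as in Lemma \ref{codesize2}. I would keep $\gwiazdka = 0$ and take the four non-star blocks of common length $5$
\begin{align*}
    a &= 01000, & b &= 01100, \\
    c &= 01110, & d &= 01111,
\end{align*}
so that $\codenohash = \{a,b,c,d\}$ has size four; equivalently these are the four words of the form $0\,1^{j}\,0^{4-j}$ with $1 \le j \le 4$. The admissibility bookkeeping is immediate: $|\gwiazdka| = 1$ is odd, every block of $\codenohash$ has length $5$, and $\gcd(5,1) = 1$, so only recognizability requires an argument.

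For recognizability I would prove the single structural claim that in any $x_\flat$ with $x \in \code^\Z$ the word $01$ occurs exactly at the starting positions of the non-star blocks, which is the same marker idea used for Lemma \ref{codesize2}. This splits into two checks. First, inside a single block the pattern $01$ occurs only at position $0$: after the initial $01$ each block continues as $1^{j-1}0^{4-j}$, a word of the shape ``ones then zeros'' that contains no occurrence of $01$. Second, $01$ cannot straddle a junction between two consecutive blocks: each of $a,b,c,\gwiazdka$ ends in $0$ and $d$ ends in $1$, while every block (including $\gwiazdka$) begins with $0$, so every junction reads $00$ or $10$. Together these show that the set of positions carrying a $01$ is intrinsic to $x_\flat$ and coincides with the set of non-star block starts.

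With the marker claim in hand I would conclude as follows. In any decomposition of $x_\flat$ into blocks the non-star blocks must sit exactly at the $01$-positions (each contributes one $01$, at its start, while $\gwiazdka$'s and junctions contribute none); since these blocks all have length $5$ they occupy determined intervals, and the remaining symbols, all equal to $0$, are forced to be $\gwiazdka$'s, so the decomposition is unique. For the phase clause of the definition, suppose $x_\flat = \sigma^k(y_\flat)$ with $0 \le k < |y(0)|$: position $0$ of $x_\flat$ is a block start, hence by uniqueness position $k$ of $y_\flat$ is a block boundary of the decomposition of $y_\flat$; if $y(0) = \gwiazdka$ then $k = 0$ trivially, and if $y(0) \in \codenohash$ then $0 < k < 5$ would place a boundary strictly inside the block $y(0)$, which is impossible, so again $k = 0$ and $x = y$. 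I expect the phase clause to be the only genuinely delicate point; the within-block and junction checks are routine, and the same template (blocks $0\,1^{j}\,0^{q-1-j}$ with $\gwiazdka = 0$) in fact yields admissible codes with $\codenohash$ of any prescribed size $q - 1$.
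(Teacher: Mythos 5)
Your proof is correct and takes essentially the same approach as the paper: exhibit an explicit finite code and verify recognizability via a marker word that can occur only at the starts of blocks from $\codenohash$. Your code $\{\gwiazdka, a, b, c, d\}$ with $\gwiazdka = 0$ and marker $01$ is a direct analogue of the paper's second example (which uses $\gwiazdka = 0$, blocks $01011111$, $01001111$, $01000111$, $01000011$, and marker $010$), just with shorter blocks and with the phase/uniqueness argument spelled out in detail where the paper leaves it as a one-line observation.
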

\begin{proof}
Take 
    \begin{align*}
    a & = 1000001, \\
    b & = 1001001, \\
    c & = 1011101, \\
    d & = 1010101, \\
    \gwiazdka & = 10001.
\end{align*}
Recognizability holds because $0110$ appears in a word in $\sspace$ only at positions where two blocks from $\code$ meet.

Alternatively, one can take
\begin{align*}
    a & = 01011111, \\
    b & = 01001111, \\
    c & = 01000111, \\
    d & = 01000011, \\
    \gwiazdka & = 0,
\end{align*}
in which case recognizability holds  because $010$ can occur in a word in $\sspace$ only at the beginning of a block from $\codenohash$.
\end{proof}

\begin{lemma}\label{codesize8}
     There exists an admissible code $\code$ such that $\codenohash$ has size eight.
\end{lemma}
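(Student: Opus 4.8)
The plan is to produce an explicit admissible code by extending the second construction in the proof of Lemma \ref{codesize4}. There, the trick was to mark the beginning of every non-$\gwiazdka$ block with the word $010$, chosen so that $010$ occurs nowhere else in an element of $\sspace$; I would keep the same marker but attach to it eight distinct ``tails'' of equal length. Concretely, I would set $\gwiazdka = 0$ and, for $k = 0, 1, \dots, 7$, take the block
\[
    w_k = 010\,\underbrace{0\cdots0}_{k}\,\underbrace{1\cdots1}_{9-k},
\]
so that $\codenohash = \{w_0, \dots, w_7\}$ has size eight.

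Each $w_k$ has length $12$, while $|\gwiazdka| = 1$ is odd, and since $\gcd(12,1) = 1$ the length of $\gwiazdka$ and the common length of the blocks in $\codenohash$ are relatively prime. Thus all the numerical requirements in the definition of an admissible code are immediate, and the only real content is recognizability.

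For recognizability I would argue, just as in Lemma \ref{codesize4}, that the word $010$ can occur in an element of $\sspace$ only at the beginning of a block from $\codenohash$. Inside a single $w_k$ the suffix $0^k 1^{9-k}$ is nondecreasing, so the only descent $10$ is the one inside the marker itself; hence the sole internal occurrence of $010$ sits at position $0$. Runs of $\gwiazdka$ contribute only $0$'s and so produce no $010$. The one point needing care --- and the step I expect to be the main obstacle --- is the behaviour at block boundaries: since every block and every $\gwiazdka$ begins with $0$, a spurious $010$ straddling a boundary can arise only from a block whose last two symbols are $01$, and this is exactly what is excluded by requiring $9 - k \ge 2$ (i.e.\ by letting $k$ run only up to $7$), so that every $w_k$ ends in $11$. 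This is precisely why the analogous list in Lemma \ref{codesize4} stopped short of the suffix $0^{n-1}1$. Once this boundary check is in place, the positions of the markers $010$ determine the starts of all non-$\gwiazdka$ blocks intrinsically, the remaining symbols are forced to be single-letter $\gwiazdka$'s, and the decomposition of any element of $\sspace$ into blocks is therefore unique, which is recognizability. Together with the numerical verification, this shows that $\code$ is admissible with $\Card(\codenohash) = 8$, as required.
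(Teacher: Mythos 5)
Your proof is correct and follows the same overall strategy as the paper --- write down an explicit code and verify recognizability by a marker-word argument --- but with a different explicit code and a different marker mechanism. The paper's proof extends the \emph{first} example of Lemma \ref{codesize4}: it takes $|\gwiazdka|=9$ and blocks of length $11$, all beginning with $10$ and ending with $01$, so that $0110$ occurs exactly at the positions where two blocks meet. You instead extend the \emph{second} example of Lemma \ref{codesize4}: $\gwiazdka=0$ and the marker $010$ flags the beginning of each block of $\codenohash$. Your case analysis closes: the tail $0^k1^{9-k}$ has no descent, so the only internal occurrence of $010$ is the marker at position $0$; and a straddling occurrence would require either a block beginning with $1$ (none exists) or the two symbols $01$ immediately preceding a block start, which can only come from a single block ending in $01$ (excluded by $9-k\ge 2$), since the only length-one block is $\gwiazdka=0\neq 1$. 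Your route has two small advantages: it is closer in spirit to the general construction of Lemma \ref{prop:generating-codes} (a one-symbol $\gwiazdka$ together with equal-length marked blocks), and it makes the cardinality claim transparent --- note that the paper's printed list has $a_1=a_2=10000000001$, so as written it exhibits only seven distinct elements of $\codenohash$ (evidently a typo), whereas your eight blocks, with pairwise distinct tails $0^k1^{9-k}$ for $k=0,\dots,7$, are visibly distinct.
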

\begin{proof}

    \begin{align*}
        a_1 & = 10000000001, \\
        b_1 & = 10010000001, \\
        c_1 & = 10111000001, \\
        d_1 & = 10101000001, \\
        a_2 & = 10000000001, \\
        b_2 & = 10000001001, \\
        c_2 & = 10000011101, \\
        d_2 & = 10000010101, \\
        \gwiazdka & = 100000001.
    \end{align*}
and note that recognizability holds because $0110$ appears in a word in $\sspace$ only at positions where two blocks from $\code$ meet.
\end{proof}

One can produce also more examples. In fact, we have the following.

\begin{lemma}\label{prop:generating-codes}
For every $n\ge 1$ there exists an admissible code $\code$ such that $\codenohash$ has $2^n$ distinct elements. 
\end{lemma}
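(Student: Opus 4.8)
The plan is to generalize the second construction in Lemma~\ref{codesize4}, where $\gwiazdka=0$ is the star and every non-star block begins with the marker $010$, arranged so that $010$ occurs nowhere else in a concatenation. Set $N=2^n$, let $\gwiazdka=0$, and for each $j\in\{0,1,\dots,N-1\}$ define the binary block
\[
B_j=0\,1\,0^{\,j+1}\,1^{\,N+1-j}.
\]
I would take $\code=\{\gwiazdka\}\cup\{B_j:0\le j\le N-1\}$. Since $|B_j|=2+(j+1)+(N+1-j)=N+4$ independently of $j$, all non-star blocks have the common length $q=N+4$, and they are pairwise distinct because the length $j+1$ of the interior run of zeros determines $j$. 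Hence $\codenohash$ has exactly $2^n$ elements.

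The admissibility conditions other than recognizability are immediate: $|\gwiazdka|=1$ is odd, $\codenohash\neq\emptyset$, all its elements share the length $q$, and $\gcd(|\gwiazdka|,q)=\gcd(1,q)=1$. So the entire content of the lemma is recognizability. The key claim I would isolate is that in every point of $\sspace$ the factor $010$ occurs exactly at the starting positions of the non-star blocks. Granting this, block starts become an intrinsic feature of the bi-infinite sequence; since each non-star block has the fixed length $q$ and each star is a single $0$, the set of occurrences of $010$ together with $q$ forces a unique tiling of the sequence into blocks and single-$0$ stars, which yields both uniqueness of the decomposition and of the phase $k$, i.e.\ recognizability in the sense of the definition. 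In the degenerate all-star case one gets the constant $0$ sequence with no occurrence of $010$; since $|\gwiazdka|=1$ forces $k=0$, recognizability holds trivially there.

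The remaining, and main, task is to prove the key claim by a case analysis ruling out any spurious $010$. Inside a single $B_j$ the only $1$ flanked by $0$'s on both sides is the symbol in position $1$, giving the initial marker: the interior run $0^{\,j+1}$ is followed by the run $1^{\,N+1-j}$, whose length is at least $2$ for all $j\le N-1$, so the first $1$ of that trailing run reads $011$ rather than $010$. Across junctions one uses that every $B_j$ ends in at least two $1$'s: reading over a block/star, star/star, star/block, or block/block boundary, the ends of blocks contribute only patterns $110$ or $111$ and runs of stars contribute only $0$'s, so the sole $1$ with $0$'s on both sides is again the position-$1$ symbol of the next non-star block. This is where the exponent $N+1-j$ is chosen precisely so that $N+1-j\ge 2$ throughout; that inequality is exactly what prevents a block from ending in a lone $1$ and what keeps the first $1$ of the trailing run from becoming a marker. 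The verification is elementary but must be carried out over all junction types, and this bookkeeping is the only real obstacle.
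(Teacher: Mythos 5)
Your proof is correct, but it takes a genuinely different route from the paper's. The paper sets $\gwiazdka=1$ and encodes each $w\in 2^n$ directly by its binary digits, taking $b_w=1\,0\,w(0)\,0\,w(1)\,0\cdots 0\,w(n-1)\,0\,1$, a block of length $2n+3$; recognizability is then a one-line observation, since inside any $b_w$ every $1$ is flanked by $0$'s, so $11$ can occur only where two blocks meet. You instead generalize the second code of Lemma \ref{codesize4}: $\gwiazdka=0$ and blocks $B_j=0\,1\,0^{\,j+1}\,1^{\,N+1-j}$ indexed in unary by the length of the interior run of zeros, with the marker $010$ occurring exactly at block starts. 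Your argument is sound: admissibility apart from recognizability is trivial because $|\gwiazdka|=1$, your key claim is true, and the tiling argument deducing recognizability from it (unique block starts, fixed block length $q$, single-$0$ stars elsewhere, and $0\le k<|y(0)|$ forcing $k=0$) is the standard one. Two remarks on the comparison. First, your junction-by-junction bookkeeping can be compressed: every $1$ in a point of $\sspace$ is either a marker $1$ (position $1$ of some $B_j$, flanked by $0$'s inside that block) or lies in a run of $1$'s of length at least two, hence has a neighbouring $1$ and can never be the center of an occurrence of $010$; this disposes of all junction types at once. Second, the trade-off between the constructions: the paper's binary encoding yields blocks of length linear in $n$, while your run-length encoding yields blocks of length $2^n+4$, exponential in $n$. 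This is harmless for the lemma as stated (and even consistent with the Remark following it, since longer blocks only lower the entropy of the resulting shift spaces), but the paper's code is more economical and its recognizability check shorter.
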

\begin{proof}
Fix $n\ge 1$. Set $\gwiazdka=1$ and for every $w=w(0)\ldots w(n-1)\in 2^n$ define
\[
b_w=10 w(0) 0 w(1) 0\ldots 0 w(n-1) 0 1.
\]
In other words, to obtain $b_w$ we replace $\ast$ in the following pattern
\[
10 (\ast 0)^n 1 =10\underbrace{\ast 0 \ast 0\ldots \ast 0}_{\ast 0\text{ repeated $n$ times}} 1
\]
by the binary digits of $w$. Put $\code=\{b_w:w\in2^n\}\cup\{\gwiazdka\}$. Note that all $b_w$ have the same length
$|b_w|=2\cdot n+ 3$. The code $\code$ is recognizable because $11$ appears only at places where two blocks are adjacent.
\end{proof}

\begin{remark}
 Taking sufficiently long code words in $\code$ we may assure  that for every preassigned $\varepsilon>0$ the topological entropy of every shift space in $\SB_\flat$ is smaller than $\varepsilon$. 
\end{remark}

\section{Classification of symbolic systems with the specification property}

Throughout this section we assume that $\Ag$ is a finite alphabet containing a distinguished symbol $\gwiazdka$ satisfying $\A\neq\emptyset$. We write $\Aut(\Ag^\mathbb{Z})$ for the group of all automorphisms of the  symbolic space $\Ag^\Z$. Given a word $w=w(0)\ldots w(l-1)$ over $\Ag$, we write
\[[w]=\{x\in\Ag^{\mathbb{Z}}: x[0,l-1]=w\}\]
for the clopen set determined by $w$.

Among all automorphisms of $\Ag^\Z$, we distinguish a subgroup that plays a special role in our proofs. In particular, for every $\varphi$ in this subgroup and every $\xr\in\SA$ we have that the image of $\xr$ via $\varphi$ is again in $\SA$. 

\begin{definition}\label{def:aut-gwiazdka}
We say that $\varphi\in\Aut(\Ag^\Z)$ 
is \textbf{$\gwiazdka$-preserving} if there is a bijection \[\varphi^*\colon (\A)^*\to(\A)^*\] that preserves the word length and such that for every $w\in(\A)^*$ we have
\[
\varphi([\gwiazdka w\gwiazdka])= [\gwiazdka \varphi^*(w)\gwiazdka].
\]
We say that the corresponding length-preserving bijection $\varphi^*$ \textbf{represents $\varphi$ on $(\A)^*$}. We write $\Aut(\Ag^\Z,\gwiazdka)$ for the set of all $\gwiazdka$-preserving automorphisms $\varphi\in\Aut(\Ag^\Z)$. 
\end{definition}

It is not difficult to see that if $\varphi\in\Aut(\Ag^\Z,\gwiazdka)$, then \[\varphi([\gwiazdka])=[\gwiazdka].\]
We will use the convention that if $\varphi\in \Aut(\Ag^\Z,\gwiazdka)$, then  $\varphi^*$ denotes the length-preserving bijection of $(\A)^*$ that representes $\varphi$ on $(\A)^*$. Note that since the empty word $\epsilon$ is the only word of length $0$, we have $\varphi^*(\epsilon)=\epsilon$. 

Now, we will define an action of the group $\Aut(\Ag^\Z,\gwiazdka)$ on $2^{\Ao}$. 
An automorphism $\varphi\in\Aut(\Ag^\Z,\gwiazdka)$ that is represented on 
$\A^*$ by $\varphi^*$ acts on $R\subseteq \Ao$ via
\[
\varphi^*(R)=\{\varphi^*(w):w\in R\}\subseteq\Ao.
\] 
Using Lemma \ref{lem:top-on-S} we identify $\SA$ with $2^{\Ao}$ by identifying $\xr\in \SA$ with $R\in 2^{\Ao}$, and so we also obtain the action of the group $\Aut(\Ag^\Z,\gwiazdka)$ on $\SA$.
   
   \begin{definition}\label{def:induced-action}
   For $\varphi\in\Aut(\Ag^\Z,\gwiazdka)$ its action on $2^{\Ao}$ is given by \[2^{\Ao}\ni R\mapsto\varphi\cdot R\in 2^{\Ao}\] where $$(\varphi \cdot R)(w)=R((\varphi^*)^{-1}(w)).$$  Identifying $\SA$ with $2^{\Ao}$ we get an action of $\Aut(\Ag^\Z,\gwiazdka)$ on $\SA$, which we refer to as the \textbf{induced action} of $\Aut(\Ag^\Z,\gwiazdka)$ on $\SA$. We write $\xr\mapsto \varphi\cdot \xr$ for the induced action.      
   \end{definition}

\begin{proposition}\label{prop:induced}
If $\varphi\in\Aut(\Ag^\Z,\gwiazdka)$ and $\xr \in \SA$, then 
\begin{equation*}
\varphi\cdot \xr=
\varphi(\xr).
\end{equation*}
\end{proposition}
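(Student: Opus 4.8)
The plan is to reduce both sides to the level of forbidden words and to exploit the fact, recorded just before the definition of the induced action, that a $\gwiazdka$-preserving automorphism fixes the positions of $\gwiazdka$ exactly. Under the identification of $\SA$ with $2^{\Ao}$, the induced action sends $R$ to $\varphi^*(R)=\{\varphi^*(w):w\in R\}$ (this is precisely the content of $(\varphi\cdot R)(w)=R((\varphi^*)^{-1}(w))$), so the claim $\varphi\cdot\xr=\varphi(\xr)$ amounts to the set equality $\varphi(\xr)=X(\varphi^*(R))$, which I would prove by two inclusions.

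First I would record the structural fact that will do all the work: since $\varphi([\gwiazdka])=[\gwiazdka]$ and $\varphi$ commutes with $\sigma$, for every $x\in\Ag^\Z$ and every $n\in\Z$ we have $\varphi(x)(n)=\gwiazdka$ if and only if $x(n)=\gwiazdka$. Thus $\varphi$ maps occurrences of $\gwiazdka$ to the very same coordinates, and consequently $\gwiazdka$-free blocks are carried to $\gwiazdka$-free blocks occupying the same window.

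Next I would show $\varphi(\xr)\subseteq X(\varphi^*(R))$. Take $x\in\xr$ and suppose toward a contradiction that some word from $F(\varphi^*(R))$ occurs in $\varphi(x)$, say $\varphi(x)[p,p+|w|+1]=\gwiazdka w\gwiazdka$ with $w\in\Ao$ and $w\notin\varphi^*(R)$. By the position-preservation fact the corresponding window in $x$ is $x[p,p+|w|+1]=\gwiazdka w'\gwiazdka$ for some $w'\in(\A)^{|w|}$. Applying the defining relation $\varphi([\gwiazdka w'\gwiazdka])=[\gwiazdka\varphi^*(w')\gwiazdka]$ to $\sigma^p(x)$ gives $\varphi(x)[p,p+|w|+1]=\gwiazdka\varphi^*(w')\gwiazdka$, hence $w=\varphi^*(w')$. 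Since $\varphi^*$ preserves length, $w'\in\Ao$, and since $x\in\xr$ we must have $w'\in R$ (otherwise $\gwiazdka w'\gwiazdka\in\fr$ would occur in $x$). Therefore $w=\varphi^*(w')\in\varphi^*(R)$, contradicting $w\notin\varphi^*(R)$. The reverse inclusion then comes for free by symmetry: $\varphi^{-1}\in\Aut(\Ag^\Z,\gwiazdka)$ is represented on $(\A)^*$ by $(\varphi^*)^{-1}$, so the inclusion just proved, applied to $\varphi^{-1}$ and to $\varphi^*(R)$, yields $\varphi^{-1}(X(\varphi^*(R)))\subseteq X((\varphi^*)^{-1}(\varphi^*(R)))=\xr$, and applying $\varphi$ gives $X(\varphi^*(R))\subseteq\varphi(\xr)$.

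The only delicate point, and the step I would write most carefully, is the bookkeeping that translates an occurrence of $\gwiazdka w\gwiazdka$ in $\varphi(x)$ into the occurrence $\gwiazdka w'\gwiazdka$ in $x$ at the same coordinates: this is where the exact $\gwiazdka$-position preservation and the block relation $\varphi([\gwiazdka w\gwiazdka])=[\gwiazdka\varphi^*(w)\gwiazdka]$ must be made to interact, and where one must note that it suffices to control odd-length blocks because even-length blocks are never forbidden and $\varphi^*$ preserves parity. Everything else is formal, and the symmetry argument via $\varphi^{-1}$ avoids repeating the computation for the second inclusion.
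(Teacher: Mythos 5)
Your proof is correct and follows essentially the same route as the paper's: both reduce the claim to the set equality $\varphi(\xr)=X(\varphi^*(R))$ via the identification of $\SA$ with $2^{\Ao}$, and then verify that equality using the $\gwiazdka$-preserving property. You simply carry out in full pointwise detail (exact preservation of the positions of $\gwiazdka$, the forbidden-word contradiction, and the symmetry step via $\varphi^{-1}$) what the paper compresses into a one-sentence language-level justification.
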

\begin{proof} 
First note that $\varphi\cdot \xr=X(\varphi\cdot R)$, which
follows from the definition of the induced action and the identification of $\SA$ with $2^{\Ao}$. Next, note that $X(\varphi\cdot R)=
X(\varphi^*(R))$, since $\varphi\cdot R$ is the characteristic function of $\varphi^*(R)$. Finally, $X(\varphi^*(R))=\varphi(\xr)$, which follows from the assumption that $\varphi$ is $\gwiazdka$-preserving and that $\gwiazdka\varphi^*(w)\gwiazdka$ appears in an element of $X(\varphi^*(R))$ if and only if $w\in R$ if and only if $\gwiazdka w \gwiazdka$ appears in an element of $\xr$. 
\end{proof}

 \begin{definition}
       We say that $\varphi\in\Aut(\Ag^\Z,\gwiazdka)$ is \textbf{almost trivial} if $\varphi^*(w)=w$ for almost all $w\in\A^*$. 
   \end{definition}

Note that if there exists an automorphism in $\Aut(\Ag^\Z,\gwiazdka)$ which is not almost trivial, then $\A$ must have at least two elements.

\begin{proposition}\label{pmpaction}
    \begin{enumerate}
        \item The induced action of $\Aut(\Ag^\Z,\gwiazdka)$ on $\SA$ preserves the conjugacy relation.
        \item If $\Gamma$ is a countable subgroup of $\Aut(\Ag^\Z,\gwiazdka)$ which does not contain an almost trivial element, then the induced action on $\SA$ preserves a probability measure and is a.e. free.
    \end{enumerate}
\end{proposition}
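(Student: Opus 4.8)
The plan is to treat the two parts separately, obtaining the first almost for free from Proposition~\ref{prop:induced}, and reducing the second to a counting statement about the permutation of odd words induced by $\varphi^*$.

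For part~(1), I would simply invoke Proposition~\ref{prop:induced}: for any $\varphi\in\Aut(\Ag^\Z,\gwiazdka)$ and any $\xr\in\SA$ we have $\varphi\cdot\xr=\varphi(\xr)$. Since $\varphi$ is an automorphism of the full shift $\Ag^\Z$, its restriction $\varphi|_{\xr}\colon \xr\to\varphi(\xr)$ is a topological conjugacy. Hence every point of an orbit of the induced action is conjugate to $\xr$, so each orbit lies inside a single conjugacy class. In particular, if $X(R_1)$ and $X(R_2)$ are conjugate then so are $\varphi\cdot X(R_1)$ and $\varphi\cdot X(R_2)$, which is exactly the assertion that the induced action preserves the conjugacy relation.

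For part~(2), I would first produce the invariant measure. Identifying $\SA$ with $2^{\Ao}$ via Lemma~\ref{lem:top-on-S}, let $\mu$ be the Bernoulli product measure assigning each coordinate $w\in\Ao$ an independent fair coin. Because $\varphi^*$ is length-preserving it restricts to a bijection $\pi=\varphi^*|_{\Ao}$ of $\Ao$, and by Definition~\ref{def:induced-action} the induced action is precisely the coordinate permutation $R\mapsto R\circ\pi^{-1}$. Since $\pi$ is a bijection, this permutation sends cylinder sets to cylinder sets of equal measure, so $\mu$ is $\Gamma$-invariant; this step is immediate and requires no further work.

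The substance of part~(2) is a.e.\ freeness, and here the plan is the following. As $\Gamma$ is countable, it suffices to show $\mu(\mathrm{Fix}(\varphi))=0$ for every non-identity $\varphi\in\Gamma$, where $\mathrm{Fix}(\varphi)=\{R : R \text{ is constant on every cycle of } \pi\}$. If $\pi$ moves infinitely many odd words, then either it has infinitely many non-trivial finite cycles, each forcing an independent event of probability at most $1/2$, or it has an infinite cycle; in either case $\mu(\mathrm{Fix}(\varphi))=0$, and summing over the countably many $\varphi$ yields freeness. The main obstacle, and the only place the hypothesis enters, is verifying that a non-identity $\varphi\in\Gamma$---which by assumption is not almost trivial, so $\varphi^*$ moves infinitely many words of $\A^*$---in fact moves infinitely many \emph{odd} words. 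For this I would use that $\varphi$ is given by a block code by Curtis--Hedlund--Lyndon \cite{hedlund}: the bijection $\varphi^*$ acts on the interior of a word $\gwiazdka w\gwiazdka$ like a cellular automaton and can differ from the identity only through this interior rule or through a bounded boundary effect near the two occurrences of $\gwiazdka$. If either the interior rule or some boundary pattern is non-trivial, then every sufficiently long word exhibiting the offending pattern is moved, and such words exist of both parities; otherwise $\varphi^*$ is the identity, contradicting that $\varphi$ is not almost trivial. Thus non-almost-triviality forces $\pi$ to move infinitely many odd words, completing the argument.
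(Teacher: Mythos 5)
Your proof is correct, and its skeleton coincides with the paper's: part (1) is quoted directly from Proposition \ref{prop:induced}, and part (2) uses the Bernoulli measure on $2^{\Ao}$, invariance under coordinate permutation, a null fixed-point set for each non-identity element, and countability of $\Gamma$. The differences are concentrated in the freeness argument, and they are worth recording. The paper picks infinitely many distinct words $z_n\in(\A)^*$ with $\gamma^*(z_n)\neq z_n$ and considers the events $\{R(z_n)=R((\gamma^*)^{-1}(z_n))\}$ for $R\in 2^{\Ao}$; this tacitly requires the $z_n$ to be of odd length, while non-almost-triviality as defined only supplies moved words in $(\A)^*$ of unspecified parity. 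Your Curtis--Hedlund--Lyndon step (a nontrivial interior rule or boundary rule is exhibited by words of both parities; if all rules are trivial then all sufficiently long words are fixed) is exactly what closes this gap, so on this point your argument is more careful than the paper's --- though your phrase ``otherwise $\varphi^*$ is the identity'' should read ``otherwise $\varphi^*$ fixes all sufficiently long words, hence $\varphi$ is almost trivial''; the contradiction is the same. Second, you organize the zero-measure computation by the cycle decomposition of $\pi=\varphi^*|_{\Ao}$: constant-on-cycle events for distinct cycles involve disjoint coordinates and so are genuinely independent, and a single infinite cycle already gives a null event. The paper instead asserts independence of the pairwise agreement events, which is literally correct only after thinning so that the pairs $\{z_n,(\gamma^*)^{-1}(z_n)\}$ are pairwise disjoint (the agreement events along all edges of one finite cycle of $\pi$ are not mutually independent, as the intersection has probability $2^{1-\ell}$ rather than $2^{-\ell}$); your formulation sidesteps this at the cost of one extra case distinction. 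Both routes yield the same conclusion, and your version would serve as a complete replacement for the paper's proof.
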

\begin{proof}
    (1) It follows directly from 
    Proposition \ref{prop:induced}. 
    
    (2) Write $\mu$ for the $(\frac{1}{2}$-$\frac{1}{2})$-Bernoulli measure on $2^{\Ao}$. This measure is pushed forwar to $\SA$ via the map $2^{\Ao}\ni\ R\mapsto \xr\in\SA$. It is routine to check that the action of $\Gamma$ on $2^{\Ao}$ preserves $\mu$, so the induced action of $\Gamma$ on $\SA$ preserves the pushforward measure. We need to show that the action is a.e. free. It is enough to do it on the level of $2^{\Ao}$ that is to show that  $\{R\in 2^{\Ao}:\gamma \cdot R=R$ for some $\gamma\neq\text{id}\}$ is $\mu$-null.
    
    Fix $\gamma\in\Gamma\setminus\{\text{id}\}$. Assume $\gamma$ is represented on  $(\A)^*$ via $\gamma^*$. 
    Since $\gamma$ is not almost trivial there  is a sequence of distinct words $z_n\in(\A)^*$ such that
    $\gamma^* (z_n) \neq z_n.$
    Note that
    $$
    \left\{ R \in 2^{\Ao} : \gamma\cdot  R = R \right\} \subseteq
    \bigcap_{n \in\mathbb{N}} \left\{ R \in 2^{\Ao} :  R(z_n) = R((\gamma^*)^{-1} (z_n)) \right\}.
    $$
    Now, the events $\big\{\{R(z_n) = R((\gamma^*)^{-1} (z_n))\}: n\in\mathbb{N}\big\}$ are independent and for every $n$ we have $\mu \left( \left\{ R \in 2^{\Ao} : R(z_n) = R((\gamma^*)^{-1} (z_n)) \right\} \right) = \frac{1}{2}$. 
    Therefore, for every $\gamma\in \Gamma$ we have \[\mu\left(\left\{ R \in 2^{\Ao} : \gamma \cdot R= R \right\}\right) = 0.\]
    
    Since $\Gamma$ is countable, the set
    $
    \bigcup_{\gamma \in \Gamma\setminus\{\text{id}\}} \left\{ R \in 2^{\Ao} : \gamma\cdot R= R \right\}
    $
    has measure $0$, and $\Gamma$  acts freely on the set 
    $$2^{\Ao}\setminus \bigcup_{\gamma \in \Gamma\setminus\{\text{id}\}}\left\{ R \in 2^{\Ao} : \gamma \cdot R= R \right\},
    $$
    which has measure $1$. 
\end{proof}

Note that if $\code\subseteq\{0,1\}^*$ is a finite recognizable code, then every $\varphi\in\Aut(\code^\Z)$ induces an automorphism $\varphi_\flat$ of $\sspace$. Composing the transformation $\varphi\mapsto\varphi_\flat$ with the induced action of $\Aut(\code^\Z,\gwiazdka)$ on $\code^\Z$ and noting that $\varphi_\flat(X_\flat)=\varphi(X)_\flat$ we obtain an action of $\Aut(\code^\Z,\gwiazdka)$ on $\SB_\flat$.

\begin{proposition}\label{nonamenable0}
     Suppose $\Ag$ is a finite alphabet with $\gwiazdka\in \Ag$. If $\Ag\setminus\{\gwiazdka\}$ has at least four elements, then there exists a nonamenable group $\Gamma<\Aut(\Ag^\Z,\gwiazdka)$ that does not contain an almost trivial element.
\end{proposition}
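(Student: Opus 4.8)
The plan is to embed a nonamenable group into $\Aut(\Ag^\Z,\gwiazdka)$ by exhibiting concrete generators, and to control the almost-trivial elements through the assignment $\varphi\mapsto\varphi^*$. First I would record that $\varphi\mapsto\varphi^*$ is a group homomorphism from $\Aut(\Ag^\Z,\gwiazdka)$ into the group of length-preserving bijections of $(\A)^*$: if $\varphi,\psi\in\Aut(\Ag^\Z,\gwiazdka)$ then $\varphi\psi([\gwiazdka w\gwiazdka])=[\gwiazdka\,\varphi^*(\psi^*(w))\,\gwiazdka]$, so $(\varphi\psi)^*=\varphi^*\psi^*$. By Definition~\ref{def:aut-gwiazdka} and the definition of almost triviality, $\varphi$ is almost trivial precisely when $\varphi^*$ moves only finitely many words of $(\A)^*$. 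Hence it suffices to produce a countable nonamenable subgroup $\Gamma<\Aut(\Ag^\Z,\gwiazdka)$ such that every $\gamma\in\Gamma\setminus\{\mathrm{id}\}$ moves infinitely many words. Note that this is a genuine requirement: non-identity almost-trivial automorphisms do exist in general (as the remark following the definition implicitly acknowledges), so it cannot be deduced merely from $\gamma^*\neq\mathrm{id}$.

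The key observation is that a \emph{ping-pong with infinite tables} delivers both nonamenability and this last requirement simultaneously. Indeed, suppose we find generators together with pairwise disjoint \emph{infinite} subsets $X_1,\dots,X_m\subseteq(\A)^*$ satisfying the usual ping-pong inequalities. Then the group they generate is a free product of at least three nontrivial cyclic groups, which contains a copy of $\FrG$ and is therefore nonamenable; moreover any reduced nonidentity word $\gamma$ maps one full infinite table $X_i$ into a disjoint table, so $\gamma(w)\neq w$ for every $w\in X_i$. Thus every nonidentity element moves infinitely many words, and in particular the homomorphism $\varphi\mapsto\varphi^*$ is injective on $\Gamma$ and $\Gamma$ contains no nonidentity almost-trivial element.

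The core of the argument is the construction of the generators themselves. Using that $\A$ has at least four elements, I would single out a pair of \emph{data} symbols and a pair of \emph{control} symbols and define several small-radius, marker-compatible reversible maps on $\Ag^\Z$: each fixes the positions carrying $\gwiazdka$, permutes the data symbols according to a rule reading a bounded control context, and is set up so that applying one map alters the context read by another, so that the induced $\gwiazdka$-preserving maps fail to commute. One checks directly that each such map is a genuine automorphism of $\Ag^\Z$ (e.g.\ an involution, or of finite order), that it is $\gwiazdka$-preserving and hence lies in $\Aut(\Ag^\Z,\gwiazdka)$, and that its $\varphi^*$ is the evident induced permutation of $(\A)^*$. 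I would then specify the infinite tables $X_1,\dots,X_m$ as explicit families of words built from marked data/control patterns, and verify the ping-pong inequalities, concluding that the generators generate a free product of at least three nontrivial cyclic groups and combining this with the previous paragraph.

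The main obstacle is precisely this construction. Since every automorphism of the full shift is a \emph{bounded-radius} sliding block code, and since we additionally demand $\gwiazdka$-preservation, the admissible maps are marker-compatible reversible cellular automata rather than arbitrary automorphisms of the tree $(\A)^*$; in particular the length-preserving and reversibility constraints rule out the usual appending- or shifting-based realizations of free actions. The delicate point is therefore to choose control-triggered permutations whose interaction is rich enough to separate reduced words along infinite ping-pong tables, yet simple enough that reversibility and $\gwiazdka$-preservation remain transparent. Once such generators and tables are in place, nonamenability, faithfulness of $\varphi\mapsto\varphi^*$, and the absence of nonidentity almost-trivial elements all follow from the ping-pong inequalities with no further work.
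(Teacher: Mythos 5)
Your proposal is a strategy outline, not a proof: the entire mathematical content of the proposition lies in the construction you defer. You correctly reduce the problem to producing explicit elements of $\Aut(\Ag^\Z,\gwiazdka)$ (which, as you note, must be realized by bounded-radius, $\gwiazdka$-preserving block codes) together with infinite ping-pong tables in $(\A)^*$, and your logical frame is sound --- a free product of three copies of $\Z_2$ is nonamenable, and with at least three pairwise disjoint infinite tables every reduced nonidentity word maps some table $X_j$ (with $j$ distinct from the indices of its first and last syllables) into a disjoint table, hence moves infinitely many words and is not almost trivial. But you never exhibit the generators, never verify they are automorphisms lying in $\Aut(\Ag^\Z,\gwiazdka)$, and never produce or check the tables; you yourself flag this as ``the main obstacle.'' Since the proposition is precisely an existence statement, a proof that stops at ``choose control-triggered permutations whose interaction is rich enough'' has not established anything: it is not clear, a priori, that the marker and reversibility constraints permit such a configuration at all, and dispelling exactly that doubt is what the proof must do.

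For comparison, the paper's proof (Lemma 5.10, modeled on Boyle) resolves this with radius-one involutions and no ping-pong: fixing four symbols $a,b,c,d\in\A$, the automorphism $\varphi_s$ for $s\in\{b,c,d\}$ swaps $s\leftrightarrow a$ at every position whose left neighbor lies outside $\{s,a\}$. One then shows by induction that any reduced word $\psi=\varphi_{t_k}\cdots\varphi_{t_1}$ satisfies $\psi^*(a^n)(k)=t_k\neq a$ and $\psi^*(a^n)(l)=a$ for $l>k$, for every $n>k$. This single ``signature'' computation simultaneously shows $\psi\neq\mathrm{id}$ (so $\langle\varphi_b,\varphi_c,\varphi_d\rangle\cong\Z_2*\Z_2*\Z_2$, which contains $\FrG$ and is nonamenable) and that $\psi$ moves the infinitely many words $a^n$ (so no nonidentity element is almost trivial). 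If you want to complete your write-up along your own lines, the fastest repair is to adopt these generators and observe that your ping-pong scaffolding becomes unnecessary: the inductive computation on the words $a^n$ already delivers both conclusions at once.
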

\begin{proof}
Assume $a,b,c,d$ are four distinct symbols in $\A$.  Fix one of these symbols, say $a$. For $s\in \{b,c,d\}$, write $\varphi_s$ for the automorphism given by a block code that for every 
$t \in \Ag \setminus \{a, s\}$ swaps every occurrence of $ts$ with $ta$.  Formally, $\varphi_s\colon\Ag^\Z\to \A^\Z$ is given by a block code $f_s\colon \Ag^3\to\Ag$ given for $xyz\in\Ag^3$ by
\[
    f_s(xyz) = 
    \begin{cases}
        a,    & \textrm{if } y = s \textrm{ and } x \notin \{s, a\} ,\\
        s,    & \textrm{if } y = a \textrm{ and } x \notin \{s, a\}, \\
        y, & \textrm{otherwise.}
    \end{cases}
\]
The following lemma and its proof is motivated by the proof of \cite[Theorem 2.4]{boyle1988}.

\begin{lemma}\label{boyle-4}
The group 
        $\Gamma=\langle \varphi_b, \varphi_c, \varphi_d \rangle<\Aut(\Ag^\Z,\gwiazdka)$ 
        is isomorphic to $\Z_2 * \Z_2 * \Z_2$ and does not contain an almost trivial element.
\end{lemma}
\begin{proof} For every $s\in\{b,c,d\}$ write $\varphi^*_{s}$  for a map that maps $w\in\A^*$ to $\varphi_s^*(w)$ where the latter word is obtained by exchanging every occurrence of $s$ with $a$ in $w$ provided that this occurrence is preceded by 
    $t\in\Ag\setminus\{a,s\}$ in the word $\gwiazdka w$. 
This implies that  $\Gamma<\Aut(\Ag^\Z,\gwiazdka)$, 
    Note also that 
    for every $t\in\{b,c,d\}$ we have $\varphi_t \varphi_t = \textrm{id}$. 
    It remains to show that if $k\in\mathbb{N}$ and $\psi = \varphi_{i_k} \varphi_{i_{k-1}} \ldots \varphi_{i_1}\in \Gamma$ 
    for some $t_1, \ldots, t_k \in \{b, c, d\}$ such that $t_j \neq t_{j + 1}$ for 
    $1\le j< k$, then $\psi$ is not almost trivial, and, in particular, $\psi\neq \text{id}$. For $n\in\mathbb{N}$ write $w_n=\underbrace{a\ldots a}_{\text{$n$ times}}$. We claim that if $n> k$, then 
        $$\psi^*(w_n)(k) = t_k \neq a\quad\mbox{and}\quad\psi^*(w_n)(l) = a\mbox{ for }l>k $$
        and, in particular $\psi^*(w_n)\not=w_n$. 
    The claim clearly holds for $k=1$. We proceed by induction, 
    assume the claim holds for some $1\le j<k$.
    Write $v_j = \varphi^*_{t_{j}} \ldots \varphi^*_{t_1}(w_n)$.
    We know that $v_j(j) = t_{j}\neq a$
    and $v_j(l) = a$ for $j<l\le n$. Together, these conditions imply $\varphi^*_{t_{j+1}}(v_j)(j+1) = 
    t_{j+1}$ and $\varphi^*_{j+1}(v_j)(l) = 
    a$ for $l>k$. Hence, our claim holds for $k=j+1$.
\end{proof}
Using Lemma \ref{boyle-4} we finish the proof of Proposition \ref{nonamenable0}, taking $\Gamma=\langle \varphi_b, \varphi_c, \varphi_d \rangle$. 
\end{proof}

\begin{corollary}
    If the alphabet $\alphabet$ consists of at least four symbols, then the conjugacy relation of symbolic subsystems of $\alphabet^\mathbb{Z}$ with the specification property is not hyperfinite
\end{corollary}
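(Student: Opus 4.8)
The plan is to exhibit a nonsmooth (indeed nonhyperfinite) orbit equivalence relation as a Borel subrelation of the conjugacy relation, using the machinery already assembled in the excerpt. The key observation is that the induced action of a nonamenable group produces a nonhyperfinite equivalence relation, and that this orbit relation sits inside conjugacy.

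\medskip

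First I would invoke Proposition \ref{nonamenable0}: since $\A$ has at least four elements, there is a nonamenable group $\Gamma < \Aut(\Ag^\Z, \gwiazdka)$ containing no almost trivial element. By Proposition \ref{pmpaction}(2), the induced action of $\Gamma$ on $\SA$ (identified with $2^{\Ao}$ via Lemma \ref{lem:top-on-S}) is a.e. free and preserves the pushforward of the $(\tfrac12,\tfrac12)$-Bernoulli measure $\mu$. Since $\Gamma$ is a countable nonamenable group acting freely and measure-preservingly on a standard probability space, the induced orbit equivalence relation $E^{\SA}_\Gamma$ is \textbf{not hyperfinite} on a $\mu$-conull set. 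This is the classical fact that a free p.m.p. action of a nonamenable group cannot generate a hyperfinite equivalence relation (a hyperfinite p.m.p. relation is amenable, whereas a free action of a nonamenable group gives a nonamenable relation; see \cite[Proposition 2.5(ii)]{JKL} together with the characterization that amenable countable equivalence relations are hyperfinite).

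\medskip

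Next I would use Proposition \ref{pmpaction}(1): the induced action preserves the conjugacy relation, so $E^{\SA}_\Gamma$ is a subrelation of the conjugacy relation restricted to $\SA$. Every system in $\SA$ has the specification property by Proposition \ref{specyfikacja0}, so $\SA$ is a class of systems with specification and the conjugacy relation on $\SA$ is the restriction of the conjugacy relation of specification systems. Since a nonhyperfinite equivalence relation cannot be a subrelation of a hyperfinite one (hyperfiniteness passes to subrelations by \cite[Theorem 5.1]{dougherty1994structure}), the conjugacy relation on $\SA$, and hence on the whole class of specification systems over $\alphabet^\Z$, is not hyperfinite.

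\medskip

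The \textbf{main obstacle} I anticipate is ensuring that the orbit relation $E^{\SA}_\Gamma$ genuinely embeds into topological conjugacy \emph{of the systems} and not merely into an abstract orbit equivalence on the parameter space: one must confirm that $\varphi \cdot \xr = \varphi(\xr)$ really means $\xr$ and $\varphi(\xr)$ are topologically conjugate via the homeomorphism induced by $\varphi$, which is exactly the content of Proposition \ref{prop:induced} combined with the fact that $\varphi \in \Aut(\Ag^\Z)$ restricts to a conjugacy between a shift space and its image. A secondary point requiring care is the passage from ``nonhyperfinite on a conull set'' to ``nonhyperfinite as a Borel equivalence relation on $\SA$,'' which is immediate since a Borel equivalence relation whose restriction to an invariant Borel set is nonhyperfinite cannot itself be hyperfinite.
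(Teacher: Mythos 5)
Your proposal is correct and takes essentially the same route as the paper: both proofs combine Proposition \ref{specyfikacja0}, Proposition \ref{nonamenable0} and Proposition \ref{pmpaction} to get a free, probability-measure-preserving, conjugacy-preserving action of a countable nonamenable group on $\SA$, then conclude non-hyperfiniteness from the standard facts that such actions induce non-hyperfinite relations (\cite[Proposition 1.7]{JKL}) and that hyperfiniteness passes to Borel subrelations (\cite[Proposition 5.2]{dougherty1994structure}). Your additional remarks on Proposition \ref{prop:induced} and on passing from a conull set to the whole space are sound but only make explicit what the paper leaves implicit.
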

\begin{proof}
 Assume that the alphabet $\alphabet$ consists of $\gwiazdka$ and at least four other elements. The set $\SA$ consists of systems with specification by Proposition
   \ref{specyfikacja0} and by Proposition \ref{pmpaction} there exists a free probability measure preserving action of a nonamenable group on $\SB$, which preserves the conjugacy relation.  Since a free pmp Borel action of a countable nonamenable group induces a non-hyperfinite equivalence relation \cite[Proposition 1.7]{JKL} and a Borel subequivalence relation of a hyperfinite equivalence relation is also hyperfinite \cite[Proposition 5.2]{dougherty1994structure}, this implies non-hyperfiniteness of the conjugacy relation of symbolic subsystems with the specification property.
\end{proof}

The same also holds without the assumption on the size of the alphabet, namely for the alphabet $\{0,1\}$. It implies Theorem \ref{rufus}.

  \begin{corollary}\label{rufus.hyperfinite}
       The conjugacy relation of symbolic systems with the specification property is not hyperfinite.
   \end{corollary}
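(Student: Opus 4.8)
The plan is to transfer the argument behind the previous corollary from a four-letters-plus-$\gwiazdka$ alphabet down to the binary alphabet $\{0,1\}$ by invoking the coding machinery developed above. First I would fix, using Lemma \ref{codesize4}, an admissible code $\code\subseteq\{0,1\}^*$ for which $\codenohash$ has exactly four elements. Since $\code$ is admissible, Theorem \ref{specyfikacja} guarantees that every symbolic system in $\SB_\flat$ has the specification property and lives over $\{0,1\}$, so it suffices to exhibit a non-hyperfinite Borel subequivalence relation of the conjugacy relation supported on $\SB_\flat$.

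Next I would produce the acting group. Treating $\code$ itself as the alphabet with distinguished symbol $\gwiazdka$, the set $\codenohash$ has four elements, so Proposition \ref{nonamenable0} yields a nonamenable group $\Gamma<\Aut(\code^\Z,\gwiazdka)$ containing no almost trivial element. By Proposition \ref{pmpaction}, the induced action of $\Gamma$ on $\SB$ preserves the conjugacy relation and admits an invariant probability measure with respect to which it is almost everywhere free.

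The crucial transfer step is to push this action from $\SB$ to $\SB_\flat$. Here I would use the map $\SB\ni X\mapsto X_\flat\in\SB_\flat$, which is injective (hence a Borel isomorphism onto its image) by Lemma \ref{lem:top-on-Sb}, together with the identity $\varphi_\flat(X_\flat)=\varphi(X)_\flat$ recorded above; the latter shows that this map intertwines the action of $\Gamma$ on $\SB$ with the action $\varphi\mapsto\varphi_\flat$ of $\Gamma$ on $\SB_\flat$. Consequently the action of $\Gamma$ on $\SB_\flat$ is again measure-preserving and almost everywhere free (with respect to the pushforward measure). Moreover, each $\varphi_\flat$ is an automorphism of the ambient shift, so it realizes an honest topological conjugacy between $X_\flat$ and $\varphi_\flat(X_\flat)$; hence the orbit equivalence relation of $\Gamma$ on $\SB_\flat$ is a Borel subequivalence relation of the conjugacy relation.

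Finally I would conclude exactly as in the four-symbol case: a free probability-measure-preserving Borel action of a countable nonamenable group induces a non-hyperfinite equivalence relation \cite[Proposition 1.7]{JKL}, while a Borel subequivalence relation of a hyperfinite relation is hyperfinite \cite[Proposition 5.2]{dougherty1994structure}. Since the conjugacy relation on the specification systems in $\SB_\flat$ contains the non-hyperfinite orbit relation of $\Gamma$, it cannot itself be hyperfinite. The only genuinely delicate point, and the one I would verify with care, is the transfer of freeness and measure-preservation across $X\mapsto X_\flat$; this rests entirely on the injectivity provided by Lemma \ref{lem:top-on-Sb} (so that orbits are not collapsed) and on the intertwining identity $\varphi_\flat(X_\flat)=\varphi(X)_\flat$, both of which are already at our disposal.
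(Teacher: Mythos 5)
Your proposal is correct and follows essentially the same route as the paper's own proof: choose a four-element admissible code via Lemma \ref{codesize4}, get a free measure-preserving conjugacy-preserving action of a nonamenable group on $\SB$ from Propositions \ref{nonamenable0} and \ref{pmpaction}, transfer it to $\SB_\flat$ through $X\mapsto X_\flat$ using $\varphi_\flat(X_\flat)=\varphi(X)_\flat$, and conclude with \cite[Proposition 1.7]{JKL} and \cite[Proposition 5.2]{dougherty1994structure}. The only difference is that you spell out the transfer of freeness via the injectivity in Lemma \ref{lem:top-on-Sb}, a detail the paper leaves implicit.
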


\begin{proof}

By Lemma \ref{codesize4} we can choose an admissible code $\code$ containing $\gwiazdka$ and at  least four other elements.  The set $\SB_\flat$ consists of systems with specification by Theorem
   \ref{specyfikacja}. Treating $\code$ as the alphabet in the definition of $\mathcal{S}(\code)$, by Proposition \ref{pmpaction} there exists a free probability measure preserving action of a nonamenable subgroup of $\Aut(\code^\Z,\gwiazdka)$ on $\SB$, which preserves the conjugacy relation.
   Consider the map $\mathcal{S}(\code) \ni X\mapsto X_\flat\in\classflat$ and note that for $\varphi\in\Aut(\code^\Z,\gwiazdka)$ and $X\in\mathcal{S}(\code)$ we have $\varphi_\flat(X_\flat)=\varphi(X)_\flat$. Thus, the induced action of $\Aut(\code^\Z,\gwiazdka)$ on the space $\sspace$ induces an action on $\classflat$. This action preserves the probability measure pushed forward to $\classflat$ from $\SB$ via the map $X\mapsto X_\flat$. By  \cite[Proposition 1.7]{JKL} and  \cite[Proposition 5.2]{dougherty1994structure},  this implies non-hyperfiniteness of the conjugacy relation of symbolic subsystems with the specification property.
\end{proof}

\section{Streamlined proof of non-smoothness and a strenghtening}


Non-hyperfiniteness of an equivalence relation always implies that it is not smooth. However, in this section, we will show a streamlined proof that the conjugacy of symbolic systems with specification is not smooth. 

We say that a countable discrete group $\Gamma$ \textbf{acts continuously} on a Polish space $X$ if for each $\gamma\in \Gamma$, the map $x \mapsto \gamma \cdot x$ is a homeomorphism of $X$. We will use the following definition due to Osin \cite{osin2021topological},  developed by Calderoni and Clay \cite{calderoni2023condensation}.
\begin{definition}
Let $E$ be a countable Borel equivalence relation on a Polish space $X$. We say that $x_0\in X$ is a \textbf{condensed point of $E$} if
\[
x_0\in\overline{\{y\in X: yEx_0 \text{ and }x_0\neq y\}},
\]
that is, $x_0$ is an accumulation point of the set $[x_0]_E\setminus \{x_0\}$.
\end{definition}
Osin \cite[Proposition 2.7]{osin2021topological} and Calderoni and Clay \cite[Proposition 2.2]{calderoni2023condensation} showed that if $\Gamma$ is a countable discrete group acting continuously on a Polish space $X$ and $E$ is the induced countable equivalence relation, then $E$ has a condensation point if and only if $E$ is not smooth.

\begin{theorem}\label{thm:condensed}
Suppose $\Ag$ is 
is an alphabet with a distinguished element $\gwiazdka\in\alphabet$ and at least two other symbols. The equivalence relation induced by the action of $\Aut(\Ag^\Z,\gwiazdka)$ on the space $\SA$ 
has a condensed point. 
\end{theorem}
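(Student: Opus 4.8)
The plan is to work through the identification $\SA\cong 2^{\Ao}$ coming from Lemma \ref{lem:top-on-S}, under which the point $\xr$ is the indicator function of $R$, the topology is the product topology on $2^{\Ao}$, and by Definition \ref{def:induced-action} the induced action is the coordinate permutation $(\varphi\cdot R)(w)=R((\varphi^*)^{-1}(w))$. A condensed point of the orbit equivalence relation $E$ is a point that is an accumulation point of its own orbit minus itself, so it suffices to produce a single $R_0$ together with, for arbitrarily large odd lengths $\ell$, an automorphism $\psi_\ell\in\Aut(\Ag^\Z,\gwiazdka)$ whose induced action changes $R_0$ only on words of length $\ell$, and genuinely changes it. Then $\psi_\ell\cdot R_0$ agrees with $R_0$ on every word of length $<\ell$, so $\psi_\ell\cdot R_0\to R_0$ in the product topology while $\psi_\ell\cdot R_0\neq R_0$, which is precisely condensation.

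The key tool is a family of \emph{transposition automorphisms}. Fix two distinct letters $a,b\in\A$, which exist by hypothesis. I claim that for any two distinct words $w_1,w_2\in(\A)^*$ of the same length $\ell$ there is an involution $\psi=\psi_{w_1,w_2}\in\Aut(\Ag^\Z,\gwiazdka)$ whose representation is the transposition $\psi^*=(w_1\ w_2)$, fixing every other word. To build $\psi$ I would use a block code with window radius $\ell+1$ that replaces an occurrence of $w_1$ by $w_2$, and of $w_2$ by $w_1$, exactly when it appears as a complete $\gwiazdka$-delimited block, i.e.\ flanked by $\gwiazdka$ on both sides, and acts as the identity everywhere else (in particular fixing every $\gwiazdka$). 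Since $w_1,w_2$ contain no $\gwiazdka$, two such flanked occurrences can never overlap in their non-star part, because an overlap would force a $\gwiazdka$ to sit inside $w_1$ or $w_2$; hence the rule is unambiguous and defines a shift-commuting continuous map. Replacing $w_1$ by $w_2$ preserves both the block length $\ell$ and its two delimiting stars, so applying the code twice restores the original sequence, whence $\psi$ is an automorphism with $\psi^2=\mathrm{id}$. Checking $\psi([\gwiazdka w\gwiazdka])=[\gwiazdka\psi^*(w)\gwiazdka]$ against the three cases $|w|\neq\ell$, $|w|=\ell$ with $w\notin\{w_1,w_2\}$, and $w\in\{w_1,w_2\}$ then confirms that $\psi$ is $\gwiazdka$-preserving with $\psi^*=(w_1\ w_2)$.

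With this tool available I would finish as follows. Put $R_0=\{a^\ell:\ell\text{ odd}\}\subseteq\Ao$ and let $X_0=X(R_0)\in\SA$ be the corresponding point. For each odd $\ell\ge 1$ set $w_1=a^\ell$ and $w_2=a^{\ell-1}b$, both of length $\ell$, and let $\psi_\ell=\psi_{w_1,w_2}$. Since $\psi^*_\ell$ is an involution, $(\psi_\ell\cdot R_0)(w)=R_0(\psi^*_\ell(w))$, so $\psi_\ell\cdot R_0$ differs from $R_0$ only at $w_1$ and $w_2$; as exactly one of $a^\ell,a^{\ell-1}b$ lies in $R_0$, we get $\psi_\ell\cdot R_0\neq R_0$. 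Because $w_1,w_2$ have length $\ell$, the set $\psi_\ell\cdot R_0$ agrees with $R_0$ on all words of length $<\ell$, so by Proposition \ref{prop:induced} the points $\psi_\ell\cdot X_0=X(\psi_\ell\cdot R_0)$ lie in $[X_0]_E\setminus\{X_0\}$ and converge to $X_0$ as $\ell\to\infty$. Hence $X_0$ is a condensed point of $E$.

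The main obstacle is the construction and verification of the transposition automorphisms: showing that the ``swap when flanked by stars'' rule really is a finite-window block code, that it is bijective (indeed an involution), and that it is exactly $\gwiazdka$-preserving with the prescribed $\psi^*$; the non-overlap argument is the crux. Everything else — the identification of the topology on $\SA$, the effect of the induced action on just two coordinates, and the convergence in the product topology — is routine once these automorphisms are in hand.
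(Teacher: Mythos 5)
Your proposal is correct and follows essentially the same route as the paper: the paper also takes $R_0=\{a^{2n-1}:n\ge 1\}$ and, for each $n$, builds a block-code involution $\varphi_n\in\Aut(\Ag^\Z,\gwiazdka)$ that swaps the $\gwiazdka$-flanked word $a^{2n-1}$ with a same-length word containing one $b$, fixing all other words, so that $\varphi_n\cdot X_0\neq X_0$ agrees with $X_0$ on short words and converges to $X_0$. The only cosmetic difference is the choice of swap partner (the paper uses the symmetric word $a^{n-1}ba^{n-1}$, so its block code only ever alters the middle letter of a window $\gwiazdka w\gwiazdka$, whereas your $a^{\ell-1}b$ needs an off-center window), and your non-overlap justification for why the swap rule is consistent and involutive is exactly the point the paper's symmetric choice makes immediate.
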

\begin{proof}
Assume $\gwiazdka\in\alphabet$ is a distinguished element and let $a,b\in\A$ be  distinct symbols. 
Let $R_0\subseteq\Ao$ be the set $\{a,aaa,aaaaa,\ldots,a^{2n-1},\ldots\}$. Note that $R_0$ has the property that for each $n\ge 1$ it contains only one word over $\A$ of length  $2n-1$. We claim that the shift space $X_0=X_{R_0}$ is a condensed point for the equivalence relation induced  by the action of $\Aut(\Ag^\Z,\gwiazdka)$ on the space $\SA$.

For each $n\ge 1$ we will find a symbolic system $X_n\in\SA$ conjugate to $X_0$ via $\varphi_n\in\Aut(\Ag^\Z,\gwiazdka)$ such that 
\begin{itemize}    
    \item[(i)] the languages of $X_n$ and $X_0$ contain the same words of length up to $2n-1$
    \item[(ii)] $\gwiazdka a^{2n-1}\gwiazdka\notin \mathrm{Lang}(X_n)$.
\end{itemize}
Condition (ii) implies that $X_n\not= X_0$. Condition (i) implies that $X_n\to X_0$ as $n\to\infty$, so having constructed $X_n$ for each $n\ge 1$ will imply that $X_0$ is a condensed point of $E$.

The map $\varphi_n$ will turn every occurrence of $\gwiazdka a^{2n-1}\gwiazdka$ into  $\gwiazdka a^{n-1}ba^{n-1}\gwiazdka$ and vice versa, every  $\gwiazdka a^{n-1}ba^{n-1}\gwiazdka$ 
will become $\gwiazdka a^{2n-1}\gwiazdka$. On the other hand, $\varphi_n$ will not change the occurrences of $\gwiazdka a^{2k-1}\gwiazdka$ for $k\neq n$.  More precisely, we define the block code map $f_n\colon \Ag^{2n+1}\to\Ag$ for $v=v(0)v(1)\ldots v(2n)\in\Ag^{2n+1}$ by  
\[
    f_n(v(0)v(1)\ldots v(2n)) = 
    \begin{cases}
        a,     & \textrm{if } v=\gwiazdka a^{n-1}ba^{n-1}\gwiazdka, \\
        b,     & \textrm{if } v=\gwiazdka a^{2n-1}\gwiazdka,  \\     
        v(n), & \textrm{otherwise.}
    \end{cases}
\]


Using $f_n$ as the block code we get the map  $\varphi_n\colon\Ag^\Z\to \Ag^\Z$ given for $x\in\Ag^\Z$ by $\varphi_n(x)=y\in\Ag^\Z$, where for every $k\in\mathbb{Z}$ we have $$y(k)=f_n(x[k-n,k+n]).$$

Note that the map $\varphi_n$ is $\gwiazdka$-preserving and it is represented on $\Ag^*$ by $\varphi_n^*$ such that  $$\varphi_n^*(a^{2n-1})=a^{n-1}ba^{n-1},\quad \varphi_n^*(a^{n-1}ba^{n-1})=a^{2n-1}\quad\mbox{and}\quad \varphi_n^*(w)=w$$ for any $w\in\Ag^*\setminus\{a^{2n-1},a^{n-1}ba^{n-1}\}$. Furthermore, by 
Proposition \ref{prop:induced} we have that
$X_n=\varphi_n(X_0)=X_{R_n}$, where
\[
R_n=\left(R_0\setminus\{\gwiazdka a^{2n-1}\gwiazdka\} \right)\cup \{\gwiazdka a^{n-1}ba^{n-1}\gwiazdka\}.
\]
So $X_n$ has the desired properties.
\end{proof}

\begin{corollary}
    Suppose $\Ag$ is  
is an alphabet with at least three symbols. The conjugacy relation on the space of subsystems of $\alphabet^\mathbb{Z}$ with the specification property is not smooth.
\end{corollary}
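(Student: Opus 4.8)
The plan is to reduce this corollary to Theorem \ref{thm:condensed} by bridging two gaps: first, the passage from a condensed point of the induced action to non-smoothness of that same action, and second, the passage from non-smoothness of the induced action on $\SA$ to non-smoothness of the actual conjugacy relation on the relevant class of symbolic systems. The hypothesis that $\alphabet$ has at least three symbols means that, after designating one symbol as $\gwiazdka$, the set $\A$ has at least two elements, which is exactly the standing hypothesis of Theorem \ref{thm:condensed}.

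First I would invoke Theorem \ref{thm:condensed} to obtain a condensed point $X_0$ for the equivalence relation $E$ induced by the continuous action of $\Aut(\Ag^\Z,\gwiazdka)$ on the Polish space $\SA$. I would note that this action is by homeomorphisms, that $\Aut(\Ag^\Z,\gwiazdka)$ is a countable group (being a subgroup of the countable automorphism group $\Aut(\Ag^\Z)$), and that $E$ is a countable Borel equivalence relation. These are precisely the hypotheses needed to apply the Osin--Calderoni--Clay criterion cited in the excerpt: since $E$ admits a condensed point, $E$ is not smooth.

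Next I would observe that the induced action of $\Aut(\Ag^\Z,\gwiazdka)$ on $\SA$ preserves the conjugacy relation, by Proposition \ref{prop:induced} (which identifies $\varphi\cdot\xr$ with $\varphi(\xr)$), so that $E$ is a subrelation of the topological conjugacy relation restricted to $\SA$. By Proposition \ref{specyfikacja0}, every system in $\SA$ has the specification property, so the conjugacy relation restricted to $\SA$ is a subrelation of the conjugacy relation on the space of all subsystems of $\alphabet^\Z$ with specification. Since smoothness passes to subrelations (a Borel reduction witnessing smoothness of the larger relation restricts to one for any subrelation), non-smoothness of $E$ forces non-smoothness of the conjugacy relation on systems with specification.

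The step I expect to require the most care is confirming that non-smoothness indeed transfers downward along these inclusions. The logical direction is the subtle point: $E \subseteq {\cong}$ means any classifying function for $\cong$ also classifies $E$, so if $E$ is not smooth then neither is $\cong$; this is the correct direction, but it must be stated with the inclusions oriented properly. I would make explicit that $E$ sits inside the conjugacy relation on $\SA$ (via Proposition \ref{prop:induced}) and that the latter sits inside the conjugacy relation on all specification systems (via Proposition \ref{specyfikacja0}), so that a single chain of subrelations carries the non-smoothness of $E$ all the way up to the conjugacy relation of symbolic systems with specification.
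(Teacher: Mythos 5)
Your overall route is exactly the paper's: apply Theorem \ref{thm:condensed} to get a condensed point, invoke the Osin--Calderoni--Clay criterion to conclude that the relation $E$ induced by $\Aut(\Ag^\Z,\gwiazdka)$ on $\SA$ is not smooth, observe via Proposition \ref{prop:induced} and Proposition \ref{specyfikacja0} that $E$ is a subrelation of the conjugacy relation on specification systems, and transfer non-smoothness upward. However, your justification of the transfer step is a genuine gap. You claim that ``a Borel reduction witnessing smoothness of the larger relation restricts to one for any subrelation,'' equivalently that ``any classifying function for $\cong$ also classifies $E$.'' This is false: if $f$ classifies the larger relation $F$ (so $x\mathrel{F}y\Leftrightarrow f(x)=f(y)$), then for a subrelation $E\subseteq F$ one only gets $x\mathrel{E}y\Rightarrow f(x)=f(y)$; the converse fails, since equality of $f$-values only recovers $F$-equivalence, not $E$-equivalence. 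Indeed, if your argument were valid, it would show that \emph{every} Borel equivalence relation is smooth, because every Borel equivalence relation on a space $X$ is a subrelation of the indiscrete relation $X\times X$, which is classified by a constant function.

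The fact you need is true, but only for \emph{countable} Borel equivalence relations, and it is not trivial: a Borel subequivalence relation of a smooth countable Borel equivalence relation is smooth. This is precisely what the paper cites, \cite[Proposition 2.1.2(i)]{realizations}; the proof uses countability in an essential way (for instance, via a Lusin--Novikov enumeration of the classes of the ambient relation one constructs a Borel selector for the subrelation). To apply it here you must also note that the ambient conjugacy relation on subsystems of $\Ag^\Z$ is itself a countable Borel equivalence relation, which holds because every conjugacy between subshifts is given by a block code (Curtis--Hedlund--Lyndon), and there are only countably many block codes. With that citation (or a selector argument) replacing your restriction argument, the remainder of your proof coincides with the paper's.
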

\begin{proof}
   Assume $\gwiazdka\in\alphabet$ is a distinguished element. The set $\SA$ consists of systems with specification by Proposition
   \ref{specyfikacja0} and by Theorem \ref{thm:condensed}, the equivalence relation induced by $\Aut(\Ag^\Z,\gwiazdka)$ on the space $\SA$ has a condensed point. Note that the action of $\Aut(\Ag^\Z,\gwiazdka)$ on the space $\SA$ preserves conjugacy.
Since a subrelation of a smooth countable Borel equivalence relation is also smooth \cite[Proposition 2.1.2(i)]{realizations} this implies that the conjugacy relation on $\class$ is not smooth.
\end{proof}

The same also holds without the assumption on the size of the alphabet, namely for the alphabet $\{0,1\}$ and we can streamline the proof of Theorem \ref{rufus}.

\begin{proof}[Proof of Theorem \ref{rufus}] By Lemma \ref{codesize2} we can choose an admissible code $\code$ with a distinguished element $\gwiazdka\in\code$ and at least two distinct elements in $\codenohash$. By Theorem \ref{specyfikacja} the class $\classflat$ consists of systems with specification. Note that treating $\code$ as the alphabet in the definition of $\mathcal{S}(\code)$, we have that the map $\mathcal{S}(\code) \ni X\mapsto X_\flat\in\classflat$ is continuous and for $\varphi\in\Aut(\code^\Z,\gwiazdka)$ and $X\in\mathcal{S}(\code)$ we have $\varphi_\flat(X_\flat)=\varphi(X)_\flat$. Thus, by Theorem \ref{thm:condensed} the action of $\Aut(\sspace,\gwiazdka)$ on $\classflat$ has a condensed point. Since the action of $\Aut(\code^\Z,\gwiazdka)$ on the space $\classflat$ preserves conjugacy and a subrelation of a smooth countable Borel equivalence relation is also smooth \cite[Proposition 2.1.2(i)]{realizations},  this implies that the conjugacy relation on $\classflat$ is not smooth.
\end{proof}

On the other hand, an easy modification of the proof of Theorem \ref{rufus.hyperfinite} gives actually a stronger conclusion than non-amenability.

\begin{theorem}\label{rufus.treeable}
    The topological conjugacy of symbolic systems with the specification property is not treeable.
\end{theorem}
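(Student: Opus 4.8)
The plan is to follow the proof of Corollary~\ref{rufus.hyperfinite} almost verbatim, except that the nonamenable group supplied by Proposition~\ref{nonamenable0} is replaced by a \emph{direct product of two} such groups, since free probability-measure-preserving actions of a product of two nonamenable groups of positive first $\ell^2$-Betti number induce non-treeable equivalence relations. First I would invoke Lemma~\ref{codesize8} to fix an admissible code $\code$ with $\codenohash=\B$ consisting of eight blocks; by Theorem~\ref{specyfikacja} every system in $\classflat$ then has the specification property, and the map $\SB\ni X\mapsto X_\flat\in\classflat$ is continuous and conjugacy-preserving with $\varphi_\flat(X_\flat)=\varphi(X)_\flat$, exactly as in Corollary~\ref{rufus.hyperfinite}.

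Next I would build inside $\Aut(\code^\Z,\gwiazdka)$ two commuting copies of $\Z_2*\Z_2*\Z_2$. Running the construction of Proposition~\ref{nonamenable0} on the first family $\{a_1,b_1,c_1,d_1\}$, with $a_1$ distinguished, yields involutions $\varphi_{b_1},\varphi_{c_1},\varphi_{d_1}$ generating a group $\Gamma_1\cong\Z_2*\Z_2*\Z_2$, and running it on $\{a_2,b_2,c_2,d_2\}$ yields $\Gamma_2$. The decisive point is that the two families commute: the block code defining $\varphi_{s_1}$ only ever toggles a symbol between $a_1$ and $s_1$, and it fires precisely when the left neighbour lies outside $\{a_1,s_1\}$; since toggling a symbol between $a_2$ and $s_2$ changes neither whether a position carries a symbol of $\{a_1,s_1\}$ nor whether its neighbour lies outside $\{a_1,s_1\}$, the action of $\varphi_{s_1}$ is unaffected by $\varphi_{s_2}$, and symmetrically. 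A position-by-position check then gives $\varphi_{s_1}\varphi_{s_2}=\varphi_{s_2}\varphi_{s_1}$ for all $s_1\in\{b_1,c_1,d_1\}$ and $s_2\in\{b_2,c_2,d_2\}$, so $\Gamma:=\langle\Gamma_1,\Gamma_2\rangle\cong(\Z_2*\Z_2*\Z_2)\times(\Z_2*\Z_2*\Z_2)$. The argument of Lemma~\ref{boyle-4}, applied to the words $a_1^n$ and $a_2^n$, shows in addition that no nonidentity element of $\Gamma$ is almost trivial: a nontrivial $\Gamma_1$-component already moves cofinally many $a_1^n$, while any $\Gamma_2$-component fixes every $a_1^n$.

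With $\Gamma$ in hand, Proposition~\ref{pmpaction} gives a free, measure preserving, conjugacy preserving action of $\Gamma$ on $\SB$, which transports through $X\mapsto X_\flat$ to a free pmp action on $\classflat$ whose orbit equivalence relation $E$ is contained in the conjugacy relation. To see that $E$ is not treeable I would appeal to Gaboriau's $\ell^2$-Betti numbers of equivalence relations: for a free pmp action one has $\beta_n^{(2)}(E)=\beta_n^{(2)}(\Gamma)$, whereas a treeable pmp equivalence relation has $\beta_n^{(2)}=0$ for every $n\ge 2$. Writing $G=\Z_2*\Z_2*\Z_2$, the group $G$ is infinite and virtually free, so $\beta_0^{(2)}(G)=0$, $\beta_n^{(2)}(G)=0$ for $n\ge2$, and $\beta_1^{(2)}(G)=-\chi(G)=\tfrac12$; the Künneth formula then yields $\beta_2^{(2)}(\Gamma)=\beta_1^{(2)}(G)^2=\tfrac14\ne0$, so $E$ is non-treeable. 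Finally, since a Borel treeing of the conjugacy relation would restrict to a measurable treeing, and a pmp subrelation of a treeable relation is again treeable (Gaboriau), treeability of the conjugacy relation would force $\beta_2^{(2)}(E)=0$, a contradiction; hence the conjugacy relation on $\classflat$ is not treeable.

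The step I expect to be the main obstacle is the middle one: verifying that the two families of swaps genuinely commute and together generate an \emph{internal} direct product $\Gamma_1\times\Gamma_2$ containing no almost trivial element, rather than accidentally collapsing to a smaller group or failing to commute. Once this combinatorial bookkeeping is in place, the non-treeability is purely a citation, because a product of two nonamenable groups of positive first $\ell^2$-Betti number has nonvanishing second $\ell^2$-Betti number and hence admits no treeable free pmp action.
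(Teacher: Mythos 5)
Your proposal is correct, and its core is exactly the paper's construction: the paper also fixes the eight-block admissible code of Lemma \ref{codesize8}, builds the two commuting families of swap involutions (its Lemma \ref{boyle-8} and Proposition \ref{double-nonamenable} carry out precisely your ``middle step,'' including the same commutation argument and the same use of the words $a_1^n$ and $a_2^n$ to rule out almost trivial elements), and transports the resulting free pmp conjugacy-preserving action of Proposition \ref{pmpaction} to $\classflat$ via $X\mapsto X_\flat$. The only genuine divergence is the final non-treeability input. The paper extracts a copy of $\FrG\times\FrG$ inside $\Gamma$ and quotes Pemantle--Peres \cite{pemantle2000nonamenable} (free pmp actions of products of nonamenable groups are not treeable), together with \cite[Proposition 3.3(iii)]{JKL} to pass from the conjugacy relation to its Borel subrelation; you instead keep $\Gamma=(\Z_2*\Z_2*\Z_2)\times(\Z_2*\Z_2*\Z_2)$ and invoke Gaboriau's $\ell^2$-Betti number machinery. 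Your computation is right: $\chi(\Z_2*\Z_2*\Z_2)=-\tfrac12$ and $\beta_0^{(2)}=0$ with vanishing higher Betti numbers (the group is virtually free), so $\beta_1^{(2)}=\tfrac12$, K\"unneth gives $\beta_2^{(2)}(\Gamma)=\tfrac14\neq 0$, free pmp actions inherit the group's $\ell^2$-Betti numbers, and treeable pmp relations have $\beta_n^{(2)}=0$ for $n\ge 2$, with the subrelation step supplied by Gaboriau (or, just as well, by the Borel statement in \cite[Proposition 3.3(iii)]{JKL}, which is what the paper uses). The trade-off: the paper's citation is lighter and tailored exactly to nonamenable products, whereas your route avoids passing to a free subgroup and produces a quantitative obstruction ($\beta_2^{(2)}>0$) that is robust under, e.g., restriction to any further free pmp action of $\Gamma$; both yield complete proofs.
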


The proof of Theorem \ref{rufus.treeable} will follow the same lines as that of Theorem \ref{rufus.hyperfinite}. First, we state and prove the necessary ingredients. 

\begin{proposition}\label{double-nonamenable}
     Suppose $\Ag\setminus\{\gwiazdka\}$ has at least eight elements. There exists a countable group $\Gamma<Aut(\Ag^\Z,\gwiazdka)$ which contains a copy of $\FrG\times \FrG$ and does not contain an almost trivial element. 
\end{proposition}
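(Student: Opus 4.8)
The plan is to take the group from Lemma \ref{boyle-4} twice, on two disjoint quadruples of symbols, and to glue the two copies into a direct product. Choose eight distinct symbols $a_1,b_1,c_1,d_1,a_2,b_2,c_2,d_2\in\A$. For $i\in\{1,2\}$ I would apply the construction in the proof of Proposition \ref{nonamenable0} to the quadruple $\{a_i,b_i,c_i,d_i\}$, with $a_i$ in the role of the fixed symbol $a$: for $s\in\{b_i,c_i,d_i\}$ let $\varphi_s\in\Aut(\Ag^\Z,\gwiazdka)$ be the automorphism whose block code swaps $ts$ with $ta_i$ for $t\notin\{a_i,s\}$, and put $\Gamma_i=\langle\varphi_{b_i},\varphi_{c_i},\varphi_{d_i}\rangle$. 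By Lemma \ref{boyle-4}, applied to each quadruple, $\Gamma_i\cong\Z_2*\Z_2*\Z_2$ and contains no almost trivial element. The computation in that lemma moreover supplies two facts I will use repeatedly: every nontrivial $g\in\Gamma_i$ satisfies $g^*(a_i^n)\neq a_i^n$ for all large $n$, while every $g\in\Gamma_i$ fixes each word over the opposite quadruple, because its generators only touch symbols lying in some pair $\{a_i,s\}$.

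First I would verify that $\Gamma_1$ and $\Gamma_2$ commute. The key structural feature of each generator $\varphi_s$, $s\in\{b_i,c_i,d_i\}$, is that it changes a coordinate only when the current symbol lies in the two-element set $\{a_i,s\}$, it maps that set into itself, it fixes every other symbol, and whether it acts at a coordinate depends only on whether the preceding symbol lies in $\{a_i,s\}$. Since the pairs used by $\Gamma_1$ are contained in $\{a_1,b_1,c_1,d_1\}$ and those used by $\Gamma_2$ in $\{a_2,b_2,c_2,d_2\}$, two disjoint sets, a generator of $\Gamma_2$ neither alters the values a generator of $\Gamma_1$ is able to change nor flips the membership test it performs, and conversely. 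A short coordinatewise case analysis then yields $\varphi_s\varphi_{s'}=\varphi_{s'}\varphi_s$ for all $s\in\{b_1,c_1,d_1\}$ and $s'\in\{b_2,c_2,d_2\}$, so $\Gamma_1$ and $\Gamma_2$ commute elementwise.

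Next I would show $\Gamma_1\cap\Gamma_2=\{\mathrm{id}\}$: if $g\neq\mathrm{id}$ lay in the intersection, then as an element of $\Gamma_2$ it would move $a_2^n$ for large $n$, whereas as an element of $\Gamma_1$ it fixes every word over $\{a_2,b_2,c_2,d_2\}$, a contradiction. Because the representation $\varphi\mapsto\varphi^*$ is a homomorphism and the two commuting subgroups meet trivially, the multiplication map $\Gamma_1\times\Gamma_2\to\Gamma:=\langle\Gamma_1,\Gamma_2\rangle$ is an isomorphism, whence $\Gamma\cong(\Z_2*\Z_2*\Z_2)\times(\Z_2*\Z_2*\Z_2)$. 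As $\Z_2*\Z_2*\Z_2$ is virtually free and non-amenable it contains a copy of $\FrG$ (for instance the index-two subgroup $\langle\varphi_{b_i}\varphi_{c_i},\varphi_{b_i}\varphi_{d_i}\rangle$, which a Euler-characteristic count shows is free of rank two), and therefore $\Gamma$ contains $\FrG\times\FrG$.

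Finally, to see that $\Gamma$ has no almost trivial element, I would write an arbitrary $g\in\Gamma\setminus\{\mathrm{id}\}$ as $g=g_1g_2$ with $g_i\in\Gamma_i$. If $g_1\neq\mathrm{id}$, then using $g^*=g_1^*g_2^*$ and $g_2^*(a_1^n)=a_1^n$ I get $g^*(a_1^n)=g_1^*(a_1^n)\neq a_1^n$ for infinitely many $n$; if $g_1=\mathrm{id}$ and $g_2\neq\mathrm{id}$ the same argument with $a_2^n$ applies. Hence $g$ is not almost trivial. I expect the commutation and direct-product step to be the main obstacle: one must check carefully that the two families of marker automorphisms do not interfere, and this rests entirely on each generator preserving its own two-symbol class and testing a membership condition that is invariant under the other family.
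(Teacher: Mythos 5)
Your proposal is correct and follows essentially the same route as the paper's proof (Proposition \ref{double-nonamenable} together with Lemma \ref{boyle-8}): the same marker automorphisms $\varphi_s$ built on two disjoint quadruples, the same elementwise commutation argument between $\Gamma_1$ and $\Gamma_2$, and the same test words $a_i^n$ used to rule out almost trivial elements and to conclude via $\FrG < \Z_2 * \Z_2 * \Z_2$. The only cosmetic difference is that you package the direct-product structure as ``commuting subgroups with trivial intersection,'' whereas the paper runs the $a_i^n$ induction directly on mixed reduced words; both rest on identical computations.
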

\begin{proof}
Assume $\A$ contains eight distinct symbols denoted $a_1,b_1,c_1,d_1$ and $a_2,b_2,c_2,d_2$. For $s\in\{b_1,c_1,d_1,b_2,c_2,d_2\}$, write $f_s$ for the block code that exchanges $ts$ with 
$ty$ if either $y=a_1$, $s\in\{b_1,c_1,d_1\}$ and $t \in \Ag \setminus \{s,a_1\}$, or if $y=a_2$, $s\in\{b_2,c_2,d_2\}$ and $t \in \Ag \setminus \{s,a_2\}$. Formally, if $i\in\{1,2\}$ is such that $s\in\{b_i,c_i,d_i\}$, then the map $f_s \colon\Ag^3\to \Ag$ is given by 
$$
    f_s(xyz) = 
    \begin{cases}
        a_i,    & \textrm{if } y = s \textrm{ and } x\in \not \in \{s, a_i\}, \\
        s,    & \textrm{if } y = a_i \textrm{ and } x \not \in \{s, a_i\}, \\
        y, & \textrm{otherwise.}
    \end{cases}
$$
and we write $\varphi_s$ for the induced automorphism of $\Ag^\Z$.
Now put $\Gamma=\Gamma_8$. 
Proposition \ref{double-nonamenable} follows from Lemma \ref{boyle-8} because $\Z_2 * \Z_2 * \Z_2$ contains $\FrG$. 
\end{proof}
The following lemma is analogous to Lemma \ref{boyle-4}.
\begin{lemma}\label{boyle-8}
        
The group $\Gamma=\langle \varphi_{b_1}, \varphi_{c_1}, \varphi_{d_1}, \varphi_{b_2}, \varphi_{c_2}, \varphi_{d_2} \rangle<\Aut(\Ag^\Z,\gwiazdka)$ is isomorphic to the product $(\Z_2 * \Z_2 * \Z_2)\times (\Z_2 * \Z_2 * \Z_2)$ and contains no almost trivial element.
\end{lemma}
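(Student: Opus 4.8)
The plan is to split the six generators according to the two ``levels'' of symbols on which they act. Set $\Gamma_1=\langle \varphi_{b_1},\varphi_{c_1},\varphi_{d_1}\rangle$ and $\Gamma_2=\langle \varphi_{b_2},\varphi_{c_2},\varphi_{d_2}\rangle$, so that each $\Gamma_i$ is built from the level-$i$ block codes, which only ever replace symbols lying in $\{a_i,b_i,c_i,d_i\}$ and fix, coordinatewise, every symbol outside this set. The goal is to show that $\Gamma=\Gamma_1\Gamma_2$ is the internal direct product of $\Gamma_1$ and $\Gamma_2$, each of which is a copy of $\Z_2 * \Z_2 * \Z_2$. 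First I would observe that the generators $\varphi_{b_i},\varphi_{c_i},\varphi_{d_i}$ are literally instances of the construction of Lemma \ref{boyle-4}, with the roles of $a,b,c,d$ played by $a_i,b_i,c_i,d_i$; the presence of the remaining alphabet symbols is harmless, since the test words $a_i^n$ used in that proof stay within level $i$ and the side conditions only ever involve $\gwiazdka$ and level-$i$ symbols. Hence Lemma \ref{boyle-4} gives $\Gamma_1\cong\Gamma_2\cong\Z_2 * \Z_2 * \Z_2$, and neither subgroup contains an almost trivial element; in particular, for each nontrivial $\psi_i\in\Gamma_i$ one has $\psi_i^*(a_i^n)\neq a_i^n$ for all large $n$.

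The heart of the argument, and the step I expect to be the main obstacle, is to prove that $\Gamma_1$ and $\Gamma_2$ commute elementwise; it suffices to check that $\varphi_s\varphi_{s'}=\varphi_{s'}\varphi_s$ for a level-1 generator $\varphi_s$ and a level-2 generator $\varphi_{s'}$. This is a local computation on the defining block codes. The key point is that $f_s$ reads a symbol together with its left neighbour and acts only when the symbol lies in $\{a_1,s\}$ and the neighbour lies outside $\{a_1,s\}$, whereas $f_{s'}$ alters symbols only inside the disjoint set $\{a_2,s'\}$. Therefore applying $\varphi_{s'}$ changes neither the set of positions at which $\varphi_s$ acts (a symbol of $\{a_1,s\}$ is untouched by $\varphi_{s'}$) nor the truth of the side condition (a neighbour is moved only within $\{a_2,s'\}$, and every element of $\{a_2,s'\}$ lies outside $\{a_1,s\}$); a symmetric statement holds with the two roles reversed. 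Splitting into three cases according to whether the centre symbol lies in $\{a_1,s\}$, in $\{a_2,s'\}$, or in neither, one finds that the two compositions agree at every coordinate, so $[\Gamma_1,\Gamma_2]=1$.

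Next I would record that every element of $\Gamma_i$ fixes, coordinatewise, all symbols outside $\{a_i,b_i,c_i,d_i\}$ (this holds for each generator and is preserved under composition). Consequently a common element $\psi\in\Gamma_1\cap\Gamma_2$ fixes every coordinate whose symbol is non-level-1 as well as every coordinate whose symbol is non-level-2; since each symbol fails to be of at least one of the two levels, $\psi=\mathrm{id}$, and thus $\Gamma_1\cap\Gamma_2=\{\mathrm{id}\}$. Combining this with the commuting relation produces a surjective homomorphism $(\Z_2 * \Z_2 * \Z_2)\times(\Z_2 * \Z_2 * \Z_2)\to\Gamma$ sending $(\alpha,\beta)\mapsto\Phi_1(\alpha)\Phi_2(\beta)$, where $\Phi_1,\Phi_2$ are the isomorphisms onto $\Gamma_1,\Gamma_2$ from the previous paragraph. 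Its injectivity follows because $\Phi_1(\alpha)=\Phi_2(\beta)^{-1}\in\Gamma_1\cap\Gamma_2=\{\mathrm{id}\}$ forces $\alpha=\beta=1$, whence $\Gamma\cong(\Z_2 * \Z_2 * \Z_2)\times(\Z_2 * \Z_2 * \Z_2)$.

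Finally, for the no-almost-trivial-element assertion, I would write a nontrivial $\psi\in\Gamma$ as $\psi=\psi_1\psi_2$ with $\psi_i\in\Gamma_i$, and assume without loss of generality that $\psi_1\neq\mathrm{id}$. Using that $\varphi\mapsto\varphi^*$ is a homomorphism and that $\psi_2\in\Gamma_2$ fixes every word over level-1 symbols, we get $\psi^*(a_1^n)=\psi_1^*\bigl(\psi_2^*(a_1^n)\bigr)=\psi_1^*(a_1^n)\neq a_1^n$ for all sufficiently large $n$. Thus $\psi$ moves infinitely many words and is not almost trivial, completing the proof.
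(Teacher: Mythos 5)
Your proof is correct, and its core ingredients coincide with the paper's: you invoke Lemma \ref{boyle-4} verbatim on each level to get $\Gamma_1\cong\Gamma_2\cong\Z_2*\Z_2*\Z_2$, you prove elementwise commutation of $\Gamma_1$ and $\Gamma_2$ by exactly the same local analysis of the block codes, and you detect non-almost-triviality on the test words $a_i^n$. Where you genuinely diverge is in how the isomorphism with $(\Z_2*\Z_2*\Z_2)\times(\Z_2*\Z_2*\Z_2)$ is clinched. The paper takes an arbitrary word $\varphi_{s_k}\cdots\varphi_{s_1}$ with $s_j\neq s_{j+1}$, uses commutation to move all level-$1$ generators to one side, and reruns the induction of Lemma \ref{boyle-4} on the rearranged word; you instead verify the internal direct product criterion, proving $\Gamma_1\cap\Gamma_2=\{\mathrm{id}\}$ by the support observation that every element of $\Gamma_i$ fixes, coordinatewise, all symbols outside level $i$, and then handling non-almost-triviality through the canonical decomposition $\psi=\psi_1\psi_2$. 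Your route buys robustness: after the paper's rearrangement the word need not remain reduced --- e.g.\ $\varphi_{b_1}\varphi_{b_2}\varphi_{b_1}$ rearranges to $\varphi_{b_2}\varphi_{b_1}\varphi_{b_1}=\varphi_{b_2}$, and $\varphi_{b_1}\varphi_{b_2}\varphi_{b_1}\varphi_{b_2}=\mathrm{id}$ even though adjacent letters are distinct --- so the paper's induction, as literally stated, only applies to words already in the canonical form (level-$2$ reduced)(level-$1$ reduced); your trivial-intersection argument together with the decomposition $\psi=\psi_1\psi_2$ treats all elements of $\Gamma$ uniformly and quietly closes this gap. The price is two extra routine observations (the support statement and the fact that $\varphi\mapsto\varphi^*$ is a homomorphism with $\psi_2^*$ fixing level-$1$ words), both of which you justify.
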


\begin{proof} The proof that each $\varphi_s$  is an involution, belongs to $\Aut(\Ag,\gwiazdka)$, and that for $i\in\{1,2\}$ the group 
        $\langle \varphi_{b_i}, \varphi_{c_i}, \varphi_{d_i} \rangle$
is isomorphic to $\Z_2 * \Z_2 * \Z_2$ is the same as the proof of Lemma \ref{boyle-4}. We claim that the maps $\varphi_{s}$ and $\varphi_{t}$ commute for $s\in\{b_1,c_1,d_1\}$ and $t\in\{b_2,c_2,d_2\}$. To see this, fix $x\in \Ag^\Z$ 
 and note that for $i\in\{1,2\}$ and $s\in\{b_i,c_i,d_i\}$, the point $\varphi_s(x)$ is obtained from $x$ by interchanging $s$ and $a_i$ at those positions $x(j)$ in $x$ that $x(j-1)\notin\{s,a_i\}$. This means  $\varphi_s\varphi_t(x)=\varphi_t\varphi_s(x)$ for every $x\in \Ag^\Z$. 
 It follows that the groups  $\langle \varphi_{b_1}, \varphi_{c_1}, \varphi_{d_1} \rangle$ and $\langle \varphi_{b_2}, \varphi_{c_2}, \varphi_{d_2} \rangle$ commute. It remains to show that if $\psi\in\Gamma$ is given by $\psi = \varphi_{s_k} \varphi_{s_{k-1}} \dots \varphi_{s_1}$, where $k\in\mathbb{N}$ and  $s_1, \dots, s_k\in \{b_1, c_1, d_1,b_2,c_2,d_2\}$ is a sequence such that $s_j \neq s_{j + 1}$ for $1\le j< k$, then $\psi$ is not almost trivial, in particular, $\psi\neq\text{id}$.
 
 Let $0\le m\le k$ be the number of those elements of $\{s_1,\ldots, s_k\}$ that belong to $\{b_1,c_1,d_1\}$. 
 
 If $m>0$, then since maps $\varphi_{s}$ and $\varphi_{t}$ commute for $s\in\{b_1,c_1,d_1\}$ and $t\in\{b_2,c_2,d_2\}$, we can assume $s_i\in \{b_1,c_1,d_1\}$ for $i\leq m$. For every $n>m$ take  $w_n=\underbrace{a_{1} \dots a_{1}}_{\text{$n$ times}}$ and by induction on $m$ show that
    $$\psi^*(z_n)(m) = s_m \neq a\quad\mbox{and}\quad\psi^*(z_n)(l) = a_1\mbox{ for }l>m $$
        and, in particular $\psi^*(w_n)\not=w_n$. 
        
        If $m=0$, then and the same argument shows that for $n>k$ we have $\psi^*(w_n)\not=w_n$ with $w_n=\underbrace{a_{2} \dots a_{2}}_{\text{$n$ times}}$.
\end{proof}

\begin{corollary}
    If the alphabet $\alphabet$ consists of at least nine symbols, then the conjugacy relation of symbolic systems in $\alphabet^\mathbb{Z}$ with the specification property is not treeable
\end{corollary}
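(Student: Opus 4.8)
The plan is to mirror the proof of Corollary \ref{rufus.hyperfinite}, but replace non-hyperfiniteness with non-treeability and replace the nonamenable free group $\Z_2 * \Z_2 * \Z_2$ by a group containing $\FrG\times\FrG$, which is known to induce a non-treeable equivalence relation. The alphabet now has at least nine symbols, namely the distinguished $\gwiazdka$ together with the eight symbols $a_1,b_1,c_1,d_1,a_2,b_2,c_2,d_2$ required by Proposition \ref{double-nonamenable}.

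First I would invoke Proposition \ref{specyfikacja0} to ensure that every system in $\SA$ has the specification property, so that the conjugacy relation we care about contains, as a subrelation, the orbit equivalence relation of the induced action of $\Aut(\Ag^\Z,\gwiazdka)$ on $\SA$. Next, by Proposition \ref{double-nonamenable}, since $\A$ has at least eight elements, there is a countable group $\Gamma<\Aut(\Ag^\Z,\gwiazdka)$ that contains a copy of $\FrG\times\FrG$ and contains no almost trivial element. By Proposition \ref{pmpaction}(2) the induced action of $\Gamma$ on $\SA$ preserves a probability measure and is a.e.\ free, and by Proposition \ref{pmpaction}(1) it preserves the conjugacy relation.

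The key point is then the following standard fact from the theory of treeable equivalence relations: a free probability-measure-preserving Borel action of a countable group containing $\FrG\times\FrG$ induces an equivalence relation that is not treeable, because $\FrG\times\FrG$ is not treeable (its a.e.\ free pmp actions yield non-treeable relations, see \cite[Section 3]{JKL}), and a treeable countable Borel equivalence relation has every Borel subequivalence relation treeable as well. Since the orbit equivalence relation of the restriction to the $\FrG\times\FrG$ subgroup is a subrelation of the full $\Gamma$-relation, and the latter is a subrelation of the conjugacy relation, non-treeability of the conjugacy relation on $\SA$ follows. The main obstacle, as in the hyperfinite case, is purely the invocation of these closure and rigidity results rather than any new construction; everything geometric has already been done in Lemma \ref{boyle-8}.

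Finally I would conclude for an alphabet with at least nine symbols by applying the above with the distinguished symbol $\gwiazdka$ and the eight other symbols, exactly as in the proof of the corollary preceding Corollary \ref{rufus.hyperfinite}, invoking that a Borel subequivalence relation of a treeable equivalence relation is treeable and that the action preserves conjugacy, so that non-treeability of the subrelation forces non-treeability of the conjugacy relation on the space of subsystems of $\alphabet^\Z$ with specification.
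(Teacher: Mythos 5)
Your proposal is correct and follows essentially the same route as the paper's proof: Proposition \ref{specyfikacja0} for specification, Propositions \ref{pmpaction} and \ref{double-nonamenable} for a free pmp conjugacy-preserving action of a group containing $\FrG\times\FrG$, non-treeability of free pmp actions of $\FrG\times\FrG$ (which the paper attributes to Pemantle--Peres \cite{pemantle2000nonamenable}), and closure of treeability under Borel subequivalence relations \cite[Proposition 3.3(iii)]{JKL}. The only difference is your vaguer citation for the non-treeability of $\FrG\times\FrG$, which does not affect the mathematics.
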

\begin{proof}
    Assume the alphabet $\alphabet$ contains a symbol $\gwiazdka$ and at least eight other symbols. The set $\SA$ consists of systems with specification by Proposition
   \ref{specyfikacja0}. Using Proposition \ref{pmpaction} and Proposition \ref{double-nonamenable} we get a free probability measure preserving (pmp) actions of $\FrG\times\FrG$ $\SA$ such that the action preserves the conjugacy relation.
     By the result of Pemantle--Peres \cite{pemantle2000nonamenable} the equivalence relation induced by the action of $\FrG\times\FrG$ is not treeable . Hence, the conjugacy of systems with specification is not treeable either, since a Borel subequivalence relation of a treeable countable Borel equivalence relation is also treeable \cite[Proposition 3.3(iii)]{JKL}. 
\end{proof}
Now we give the proof of Theorem \ref{rufus.treeable}.
\begin{proof}[Proof of Theorem \ref{rufus.treeable}]

By Lemma \ref{boyle-8} we choose an admissible code $\code$ consisting of the distinguished block $\gwiazdka$ and at least eight other symbols. The set $\SB_\flat$ consists of systems with specification by Theorem
   \ref{specyfikacja}. Treating $\code$ as the alphabet in the definition of $\mathcal{S}(\code)$, by Proposition \ref{pmpaction} and Proposition \ref{double-nonamenable} there exists a free probability measure preserving action of a copy of $\FrG\times\FrG$ in $\Aut(\code^\Z,\gwiazdka)$ on $\SB$, which preserves the conjugacy relation.
   Consider the map $\mathcal{S}(\code) \ni X\mapsto X_\flat\in\classflat$ and note that for $\varphi\in\Aut(\code^\Z,\gwiazdka)$ and $X\in\mathcal{S}(\code)$ we have $\varphi_\flat(X_\flat)=\varphi(X)_\flat$. Thus, the induced action of $\Aut(\code^\Z,\gwiazdka)$ on the space $\sspace$ induces an action on $\classflat$. This action preserves the probability measure pushed forward to $\classflat$ from $\SB$ via the map $X\mapsto X_\flat$. By the result of result of Pemantle--Peres \cite{pemantle2000nonamenable} and \cite[Proposition 3.3(iii)]{JKL}, this implies non-treeability of the conjugacy relation of symbolic subsystems with the specification property.
\end{proof}


\section{Pointed systems with the specification property}

We now turn our attention to pointed systems. 
Recall that every system with the specification property is transitive \cite[Theorem 5(4)]{kwietniak2016panorama}. Thus, below we define a pointed system with the specification property as a pointed transitive system such that the system has the specification property.
\begin{definition}
A \textbf{pointed system with the specification property} is a triple $(X,\tau,x)$ such that $(X,\tau)$ is a system with specification and $x\in X$ has dense forward orbit.    
\end{definition}
Given a topological space $X$ we refer to dynamical systems with the underlying space $X$ as to \textbf{$X$-systems}.
\begin{proposition}\label{trans-red-spec}
Let $X$ be a  compact metric space. 
The topological conjugacy relation of pointed transitive $X$-systems is Borel-reducible to the topological conjugacy of pointed $X^\Z$-systems with the  specification property.
\end{proposition}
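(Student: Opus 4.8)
The plan is to carry out the reduction inside the full shift $(X^\Z,\sigma)$, which I first note has the specification property: given $\varepsilon>0$, choose $m$ so large that any two points of $X^\Z$ agreeing on coordinates $[-m,m]$ are within $\varepsilon$, and put $k=2m+1$; any $k$-spaced specification is then $\varepsilon$-traced by the point obtained by copying each orbit segment together with an $m$-neighbourhood of its window, these neighbourhoods being pairwise disjoint by the spacing condition. Thus it suffices to send each pointed transitive $X$-system $(X,\varphi,x)$ to a pointed $X^\Z$-system of the form $(X^\Z,\sigma,z)$ for a transitive $z=z(\varphi,x)\in X^\Z$, since then the target lies among pointed $X^\Z$-systems with specification. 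Conjugacy of two such targets $(X^\Z,\sigma,z)$ and $(X^\Z,\sigma,z')$ means exactly that there is a homeomorphism $H\colon X^\Z\to X^\Z$ with $H\sigma=\sigma H$ and $H(z)=z'$.

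The point $z$ will be built so as to record the embedding of $(X,\varphi)$ as a subshift. Write $\pi_\varphi\colon X\to X^\Z$ for $\pi_\varphi(w)(i)=\varphi^i(w)$; this is a topological conjugacy of $(X,\varphi)$ onto the subshift $\pi_\varphi(X)\subseteq X^\Z$ sending $x$ to $\pi_\varphi(x)$. I will define $z$ on $\mathbb{N}_0$ (setting its negative coordinates to the constant $x$, which is irrelevant to the forward orbit) by concatenating, according to a fixed combinatorial recipe that does not depend on $(\varphi,x)$, two kinds of blocks: \emph{signal blocks} $(\varphi^{-m_n}(x),\ldots,\varphi^{m_n}(x))$ of radius $m_n\to\infty$, placed so that their centres sit at recipe-determined positions $c_n$; and \emph{filler blocks} running through all words $(\varphi^{k_1}(x),\ldots,\varphi^{k_j}(x))$ with $k_1,\ldots,k_j\in\mathbb{N}_0$. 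Since $x$ has dense forward orbit, such tuples are dense in every $X^j$, so the filler blocks guarantee that the forward $\sigma$-orbit of $z$ is dense in $X^\Z$ and $z$ is transitive. The essential feature is that the block skeleton (the positions $c_n$, the radii $m_n$, and the filler lengths) is identical for every system, so $z(\varphi,x)$ and $z(\psi,y)$ differ only by replacing each orbit value $\varphi^{k}(x)$ with $\psi^{k}(y)$ in the corresponding slot.

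For the forward implication, suppose $\theta\colon X\to X$ is a pointed conjugacy of $(X,\varphi,x)$ onto $(X,\psi,y)$, so $\theta\varphi=\psi\theta$ and $\theta(x)=y$. Then the coordinatewise homeomorphism $\theta^\Z\colon X^\Z\to X^\Z$ commutes with $\sigma$, and since $\theta$ carries $\varphi^{k}(x)$ to $\psi^{k}(y)$ for every $k\in\Z$, it maps each block of $z(\varphi,x)$ onto the corresponding block of $z(\psi,y)$; hence $\theta^\Z(z(\varphi,x))=z(\psi,y)$ and the two pointed $X^\Z$-systems are conjugate.

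The reverse implication is the crux, and is where the two-sided centring of the signal blocks is used. The key computation is that for every fixed $a\in\Z$ one has $\sigma^{c_n+a}z\to\pi_\varphi(\varphi^a x)$ as $n\to\infty$: indeed $\sigma^{c_n+a}z(i)=z(c_n+a+i)=\varphi^{a+i}(x)=\pi_\varphi(\varphi^{a}x)(i)$ whenever $|a+i|\le m_n$, and $m_n\to\infty$ makes this hold for each coordinate $i$ eventually, cleanly on both sides. Now if $H$ is a conjugacy with $H(z)=z'$, then $H(\sigma^{c_n+a}z)=\sigma^{c_n+a}z'$; letting $n\to\infty$ and using that $z'=z(\psi,y)$ has its signal blocks at the very same centres $c_n$, continuity of $H$ gives $H(\pi_\varphi(\varphi^a x))=\pi_\psi(\psi^a y)$ for all $a\in\Z$. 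Taking closures of these index-matched dense sets shows that $H$ restricts to a homeomorphism $\pi_\varphi(X)\to\pi_\psi(X)$ commuting with $\sigma$ and sending $\pi_\varphi(x)$ to $\pi_\psi(y)$; conjugating this restriction by $\pi_\varphi$ and $\pi_\psi$ produces a pointed conjugacy $\pi_\psi^{-1}\circ H\circ\pi_\varphi$ of $(X,\varphi,x)$ onto $(X,\psi,y)$. Finally, the assignment $(\varphi,x)\mapsto z(\varphi,x)$ is visibly Borel, since each coordinate of $z$ is a recipe-determined iterate $\varphi^{k}(x)$. The main obstacle is precisely guaranteeing that the subsequential limits recover the embedded pointed system exactly — in particular with the correct negative coordinates — which is resolved by centring the signal blocks and letting their radius tend to infinity.
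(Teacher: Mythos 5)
Your proof is correct, but it takes a genuinely different route from the paper's. The paper does not work with the plain shift: it sends $(X,\varphi,x)$ to $(X^\Z,\tau_\varphi,x')$, where $\tau_\varphi(z)(i)=\varphi(z(i+1))$ is a $\varphi$-twisted shift, so the underlying homeomorphism varies with the input and specification must be verified for each $\tau_\varphi$ (the verification is uniform, but it is redone in the twisted setting). The transitivity device is essentially the same as yours: blocks $I_m$ enumerating all finite tuples of natural numbers, realized as iterates of $x$. The recovery mechanism, however, is different. The paper's marker blocks $J_m$ are \emph{constant}, with value $\varphi^{-l(m)}(x)$, so that $\tau_\varphi^{l(m)}(x')\to\bar x=(\ldots,x,x,x,\ldots)$; thanks to the twist, the constant sequences form a $\tau_\varphi$-invariant subsystem on which $\tau_\varphi$ acts as $\varphi$, so any conjugacy $\rho$ must satisfy $\rho(\bar x)=\bar y$ and restricts to a pointed conjugacy of the orbit closures of $\bar x$ and $\bar y$, which are copies of $(X,\varphi,x)$ and $(X,\psi,y)$. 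You instead keep the homeomorphism equal to $\sigma$ for every input and encode the dynamics entirely in the marked point, recovering $(X,\varphi)$ as the orbit subshift $\pi_\varphi(X)$ through centred signal blocks of growing radius. Your route buys two things: specification is checked once and for all (for the full shift, where it is immediate), and the reduction lands in the finer relation induced by the action of $\Aut(X^\Z)$ on transitive points of the shift, which is formally a stronger conclusion; indeed it is the same device (constant runs producing index-matched limits, followed by extending the induced map using density of the orbit) that the paper itself uses later in the proof of Theorem \ref{bruin.vejnar} for the Hilbert cube. The paper's route buys a shorter recovery step: no embedding $\pi_\varphi$ and no closure argument over the orbit subshift are needed, since the original pointed system is read off directly from the invariant set of constant sequences.
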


\begin{proof} We enumerate all finite sequences of natural numbers with odd  length as $a_1,a_2,\ldots$. 
For each $m\in\mathbb{N}$ we fix finite intervals (sets of consecutive natural numbers) $I_m$ and $J_m$ in such way that $I_m$ and $J_m$ have odd cardinality equal to the length of the sequence $a_m$, and the the family $\{I_m,J_m:m\in\mathbb{N}\}$ consists of pairwise disjoint sets covering $\Z$. Note that $\lim_{m\to\infty}|J_m|=\infty$.  Write $k(m)\in\Z$ for the midpoint of $I_m$ and $l(m)$ for the midpoint of $J_m$.

Given a system $\varphi\in\homeo(X)$ and $x\in X$ 
we define $\tau_\varphi\in\homeo(X)$ and $x'\in X^\Z$. 
The homeomorphism $\tau_\varphi\colon X^\mathbb{Z}\to X^\mathbb{Z}$ is defined for $z=(z(n))_{n\in\Z}\in X^\Z$ and $i\in\Z$ by \[\tau_\varphi(z)(i)=\varphi(z(i+1)).\]
The point $x'=({x'}(n))_{n\in\Z} \in X^\Z$ is defined as
\begin{equation*}
{x'}(i)=
\begin{cases}
\varphi^{-k(m)+a_m(i)}(x),& \text{if }i\in I_m,\\
\varphi^{-l(m)}(x), &\text{if } i\in J_m.
\end{cases}
\end{equation*}
Note that the map 
$\homeo(X)\times X\ni(\varphi,x)\mapsto(\tau_\varphi,x')\in\homeo(X^\Z)\times X^\Z$
is continuous.

Fix a pointed transitive system $(X,\varphi,x)$. Write $d$ for the metric on $X$ and $d'$ for a metric compatible with the product topology on $X^\Z$.

First, we show that the pointed system $(X^\mathbb{Z},\tau_\varphi,x')$ is also transitive.  Fix $z\in X^\Z$ and $\varepsilon>0$. Choose $n>0$ and $\delta>0$ such that if $y\in X^\Z$ satisfies $d(y(i),z(i))<\delta$ for $|i|\leq n$, then $d'(y,z)<\varepsilon$. 
For each $i$ with $|i|\le n$, using density of the $\varphi$-orbit of $x$ in $X$ we find an integer $a(i)>0$ such that 
\[
d(\varphi^{a(i)}(x),z(i))<\delta.
\]
This results in a sequence $(a(-n),\ldots,a(n))$ of natural numbers of odd length. Hence there is $m$ such that the sequence $(a(-n),\ldots,a(n))$ is listed as $a_m$ on our list. By our choice of $\delta$ and $n$ we have
\[
d'(\tau_\varphi^{k(m)}(x'),z)<\varepsilon.
\]
This proves that the orbit of $x'$ is dense in $X^\Z$.

Now we show that the system $(X^\mathbb{Z},\tau_\varphi)$ has the specification property. Fix $\varepsilon>0$. Choose $k>0$ and $\delta>0$ such that if $y\in X^\Z$ satisfies $d(y(i),z(i))<\delta$ for $|i|\leq k$, then $d'(y,z)<\varepsilon$. We claim that that $k(\varepsilon)=2k$ witnesses the specification property. Note that for $a<b\in\Z$ and $x,z\in X^\Z$ we have that
\begin{equation}\label{bliskosc}
    \parbox{0.8\linewidth}{\centering if$\quad z[a-k,b+k]=x[a-k,b+k],\quad$ then\\ $d'(\varphi_\tau^i(z),\varphi_\tau^i(x))<\varepsilon$ for every $a\le i<b$.}
\end{equation}

Take $n\in\mathbb{N}$ and $x_1,\ldots,x_n \in X^\Z$. Fix $a_1,b_1,\ldots,a_n,b_n$ satisfying $a_i-b_{i-1}\ge 2k$ for $2\leq i\leq n$. Consider a $2k$-spaced specification  $\left(\tau_\varphi^{[a_i,b_i)}(x_i)\right)_{1\leq i\leq n}$. Assume, without loss of generality, that $a_i-b_{i-1}= 2k$ for $2\leq i\leq n$. 
Now, find $z\in X^\Z$  such that for every $1\le j\leq n$ we have $z[a_j-k,b_j+k]=x_j[a_j-k,b_j+k]$. By \eqref{bliskosc}, this point $\varepsilon$-traces the specification $\left(\tau_\varphi^{[a_i,b_i)}(x_i)\right)_{1\leq i\leq n}$.
Hence $(X^\Z,\tau_\varphi)$ has the specification property.

It is clear that the association $(\varphi,x)\mapsto (\tau_\varphi,x')$  preserves topological conjugacy of pointed systems. It remains to show that if $(X,\varphi,x)$ and $(X,\psi,y)$ are two pointed transitive systems such that $(X^\mathbb{Z},\tau_\varphi,x')$ and $(X^\mathbb{Z},\tau_\psi,y')$  are conjugate, then $(X,\varphi,x)$ and $(X,\psi,y)$ are also conjugate. Suppose $\rho\colon X^\Z\to Y^\Z$ conjugates the two systems.

Note that the definition of $x'$ and $\tau_{\varphi}$ implies that the sequence $\tau_\varphi^{l(m)}(x')$, where $l(m)$ is the midpoint of $J_m$ converges to the sequence $\bar x$ in $X^\Z$ whose all entries are equal $x$, that is
\begin{equation}\label{lim:xxx}
    \lim_{m\to\infty} \tau_\varphi^{l(m)}(x')=\bar x= (\ldots,x,x,x,\ldots).
\end{equation}
      By the same reasoning as the one leading to \eqref{lim:xxx} we have that
    \begin{equation}\label{lim:yyy}
    \lim_{m\to\infty} \tau_\psi^{l(m)}(y')=\bar y= (\ldots,y,y,y,\ldots).
\end{equation}
and the sequence $\rho(\tau_\varphi^{l(m)}(x'))=\tau_\psi^{l(m)}(y')$
converges to the sequence $\rho(\bar x)$ as $m\to\infty$. By \eqref{lim:xxx} and \eqref{lim:yyy} we get $\rho(\bar x)=\bar y$. 
      Note also that the set of constant sequences in $X^\mathbb{Z}$ is an invariant subsystem of $(X^\Z,\tau_{\varphi})$ that is  conjugate to $(X,\varphi)$, because if $\bar z= (z(n))_{n\in\Z}$ is a constant sequence in $X^\Z$ with $z(n)=z\in X$ for every $n\in \Z$, then
\[
\tau_{\varphi}(\bar z)=\Bar{\varphi(z)}=(\ldots,\varphi(z),\varphi(z),\varphi(z),\ldots).
\]
Hence, if pointed transitive systems $(X^\mathbb{Z},\tau_\varphi,x')$ and $(Y^\mathbb{Z},\tau_\psi,y')$ are conjugate via $\rho$, then 
    the pointed subsystems generated by restricting $\tau_\varphi$, respectively $\tau_\psi$, to the closures $\overline{\orbit(\bar x)}$, respectively $\overline{\orbit(\bar y)}$, of orbits of sequences, respectively, $\bar x =(\ldots,x,x,x,\ldots)$ and $\bar y= (\ldots,y,y,y,\ldots)$ must also be conjugate. However, pointed systems $(\overline{\orbit(\bar x)},\tau_\varphi,\bar x)$, respectively, $(\overline{\orbit(\bar y)},\tau_\psi,\bar y)$ are conjugate to pointed systems $(X,\varphi,x)$ and $(Y,\psi,y)$ via a restriction of the diagonal action $\bar{\rho}=(\ldots,\rho,\rho,\rho,\ldots)$ induced by $\rho$ on $X^\Z$. 
\end{proof}

In particular, if $X$ is the Cantor set or the Hilbert cube, then since  $X^\Z$ is homeomorphic with $X$ we get that the topological conjugacy relation of pointed transitive $X$-systems is Borel bi-reducible with the topological conjugacy of pointed $X$-systems with the specification property. One direction is stated in Proposition \ref{trans-red-spec}  and other one follows from the fact that pointed systems with the specification property are transitive.

The existence of the latter reduction follows from Proposition \ref{trans-red-spec} and the fact that if $X$ is the Cantor set or the Hilbert cube, then $X^\Z$ is homeomorphic with $X$.    
     
\section{Pointed Cantor systems with the specification property}

   
In \cite{DG} Ding and Gu define the equivalence relation on the set of metrics on $\mathbb{N}$ defined by $d_1\mathrel{E_{\mathrm{sc}}}d_2$ if there exist a homeomorphism from the completion of $(\mathbb{N},d_1)$ to the completion of $(\mathbb{N},d_2)$ that is the identity on $\mathbb{N}$. The set of all metrics on $\mathbb{N}$ is here taken with the Polish space structure induced from $\mathbb{R}^{\mathbb{N}\times\mathbb{N}}$. By \cite[Proposition 2.2]{DG} $d_1\mathrel{E_{\mathrm{sc}}}d_2$ if and only if $(\mathbb{N},d_1)$ and $(\mathbb{N},d_2)$ have the same Cauchy sequences.

Ding and Gu consider the relation $E_{\mathrm{csc}}$, which is equal to $E_{\mathrm{sc}}$ restricted to the set $\mathbb{X}_\mathrm{cpt}$ of metrics on $\mathbb{N}$ whose completion is compact. The set $\mathbb{X}_\mathrm{cpt}$ is Borel in the space of all metrics on $\mathbb{N}$, see \cite{DG}. 

We write $\mathbb{X}_{0\textrm{-dim}}$ for the set of metrics on $\mathbb{N}$ whose completion is compact and zero-dimensional. The set $\mathbb{X}_{0\textrm{-dim}}$ is Borel in $\mathbb{X}_{\mathrm{cpt}}$ \cite[Proposition 5.1]{debs2018descriptive}. For every metric $d$ in $\mathbb{X}_{\mathrm{cpt}}$ we can embed $\mathbb{N}$ into the Cantor set as $(x_n)_{n\in\mathbb{N}}$ in such a way that mapping $n\mapsto x_n$ extends to a homeomorphism of the completion of $\mathbb{N}$ with respect to $d$ and the closure of $\{x_n: n\in\mathbb{N}\}$ in the Cantor set. Then two metrics $d,d'$ with associated sequences $(x_n)_{n\in\mathbb{N}}$ and $(x_n')_{n\in\mathbb{N}}$ are $E_{\mathrm{csc}}$-related if and only if the map $x_n\mapsto x_n'$ extends to a homeomorphism of the closures of $\{x_n: n\in\mathbb{N}\}$ and $\{x_n: n\in\mathbb{N}\}$ in the Cantor set.

\begin{proposition}\label{spec.red.to.csc}
    The conjugacy relation of pointed transitive Cantor systems 
    is Borel-reducible to the relation $E_{\mathrm{csc}}$ restricted to $\mathbb{X}_{0\textrm{-dim}}$.
\end{proposition}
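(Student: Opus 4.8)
The plan is to send a pointed transitive Cantor system to the metric on $\mathbb{N}$ recording the mutual distances along the forward orbit of the distinguished point, and then to invoke the characterization of $E_{\mathrm{csc}}$ on $\mathbb{X}_{0\textrm{-dim}}$ stated just above the proposition. First I would fix a compatible metric $\rho$ on the Cantor set $2^{\mathbb{N}}$ and represent pointed transitive Cantor systems as triples $(2^{\mathbb{N}},\varphi,x)$ with $\varphi\in\homeo(2^{\mathbb{N}})$ and $x$ having dense forward orbit; this gives a standard Borel parametrization. I would then observe that the forward orbit $\{\varphi^n(x):n\ge 0\}$ has pairwise distinct entries: if $\varphi^i(x)=\varphi^j(x)$ for some $i<j$, then applying $\varphi^{-i}$ shows $x$ is periodic and its forward orbit finite, contradicting density in the infinite space $2^{\mathbb{N}}$. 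Hence $d(m,n)=\rho(\varphi^m(x),\varphi^n(x))$ defines a genuine metric on $\mathbb{N}$, and the assignment $(2^{\mathbb{N}},\varphi,x)\mapsto d$ is Borel (in fact continuous in the relevant coordinates, since evaluation and composition on $\homeo(2^{\mathbb{N}})$ are continuous).

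Next I would check that $d\in\mathbb{X}_{0\textrm{-dim}}$. The map $n\mapsto\varphi^n(x)$ is an isometry of $(\mathbb{N},d)$ onto the forward orbit, so the completion of $(\mathbb{N},d)$ is homeomorphic to the closure of the forward orbit in $2^{\mathbb{N}}$, which equals $2^{\mathbb{N}}$ by transitivity. As $2^{\mathbb{N}}$ is compact and zero-dimensional, $d$ lies in $\mathbb{X}_{0\textrm{-dim}}$, and under the embedding described just before the proposition the sequence associated with $d$ is precisely $(\varphi^n(x))_{n\ge 0}$.

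Finally I would verify the reduction property using the characterization that two metrics with associated sequences $(\varphi^n(x))_{n\ge 0}$ and $(\psi^n(y))_{n\ge 0}$ are $E_{\mathrm{csc}}$-related if and only if the index-matched map $\varphi^n(x)\mapsto\psi^n(y)$ extends to a homeomorphism of the closures $X=2^{\mathbb{N}}$ and $Y=2^{\mathbb{N}}$. If $h$ conjugates $(X,\varphi,x)$ with $(Y,\psi,y)$, then $h(x)=y$ and $h(\varphi^n(x))=\psi^n(y)$ for all $n\ge 0$, so $h$ is exactly such an index-matched homeomorphism and the metrics are $E_{\mathrm{csc}}$-related. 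Conversely, given a homeomorphism $h$ with $h(\varphi^n(x))=\psi^n(y)$ for every $n\ge 0$, setting $n=0$ yields $h(x)=y$, while $h(\varphi(\varphi^n(x)))=\psi^{n+1}(y)=\psi(h(\varphi^n(x)))$ shows $h\varphi=\psi h$ on the forward orbit of $x$; since this orbit is dense, $h\varphi=\psi h$ holds everywhere by continuity, so $h$ is a pointed conjugacy. I expect the only delicate point to be exactly this matching: $E_{\mathrm{csc}}$ tests an \emph{index-matched} homeomorphism, and the argument hinges on density of the forward orbit, which is what lets the intertwining relation propagate from the orbit to all of $X$ while simultaneously forcing any witnessing homeomorphism to respect the enumeration.
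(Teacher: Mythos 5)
Your proposal is correct and follows essentially the same route as the paper's (much terser) proof: map $(X,\varphi,x)$ to the metric on $\mathbb{N}$ pulled back via $n\mapsto\varphi^n(x)$, with density of the forward orbit doing the work in both directions. The details you add — distinctness of orbit points, identification of the completion with $X$, and the density argument propagating the intertwining relation — are exactly the verifications the paper leaves implicit.
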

\begin{proof}
    Given a pointed transitive Cantor system 
    $(X,\varphi,x)$, map it to the metric on $\mathbb{N}$ induced on via the map $n\mapsto\varphi^n(x)$. This is a Borel reduction since the forward orbit of $x$ is dense in $X$.
\end{proof}

In order to gauge the complexity of the conjugation of pointed Cantor systems with the specification property, we need to gauge the complexity of $E_{\mathrm{csc}}$ restricted to $\mathbb{X}_{0\textrm{-dim}}$.

For a countable ordinal of the form $\omega^\alpha\cdot n$ Ding and Gu \cite{DG} define $\mathbb{X}_{{\omega^\alpha}\cdot n}$ as the set of metrics on $\mathbb{N}$ whose completion is homeomorphic to $\omega^{1+\alpha}\cdot n + 1$ with the order topology. In \cite[Question 4.11]{DG} the authors ask whether for every $\alpha<\omega_1$ and $n<\omega$ the relation $E_{\mathrm{csc}}$ restricted to $\mathbb{X}_{{\omega^\alpha}\cdot n}$ is reducible to $=^+$.

We will show below that a slightly stronger statement of Theorem \ref{ding.gu} is true. We will use the notion of Oxtoby systems defined by Williams \cite{williams1}. 
The standard definition of an Oxtoby sequence is slightly more general than the definition of a ternary Oxtoby sequence given below. 


\begin{definition}
    Let $X$ be a compact metric space and $(x_n)_{n\in\mathbb{N}}$ be a sequence of distinct elements of $X$. Let $y_0\notin X$ be arbitrary. Inductively on $i\in\mathbb{N}$ we define $z_i\in (X\cup\{y_0\})^{\mathbb{Z}}$. Put $z_0(j)=y_0$ for every $j\in\mathbb{Z}$. Given $z_{i}$, for every $k\in\mathbb{Z}$ write
    $$J(i,k)=\{j\in [k3^i,(k+1)3^i): z_{i}(j)=y_0\}.$$
    Put
    $$
z_{i+1}(j)=
\begin{cases}
x_{i+1}\quad\mbox{if }j\in J(i,k)\mbox{ and }k\equiv 0,2\ (\bmod\ 3),\\
z_{i}(j)\quad\mbox{ otherwise}.
\end{cases}
$$
Note that $z_n$ converges to a point in $X^{\mathbb{Z}}$ and the limit does not depend on the choice of $y_0$. The limit is denoted by $z(x_n)$. A sequence $z\in X^\mathbb{N}$ is called a ternary \textbf{Oxtoby sequence} if there exists a sequence $(x_n)_{n\in\mathbb{N}}$ of distinct elements of $X$ such that $z=z(x_n)$
   \end{definition}

Stated less formally, the ternary Oxtoby sequence is an element $z\in X^{\mathbb{Z}}$ defined inductively as follows. 
    First, for every $k\in \Z$ we put $z(3k)=z(3k+2)=x_1$. After $i$ steps of the construction, write $J(i,k)$ for the set of numbers  $j\in[k3^i,(k+1)3^i)$  at which $z(j)$ has not been defined during the first $i$ steps of the construction. In the step $i+1$ we put $z(j)=x_{i+1}$ for every $j\in J(i,k)$ with $k\equiv 0,2 \mod (3)$. More concisely, the ternary Oxtoby sequence can be defined as the sequence $z \in X^\Z$ satisfying
$z(j) = x_i$ if $j \equiv (\pm 3^{i-1} - 1)/2 \pmod{3^i}$. 

The above definition is a special case of the definition given in Williams \cite{williams1} of an Oxtoby sequence, where in place of $3^i$ one can take a fast growing sequence $p_i$, see also the discussion of symbolic Oxtoby sequences (class H4) in \cite{downar-survey}. In our case, note that for every $i\in\mathbb{N}$ and every $k\in\mathbb{Z}$ element $z_i$ assumes the value $y_0$ only on the middle point of the interval $[k3^i,(k+1)3^i)$. That is, the set $J(i,k)$ consists of a single element, which is the middle point of the interval $[k3^i,(k+1)3^i)$. To simplify notation, we write $j(i,k)$ for this unique element of $J(i,k)$. 
Also, since we will not need the more general notion of an Oxtoby sequence, we will refer to ternary Oxtoby sequences simply as to Oxtoby sequences below.

\begin{figure}[h!]
\centering
\begin{tikzpicture}
\usetikzlibrary{math}

\usetikzlibrary{decorations.pathreplacing}

\node (x0) at (-3 * 0.4, 0.0)    {$\cdot$} ;
\node (x0) at (-2 * 0.4, 0.0)    {$\cdot$} ;
\node (x0) at (-1 * 0.4, 0.0)    {$\cdot$} ;
\node (00) at ( 0 * 0.4, 0.0)    {} ;
\node (01) at ( 1 * 0.4, 0.0)    {} ;
\node (02) at ( 2 * 0.4, 0.0)    {} ;
\node (03) at ( 0.1, 0.0)    {$x_1$} ;
\node (04) at ( 0.7, 0.0)    {$x_3$} ;
\node (05) at ( 1.3, 0.0)    {$x_1$} ;
\node (06) at ( 1.9, 0.0)    {$x_1$} ;
\node (07) at ( 2.5, 0.0)    {$x_2$} ;
\node (08) at ( 3.1, 0.0)    {$x_1$} ;
\node (09) at ( 3.7, 0.0)    {$x_1$} ;
\node (10) at ( 4.3, 0.0)    {$x_2$} ;
\node (11) at ( 4.9, 0.0)    {$x_1$} ;
\node (12) at ( 5.5, 0.0)    {$x_1$} ;
\node (13) at ( 6.1, 0.0)    {$x_3$} ;
\node (14) at ( 6.7, 0.0)    {$x_1$} ;
\node (15) at ( 7.3, 0.0)    {$x_1$} ;
\node (16) at (16 * 0.4, 0.0)    {} ;
\node (17) at (17 * 0.4, 0.0)    {} ;
\node (18) at (18 * 0.4, 0.0)    {} ;
\node (19) at (19 * 0.4, 0.0)    {} ;
\node (20) at (7.8, 0.0)    {$\cdot$} ;
\node (20) at (8.2, 0.0)    {$\cdot$} ;
\node (20) at (8.6, 0.0)    {$\cdot$} ;
\node      at (3.7, -1.15) {0}       ;
\draw[-] (-5 * 0.4, 0.3) -- (25 * 0.4, 0.3) ;
\draw[-] (-5 * 0.4, -0.3) -- (25 * 0.4, -0.3) ;
\draw[-] (2.2, -0.3) -- (2.2, 0.3) ;
\draw[-] (3.4, -0.3) -- (3.4, 0.3) ;
\draw[-] (5.8, -0.3) -- (5.8, 0.3) ;
\draw[-] (4.6, -0.3) -- (4.6, 0.3) ;
\draw[-] (7, -0.3) -- (7, 0.3) ;
\draw[-] (1, -0.3) -- (1, 0.3) ;
\draw[-] (-0.2, -0.3) -- (-0.2, 0.3) ;
\draw[-] (2.8, -0.3) -- (2.8, 0.3) ;
\draw[-] (4, -0.3) -- (4, 0.3) ;
\draw[-] (5.2, -0.3) -- (5.2, 0.3) ;
\draw[-] (6.4, -0.3) -- (6.4, 0.3) ;
\draw[-] (0.4, -0.3) -- (0.4, 0.3) ;
\draw[-] (1.6, -0.3) -- (1.6, 0.3) ;
\draw[-] (7.6, -0.3) -- (7.6, 0.3) ;
\draw[->] (3.7, -0.3-0.5) -- (3.7, 0.3-0.75) ;

\end{tikzpicture}
\caption{An Oxtoby sequence.}
\end{figure}
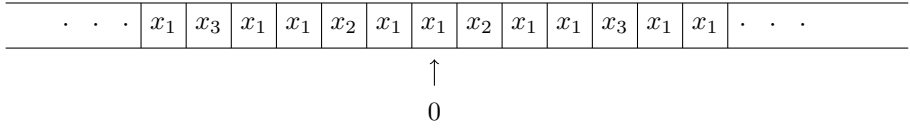

\begin{definition}
    Let $X$ be a compact metric space. An \textbf{Oxtoby system} is a subsystem of $(X^{\mathbb{Z}},\sigma)$ which is equal to $\overline{O}(z(x_n))$ for some sequence $(x_n)$ of distinct elements of $X$.
\end{definition}

The maximal equicontinuous factor of an Oxtoby system can be described quite explicitly. Suppose $z\in X^{\mathbb{Z}}$ is an Oxtoby sequence. The maximal equicontinuous factor of $\overline{O}(z)$ is $G=\varprojlim \mathbb{Z}_{3^n}$ by \cite[Theorem 2.2]{williams1}. More precisely, if we denote the element $(1,1,\dots)$ by $\Bar{1}$ and write $\widehat{1}$ for the automorphism of $G$ defined as $\widehat{1}(g)=g+\Bar{1}$, then $(G,\widehat{1})$ is the maximal equicontinuous factor of $(\overline{O}(z),S)$. Write $$A^r_i=\{\sigma^s z:s\equiv r\ (\bmod\  3^i)\}$$ where $0 \leq r< 3^i$. Williams  showed that \cite[Lemma 2.3(i)]{williams1} $\{\overline{A^r_i}: 0\le r<3^i\}$ is a partition of $\overline{O}(z)$ into clopen sets for all $i\in\mathbb{N}$ and  that \cite[Lemma 2.3(iv)]{williams1} the map $\pi \colon(\overline{O}(z),\sigma) \rightarrow (G,\widehat{1})$ defined so that \[\pi^{-1}(\{(r_i)\})= \bigcap_i \overline{A^{r_i}_i}\]  is a factor map. We refer to $\pi$ as to the \textbf{canonical maximal equicontinuous factor map}.

Below, given a subset $A\subseteq\mathbb{Z}$ and $k\in\mathbb{Z}$ we write $kA=\{ka:a\in A\}$ and $A+k=\{a+k:a\in A\}$.

\begin{claim}\label{reszta}
    If $z$ is an Oxtoby sequence, $i\in\mathbb{N}$, and $r\in[0,3^i)$, then for every $x\in A^r_i$ we have $\mathrm{Per}_{3^i}(x)=\mathrm{Per}_{3^i}(\sigma^rz)$.
\end{claim}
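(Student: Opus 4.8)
The plan is to reduce the claim to two elementary facts about the sets $\mathrm{Per}_p$, neither of which uses the combinatorics of the Oxtoby construction; the Oxtoby sequence $z$ enters only through the definition of $A^r_i$ as a set of shifts of $z$.

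First I would record that for \emph{any} bi-infinite sequence $w$ and \emph{any} $p\ge 1$ the set $\mathrm{Per}_p(w)$ is a union of complete residue classes modulo $p$. Indeed, $n\in\mathrm{Per}_p(w)$ says exactly that $w$ is constant on the progression $n+p\mathbb{Z}$, and this progression, hence the condition, depends only on $n \bmod p$. In particular $\mathrm{Per}_p(w)+p=\mathrm{Per}_p(w)$. Next I would establish the shift–translation identity $\mathrm{Per}_{3^i}(\sigma^s z)=\mathrm{Per}_{3^i}(z)-s$ for every $s\in\mathbb{Z}$, by a change of variables: with the convention $(\sigma^s z)(m)=z(m+s)$, the requirement that $z(m+s)=z(n+s)$ hold for all $m\equiv n \pmod{3^i}$ becomes, after the substitution $m'=m+s$, $n'=n+s$, the requirement that $z(m')=z(n')$ hold for all $m'\equiv n'\pmod{3^i}$, that is, $n+s\in\mathrm{Per}_{3^i}(z)$.

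Finally I would combine the two. Any $x\in A^r_i$ is of the form $x=\sigma^s z$ with $s\equiv r\pmod{3^i}$, so the identity gives $\mathrm{Per}_{3^i}(x)=\mathrm{Per}_{3^i}(z)-s$ and $\mathrm{Per}_{3^i}(\sigma^r z)=\mathrm{Per}_{3^i}(z)-r$. Since $s-r\in 3^i\mathbb{Z}$ and $\mathrm{Per}_{3^i}(z)$ is a union of residue classes modulo $3^i$ by the first observation, translating it by $s$ and by $r$ yields the same set, so the two right-hand sides coincide. The only place demanding care is the bookkeeping of the shift convention in the change of variables; there is no genuine obstacle here, and it is worth emphasizing that the periodic fine structure of $z$ plays no role in this particular statement (it is needed elsewhere, not in this claim).
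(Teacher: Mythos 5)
Your proof is correct, and it differs from the paper's argument in one meaningful respect. Both proofs share the same skeleton: the shift--translation identity $\mathrm{Per}_{3^i}(\sigma^s z)=\mathrm{Per}_{3^i}(z)-s$ together with the invariance $\mathrm{Per}_{3^i}(z)+3^i\mathbb{Z}=\mathrm{Per}_{3^i}(z)$, which jointly reduce the claim to the congruence $s\equiv r\ (\bmod\ 3^i)$. The difference is how the invariance is justified. The paper derives it from the Oxtoby-specific computation $\mathrm{Per}_{3^i}(z)=\bigl([0,3^i)\setminus\{j(i,0)\}\bigr)+3^i\mathbb{Z}$, i.e., from the fact that at level $i$ the $3^i$-periodic part of $z$ is exactly the complement of the residue class of the hole $j(i,0)$. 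You instead observe that for \emph{any} bi-infinite sequence $w$ and any $p$ the set $\mathrm{Per}_p(w)$ is a union of complete residue classes modulo $p$ --- a general fact requiring no information about $z$ --- and your verification of this fact and of the shift identity is sound under the paper's convention $\sigma(x)(n)=x(n+1)$. Your route is more economical and, as you emphasize, shows the claim is really a statement about arbitrary sequences, with the Oxtoby combinatorics playing no role. What the paper's route buys is the explicit description of $\mathrm{Per}_{3^i}(z)$ itself, which is not gratuitous: that description (specifically, that $\mathrm{Per}_{3^i}(z)$ and $\{j(i,0)\}$ are complementary on $[0,3^i)$) is exactly what is invoked shortly afterwards in the proof of Lemma \ref{oxt}, so the paper is recording once a fact it needs to reuse. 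Either argument is a complete proof of Claim \ref{reszta}.
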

\begin{proof}
    Note that $\mathrm{Per}_{3^i}(z)=([0,3^i)\setminus\{j(i,0)\})+3^i\mathbb{Z}$ and for every $s\in\mathbb{Z}$ we have $\mathrm{Per}_{3^i}(\sigma^s z)=\mathrm{Per}_{3^i}(z){\color{cyan}-}s$. Since $\mathrm{Per}_{3^i}(z)+3^i\mathbb{Z}=\mathrm{Per}_{3^i}(z)$, this implies that if $s\equiv r\ (\bmod\ 3^i)$, then $\mathrm{Per}_{3^i}(\sigma^s z)=\mathrm{Per}_{3^i}(\sigma^r z)$, which ends the proof.
\end{proof}

\begin{lemma}\label{oxt}
    Let $X$ be a compact metric space, $(x_n)_{n\in\mathbb{N}}$ be a sequence of distinct elements of $X$ and $z(x_n)$ be the associated Oxtoby sequence. Write $\pi\colon \overline{O}(z(x_n))\to G$ for the canonical maximal equicontinuous factor map. Let  $u\in\overline{O}(z(x_n))$ be a non-Toeplitz sequence and let $\pi(u)=(r_n)$. If $k\in\mathrm{Aper}(u)$, then there exists $i_0\in\mathbb{N}$ such that for all $i\geq i_0$ we have $$(\sigma^{r_i}(z(x_n)))(k)=x_{i+1}.$$
\end{lemma}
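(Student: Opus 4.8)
The plan is to translate the hypothesis $k\in\mathrm{Aper}(u)$, via the factor map $\pi$ and Claim \ref{reszta}, into an explicit congruence on the coordinates $r_i$ of $\pi(u)$, and then to read the value $(\sigma^{r_i}z)(k)=z(k+r_i)$ directly off the construction of $z=z(x_n)$. The point to keep in mind throughout is that the conclusion concerns the value of $z$ at the \emph{shifting} position $k+r_i$, not the value $u(k)$; this is precisely why the statement is consistent with the $x_{i+1}$ being pairwise distinct.

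First I would pin down the periodic part of $u$. Since $\pi(u)=(r_n)$, we have $u\in\overline{A^{r_i}_i}$ for every $i$. For any $s\equiv r_i \pmod{3^i}$, the computation in the proof of Claim \ref{reszta} gives $\mathrm{Per}_{3^i}(\sigma^s z)=\mathrm{Per}_{3^i}(z)-s=\mathrm{Per}_{3^i}(z)-r_i$, the last equality because $\mathrm{Per}_{3^i}(z)$ is a union of residue classes modulo $3^i$. For a fixed $c\in\mathrm{Per}_{3^i}(z)-r_i$ and $m\in\mathbb{Z}$ the condition $x(c)=x(c+3^i m)$ is clopen and holds on $A^{r_i}_i$, hence on $\overline{A^{r_i}_i}$, so it holds for $u$; thus $\mathrm{Per}_{3^i}(z)-r_i\subseteq\mathrm{Per}_{3^i}(u)$. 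Now if $k\in\mathrm{Aper}(u)$, then $k\notin\mathrm{Per}_{3^i}(u)$ for every $i$, so $k\notin\mathrm{Per}_{3^i}(z)-r_i$, that is, $k+r_i\notin\mathrm{Per}_{3^i}(z)=([0,3^i)\setminus\{j(i,0)\})+3^i\mathbb{Z}$. Since $j(i,0)=(3^i-1)/2$, this yields $k+r_i\equiv (3^i-1)/2 \pmod{3^i}$ for every $i\in\mathbb{N}$.

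The second step is the elementary observation that this congruence determines $r_i$ outright once $i$ is large. Indeed, choose $i_0$ with $3^{i_0}>2|k|+1$. Then for $i\ge i_0$ we have $0\le (3^i-1)/2-k<3^i$, so the unique representative of $r_i$ in $[0,3^i)$ is $r_i=(3^i-1)/2-k$, whence $k+r_i=(3^i-1)/2=j(i,0)$. The centre $j(i,0)$ has block index $0\equiv 0 \pmod 3$, so by the construction of $z$ it carries the symbol $x_{i+1}$; equivalently, $z(j)=x_{i+1}$ whenever $j\equiv (3^i-1)/2 \pmod{3^{i+1}}$. Therefore $(\sigma^{r_i}z)(k)=z(k+r_i)=x_{i+1}$ for all $i\ge i_0$, as required.

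The only real obstacle is the first step: recognising that membership in $\overline{A^{r_i}_i}$ forces $u$ to be $3^i$-periodic on all of $\mathrm{Per}_{3^i}(z)-r_i$, so that at level $i$ the \emph{single} residue class on which $u$ can be aperiodic is exactly the one occupied by the level-$i$ centres of $z$. Once this is in place the rest is bookkeeping modulo powers of $3$. Note that the non-Toeplitz hypothesis on $u$ is used only to guarantee $\mathrm{Aper}(u)\neq\emptyset$, so that a point $k$ as in the statement exists at all.
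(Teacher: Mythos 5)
Your proof is correct, and it diverges from the paper's own argument in a useful way. Both share the same skeleton: transfer the periodic structure of $z$ to $u$, so that the only residue class mod $3^i$ on which $u$ can fail to be $3^i$-periodic is the class of the level-$i$ centres, and then read off $z(j(i,0))=x_{i+1}$ from the construction. Where you differ is in how $k$ gets localized at $j(i,0)-r_i$. The paper first proves a separate claim that $(r_i)$ and $(3^i-r_i)$ both tend to infinity --- this is exactly where it uses the non-Toeplitz hypothesis, via the fact that an eventually constant $(r_i)$ would force $u$ to be a shift of $z$ --- and then runs a complementarity argument inside the window $[-r_i,3^i-r_i)$. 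You instead extract the congruence $k+r_i\equiv(3^i-1)/2\pmod{3^i}$ for \emph{every} $i$, and note that once $3^i>2|k|+1$ the normalization $r_i\in[0,3^i)$ pins down $r_i=(3^i-1)/2-k$ exactly. This buys you three things: you never need the divergence claim (nor the identification of $\pi$-fibres over Toeplitz points that its proof implicitly relies on); you get an explicit $i_0$ and an explicit formula for $r_i$; and, as you correctly observe, the non-Toeplitz hypothesis reduces to the tautology that $\mathrm{Aper}(u)\neq\emptyset$. You are also more careful than the paper on one point: the paper's Claim on $\mathrm{Per}_{3^i}$ is stated for points of $A^{r_i}_i$ yet is applied to $u$, which lies only in $\overline{A^{r_i}_i}$ (being non-Toeplitz, $u$ is never an actual shift of $z$); your closure argument supplies precisely the inclusion $\mathrm{Per}_{3^i}(z)-r_i\subseteq\mathrm{Per}_{3^i}(u)$ that is needed, which is the direction that survives passage to limits.

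One small correction: the condition $x(c)=x(c+3^im)$ defines a \emph{closed}, not clopen, subset of $X^{\Z}$ for a general compact metric $X$ (the diagonal of $X\times X$ is open only when $X$ is discrete). Closedness is all your argument uses --- the condition holds on $A^{r_i}_i$, hence on its closure --- so nothing breaks, but the word should be ``closed''.
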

\begin{proof}
     Write $z$ for $z(x_n)$. Fix $k\in\Z$. 
     \begin{claim}\label{tend}
         Both $(r_i)$ and $(3^i-r_i)$ tend to infinity.
     \end{claim}
     
     \begin{proof}
          First note that both sequences $(r_i)$ and $(3^i-r_i)$ are non-decreasing. This follows from the fact that $ r_i\in[0,3^{i})$, $r_{i+1}\equiv r_i\ (\bmod\ 3^{i})$, and $3^i|3^{i+1}$. Now, suppose $(r_i)$ does not tend to infinity. Then $(r_i)$ is eventually constant, say equal to $s$. Hence $u=\sigma^s(z)$, which is a Toeplitz sequence, a contradiction. Similarly, suppose $(3^i-r_i)$ does not converge to infinity. Then $(3^i-r_i)$ is eventually constant, say equal to $s$. Then $u=\sigma^{-s}(z)$ which is also a Toeplitz sequence, a contradiction.
     \end{proof}
     
     Thus, by Claim \ref{tend} there exists $i_0$ such that for all $i\geq i_0$ we have 
      \begin{equation}\label{nierownosc}
                -r_i<k<3^i-r_i.
\end{equation}

    We claim that this $i_0$ works, that is for all $i\geq i_0$ we have $(\sigma^{r_i}(z(x_n)))(k)=x_{i+1}$. Fix $i\geq i_0$.


    Note that  $z(j(i,0))=x_{i+1}$, and thus 
    \begin{equation}\label{rownosc}
    (\sigma^{r_i}z)(j(i,0)-r_i)=x_{i+1}.      
    \end{equation}

    By Claim \ref{reszta}  we have ${\rm Per}_{3^i}(u)={\rm Per}_{3^i}(\sigma^{r_i}(z))$  and thus 
    \begin{equation}\label{ograniczone}
        {\rm Per}_{3^i}(u)\cap [-r_i,3^i-r_i)={\rm Per}_{3^i}(\sigma^{r_i}z)\cap [-r_i,3^i-r_i).
    \end{equation}

    However, by definition, ${\rm Per}_{3^i}(z)$ and $\{j(i,0)\}$ are complementary on $[0,3^i)$,
     so 
            \begin{equation}\label{pusty}
                {\rm Per}_{3^i}(\sigma^{r_i}z)\mbox{ and }\{j(i,0)-r_i\}\mbox{ are complementary on }[-r_i,3^i-r_i).
            \end{equation} 
            Thus, by (\ref{pusty}) and (\ref{ograniczone})  we have
            \begin{equation}\label{przekroj}
                {\rm Per}_{3^i}(u)
            \mbox{ and }\{j(i,0)-r_i\}\mbox{ are complementary on } [-r_i,3^i-r_i))
            \end{equation}
             and by (\ref{przekroj}) we have 
            \begin{equation}\label{inkluzja}
                {\rm Aper}(u)\cap [-r_i,3^i-r_i)\subseteq \{j-r_i:j\in J(i,0)\}.
            \end{equation}
            By (\ref{nierownosc}), (\ref{rownosc}) and (\ref{inkluzja}) we have that $\sigma^{r_i}z(k)=x_{i+1}$ as needed. 
\end{proof}

Now we prove Theorem \ref{ding.gu}.  A similar argument is also used in the paper of the third author with Li \cite{LiPeng}
in order to compute the complexity of the conjugacy relation of pointed minimal compact systems.


\begin{proof}[Proof of Theorem \ref{ding.gu}]
    It is enough to show that $E_{\mathrm{csc}}|\mathbb{X}_{0\textrm{-dim}}$ is Borel bi-reducible with the topological conjugacy of pointed minimal Cantor systems, as the latter has the same complexity as $=^+$ by \cite{KayaSubshifts}.

    One reduction is clear. Given a pointed minimal Cantor system $(X,x,\varphi)$ we identify the forward orbit  of $x$ with $\mathbb{N}$ and endow it with the metric inherited form $X$. It is clear that two pointed minimal Cantor systems are conjugate if and only if the corresponding metrics are $E_{\mathrm{csc}}$-related.

    Now we describe a reduction of $E_{\mathrm{csc}}|\mathbb{X}_{0-\textrm{dim}}$ to the topological conjugacy of pointed minimal Cantor systems. Note that to every metric $d\in \mathbb{X}_{0-\textrm{dim}}$ we can associate in a Borel way a sequence $(x_n(d))_{n\in\mathbb{N}}$ of distinct elements of the Cantor set such that for every $d_1,d_2\in \mathbb{X}_{0-\textrm{dim}}$ the condition $d_1 \mathrel{E_{\mathrm{csc}}} d_2$ holds if and only if the map $x_n(d_1)\mapsto x_n(d_2)$ extends to a homeomorphism from $\overline{\{x_n(d_1):n\in\mathbb{N}\}}$ to $\overline{\{x_n(d_2):n\in\mathbb{N}\}}$. 
    
    Given a sequence $(x_n)$ of distinct elements of  the Cantor space we consider the Oxtoby system $\overline{O}((x_n))$. Note that the underlying space of the Oxtoby system is zero-dimensional and has no isolated points since the system is minimal. Thus, $\overline{O}(z(x_n))$ is a minimal Cantor system.

    We claim that the assignment $d\mapsto \overline{O}(z(x_n{(d)}))$ is a Borel 
    reduction of $E_{\mathrm{csc}}|\mathbb{X}_{0\textrm{-dim}}$ to the topological conjugacy of pointed Cantor minimal systems.

    ($\Rightarrow$) First, let $(x_n)$ and $(y_n)$ be two sequences of distinct elements of the Cantor space and suppose that  $\overline{O}(z(x_n))$ is conjugate to $\overline{O}(z(y_n))$ via a conjugacy  $\psi$ that sends $z(x_n)$ to $z(y_n)$. We need to show that that $(x_n)
    \mathrel{E_{\mathrm{csc}}}(y_n)$. Let $(n_i)$ be a sequence of natural numbers. By \cite[Proposition 2.2]{DG} we need to show that $x_{n_i}$ converges if and only if $y_{n_i}$ converges. Suppose  that  $x_{n_i}$ converges.  We will show that $y_{n_i}$. The other direction is analogous.

    Note that $z(x_n)$ is constructed from $(x_n)$ the same way as $z(y_n)$ is constructed from $(y_n)$. Furthermore, the systems $\overline{O}(z((x_n)))$ and $\overline{O}(z((y_n)))$ have the same equicontinuous factor $G=\varprojlim \mathbb{Z}_{3^n}$ \cite[Theorem 2.2]{williams1}. Write $\pi_1\colon \overline{O}(z((x_n)))\to G$ and $\pi_2\colon \overline{O}(z((y_n)))\to G$ for the canonical maximal equicontinuous factor maps.

    Choose a non-Toeplitz word $u\in\overline{O}(z(x_n))$ and write $\pi_1(u)=(r_n)$. We will show that $\sigma^{r_{n_i}}z(x_n)$ converges as $i\to\infty$.

    Note that if $l\in\mathrm{Per}(u)$, then $\sigma^{r_{n_i}}z(x_n)(l)$ stabilizes on $u(l)$, and hence converges. On the other hand if $k\in\mathrm{Aper}(u)$, then by Lemma \ref{oxt} we have $\sigma^{r_{n_i}}z(x_n)(k)=x_{n_i}$ for large enough $i$, and hence the sequence $\sigma^{r_{n_i}}z(x_n)(k)$ converges as $i\to\infty$. Thus  for every $m\in\mathbb{Z}$ the sequence $\sigma^{r_{n_i}}z(x_n)(m)$ converges and hence the sequence $\sigma^{r_{n_i}}z(x_n)$ converges as $i\to\infty$.
    It follows that $\sigma^{r_{n_i}}z(y_n)=\psi(\sigma^{r_{n_i}}z(x_n))$ also converges as $i\to\infty$.

Find a non-Toeplitz word $t\in \overline{O}(z((y_n)))$ such that $\pi_2(t)=(r_n)$ and let $k\in\mathrm{Aper}(t)$.  By Lemma \ref{oxt} we have $\sigma^{r_{n_i}}z(y_n)(k)=y_{n_i}$ for large enough $i$, and this sequence must converge. 
    
    ($\Leftarrow$) Let $(x_n)$ and $(y_n)$ be two sequences of elements of the Cantor space and suppose that  $(x_n)
    \mathrel{E_{\mathrm{csc}}}(y_n)$. We need to show that $\overline{O}(z(x_n))$ is conjugate to $\overline{O}(z(y_n))$ via a conjugacy sending $z(x_n)$ to $z(y_n)$. Note that for every sequence $(n_i)$ we have that $\sigma^{n_i}(z((x_n)))$ converges as $i\to\infty$ if and only if $\sigma^{n_i}(z((x_n)))$ converges as $i\to\infty$, because $z((x_n)))$ is constructed from $(x_n)$ the same way as $z(y_n)$ is constructed from $(y_n)$. Thus, we can extend the map such that $\sigma^{k}(z((x_n)))\mapsto \sigma^{k}(z((y_n)))$ for every $k\in\Z$ to a conjugacy of $\overline{O}(z((x_n)))$ and $\overline{O}(z((y_n)))$ that sends $z((x_n))$ to $z((y_n))$.
\end{proof}
\begin{corollary}
    The conjugacy relation of pointed Cantor systems with the specification property is bi-reducible with $=^+$.
\end{corollary}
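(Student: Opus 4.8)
The plan is to obtain the two Borel reductions witnessing bi-reducibility by concatenating results already established, relying on two elementary facts. First, the specification property implies transitivity \cite[Theorem 5(4)]{kwietniak2016panorama}, so that the pointed Cantor systems with the specification property form a subclass of the pointed transitive Cantor systems, and the conjugacy of pointed Cantor systems with specification is simply the restriction of the conjugacy of pointed transitive Cantor systems to that subclass. Second, the Cantor set $X$ satisfies $X^\Z\cong X$, which is exactly the fact used in the remark following Proposition \ref{trans-red-spec}.

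For the reduction to $=^+$, I would use that the conjugacy of pointed Cantor systems with specification is a restriction of the conjugacy of pointed transitive Cantor systems to a subclass; hence the reduction of Proposition \ref{spec.red.to.csc} restricts to a Borel reduction of the former into $E_{\mathrm{csc}}$ restricted to $\mathbb{X}_{0\textrm{-dim}}$. Composing with Theorem \ref{ding.gu}, which gives $E_{\mathrm{csc}}\restriction\mathbb{X}_{0\textrm{-dim}}\leq_B =^+$, yields that the conjugacy of pointed Cantor systems with the specification property is Borel-reducible to $=^+$.

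For the reverse reduction, I would invoke that $=^+$ is Borel bi-reducible with the topological conjugacy of pointed minimal Cantor systems (by \cite{KayaSubshifts}, as used in the proof of Theorem \ref{ding.gu}). Since minimal systems are transitive, the conjugacy of pointed minimal Cantor systems reduces, by passing to the larger class, to the conjugacy of pointed transitive Cantor systems. Finally, Proposition \ref{trans-red-spec} applied with $X$ the Cantor set, together with $X^\Z\cong X$, reduces the conjugacy of pointed transitive Cantor systems to the conjugacy of pointed Cantor systems with the specification property. Chaining these three reductions gives $=^+\leq_B$ the conjugacy of pointed Cantor systems with specification, and together with the previous paragraph establishes the asserted bi-reducibility.

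There is no genuinely hard step here: all the analytic content is already packaged in Theorem \ref{ding.gu} (through the Oxtoby-system construction) and in the specification-producing construction of Proposition \ref{trans-red-spec}. The only points requiring care are the verifications that the class inclusions (specification $\Rightarrow$ transitive, and minimal $\Rightarrow$ transitive) induce honest Borel reductions of the corresponding restricted conjugacy relations, and the observation that $X^\Z$ is homeomorphic to $X$ for the Cantor set, so that the target of Proposition \ref{trans-red-spec} remains inside the class of Cantor systems.
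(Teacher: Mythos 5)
Your proposal is correct and follows essentially the same route as the paper: the forward reduction is Proposition \ref{spec.red.to.csc} composed with Theorem \ref{ding.gu}, and the reverse is Kaya's result together with the inclusion of minimal into transitive systems and Proposition \ref{trans-red-spec} applied to the Cantor set (using $X^\Z\cong X$). The only difference is that you spell out the routine verifications that class inclusions and restrictions of reductions remain Borel reductions, which the paper leaves implicit.
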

\begin{proof}
    The fact that  $=^+$ is reducible to the conjugacy relation of pointed Cantor systems with the specification property follows from the result of Kaya \cite{Kaya}, the fact that every minimal system is transitive and Proposition \ref{trans-red-spec}. The other reduction follows from Theorem \ref{ding.gu} and Proposition \ref{spec.red.to.csc}.
\end{proof}
\section{Pointed transitive Hilbert cube systems}

In this section we first look at those pointed transitive Hilbert cube systems which are transitive subsystems of the full shift.

\begin{theorem}
    The action of the group $\Aut((\hilbert)^\Z)$ on the set \[\{x\in(\hilbert)^\Z: x\text{ is transitive in $((\hilbert)^{\mathbb{Z}},\sigma)$}\}\]  
    is turbulent.
\end{theorem}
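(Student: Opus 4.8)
The plan is to verify directly the three defining conditions of turbulence for the continuous action of the Polish group $G=\Aut((\hilbert)^\Z)$ on the Polish space $X=\{x\in(\hilbert)^\Z: x\text{ is transitive}\}$; here $X$ is a dense $G_\delta$ subset of $(\hilbert)^\Z$ (hence Polish), $G$ is a closed subgroup of $\homeo((\hilbert)^\Z)$ (hence Polish), and $G$ maps $X$ into $X$ because an automorphism carries a dense orbit to a dense orbit. Two elementary remarks organize the argument. First, $\sigma\in G$, so for transitive $x$ the orbit $Gx$ contains $\{\sigma^n x:n\in\Z\}$, which is dense; hence every orbit is dense in $(\hilbert)^\Z$, and therefore in $X$, which gives the first condition. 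Second, the action is \emph{free} on $X$: if $\varphi\in G$ fixes a transitive point $x$, then $\varphi(\sigma^n x)=\sigma^n\varphi(x)=\sigma^n x$ for all $n$, so $\varphi$ fixes the dense forward orbit of $x$ and hence $\varphi=\mathrm{id}$ by continuity.

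For the second condition (every orbit meager) I would invoke the Effros--Becker--Kechris theorem: for a continuous action of a Polish group, an orbit is non-meager in its closure if and only if the orbit map $g\mapsto g\cdot x$ is open onto the orbit. Since $x$ is transitive and $(\hilbert)^\Z$ is perfect, $x$ is recurrent, so there are $n_k\to\infty$ with $\sigma^{n_k}x\to x$. By freeness, the unique element of $G$ taking $x$ to $\sigma^{n_k}x$ is $\sigma^{n_k}$. As $d(\sigma^{n},\mathrm{id})$ is bounded below by a positive constant for all $n\neq 0$ (one can always place two far-apart values at coordinates $0$ and $n$), the elements $\sigma^{n_k}$ do not converge to $\mathrm{id}$, although $\sigma^{n_k}x\to x$. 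Thus the inverse of the orbit map is discontinuous at $x$, the orbit map is not open, and so $Gx$ is meager in $\overline{Gx}=X$.

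The third condition---every local orbit is somewhere dense---is the heart of the matter. Fix transitive $x$, a basic neighborhood $U=\{y\in X:\rho(y(i),x(i))<\varepsilon\text{ for }|i|\le N\}$ and $V=\{g\in G:d(g,\mathrm{id})<\delta\}$. The engine is the flexibility of the Hilbert cube. Embedding $\homeo(\hilbert)$ into $G$ diagonally by $h\mapsto h^{\Z}$, where $h^\Z(y)(n)=h(y(n))$, produces automorphisms that lie in $V$ as soon as $h$ is uniformly small. Since finite subsets of $\hilbert$ are $Z$-sets, the homeomorphism extension theorem for $Z$-sets \cite{vanmill} lets us realize arbitrarily small, independent displacements of the finitely many tracked values $x(i)$, $|i|\le N$, by uniformly small homeomorphisms of $\hilbert$; chaining such homeomorphisms along arcs in the arcwise connected space $\hilbert$ keeps every step in $V$ and the tracked coordinates inside $U$, and reaches a dense subset of the box $\prod_{|i|\le N}B(x(i),\varepsilon')$. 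As each $h^\Z(x)$ is again transitive, the local orbit $\orbit(x,U,V)$ is then dense in the nonempty open set $\{y\in X:\rho(y(i),x(i))<\varepsilon'\text{ for }|i|\le N\}$, which is the desired somewhere-density.

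The hard part will be this local orbit condition, and within it two technical points deserve care. First, the diagonal homeomorphisms preserve any coincidence $x(i)=x(j)$ among the tracked coordinates and so cannot by themselves be dense in an open box; to remove this obstruction I would prepend one small finite-range automorphism (a \emph{triangular} sliding-block map, invertible by construction and close to the identity) that separates the finitely many equal tracked values, which is possible because distinct coordinates of the aperiodic point $x$ have distinct neighborhoods of some finite radius. Second, one must check that the small homeomorphisms of $\hilbert$ can be chained so that both the step size (measured in $V$) and the excursion of the tracked coordinates (measured in $U$) stay controlled; this is precisely the local micro-transitivity of $\homeo(\hilbert)$ on tuples of distinct points, powered by $Z$-set unknotting. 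Granting the three conditions, the action is turbulent by definition, and by the Hjorth turbulence theorem \cite{hjorth2000classification} the induced equivalence relation is not classifiable by countable structures.
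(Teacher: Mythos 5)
Your verification of the first two turbulence conditions is correct. The dense-orbit argument is the same as the paper's (both rest on $\sigma\in\Aut((\hilbert)^\Z)$). For meagerness of orbits you take a genuinely different route: the paper argues by hand, fixing a recurrence sequence $\sigma^{n_k}x\to x$, noting that the set $K=\{y:\sigma^{n_k}(y)\to y\}$ is invariant under $\Aut((\hilbert)^\Z)$ and contains the orbit of $x$, and then showing $K$ is meager because it misses the intersection of two explicit comeager sets (points whose coordinates along $(n_{2k})$ accumulate at one fixed point of $\hilbert$ and whose coordinates along $(n_{2k+1})$ accumulate at another). You instead observe that the action is free on transitive points and apply Effros' theorem to the orbit map, using the same recurrence together with the uniform bound $d(\sigma^n,\mathrm{id})\ge c>0$ for $n\ne 0$. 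Both arguments are valid; the paper's is elementary and self-contained, yours is shorter at the cost of invoking Effros, and the freeness observation is correct and well used.

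The gap is in the third condition, at exactly the step you flag yourself. Your repair of the coincidence obstruction rests on the assertion that a ``triangular sliding-block map'' is ``invertible by construction,'' and over $\Z$ this is false under its natural reading: a block code $\Phi(y)(n)=F(y(n),y(n+1))$ all of whose sections $F(\cdot,b)$ are homeomorphisms of the alphabet (a permutive code) need not be injective --- on $\{0,1\}^\Z$ the rule $F(a,b)=a+b \pmod 2$ is of this form and is exactly two-to-one --- and the obvious inversion $y(n)=F(\cdot,y(n+1))^{-1}(z(n))$ is an infinite regress, since recovering $y(n)$ presupposes knowing $y(n+1)$. So the separating automorphism does not exist ``by construction''; producing \emph{any} non-diagonal element of $\Aut((\hilbert)^\Z)$ that is close to the identity is itself the mathematical content of this step. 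Such elements do exist, but they must be built: for instance, split the alphabet as $\hilbert\cong\hilbert\times\hilbert'$ where the factor $\hilbert'$ (say, the coordinates of index $>k$ in $\hilbert=[0,1]^{\mathbb{N}}$) has diameter at most $2^{-k}$, and let $\Psi$ fix the first factor while shifting the second; then $\Psi$ commutes with $\sigma$, is invertible, and satisfies $d(\Psi,\mathrm{id})\le 2^{-k}$. Conjugating such partial shifts (or skew products over the large factor acting by homeomorphisms of the small factor) by suitable diagonal homeomorphisms does separate coincidences in a prescribed window, but one must also check that no new coincidences among the tracked coordinates are created; this is a genuine argument, not a parenthetical remark.

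It is worth adding that the obstruction you identified is real and is not addressed by the paper either: the paper proves the local-orbit condition only for the diagonal copy of $\homeo(\hilbert)$, and if $x$ is a transitive point with $x(i)=x(j)$ for some $i\ne j$ (such points exist, e.g.\ transitive points built by concatenating patterns containing repetitions), then its entire diagonal local orbit lies in the closed nowhere dense set $\{y:y(i)=y(j)\}$, hence is nowhere dense. For such points the non-diagonal automorphisms are indispensable. So on this point your proposal is more candid than the paper about where the difficulty sits, but as written it does not close it.
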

\begin{proof} Fix $x\in (\hilbert)^\Z$ that is a transitive point with respect to the shift action on $(\hilbert)^\Z$. Fix a metric $d$ on $\hilbert$. Since the shift $S$ belongs to $\Aut((\hilbert)^\Z)$, the orbit of $x$ with respect to the action of $\Aut((\hilbert)^\Z)$ is also dense.  Note that the group $\homeo(\hilbert)$ can be considered to be a subgroup of $\Aut((\hilbert)^\Z)$ with $h\in \homeo(\hilbert)$ acting on $(\ldots,t_{-1},t_0,t_1,\ldots)\in (\hilbert)^\Z$ by 
\[h(\ldots,t_{-1},t_0,t_1,\ldots)=(\ldots,h(t_{-1}),h(t_0),h(t_1),\ldots).
\]
Now, the fact that every local orbit of $x$ with respect to the action of $\homeo(\hilbert)$ is somewhere dense follows from the fact that finite subsets of the Hilbert cube are $Z$-sets \cite[Lemma 6.2.3]{vanmill} and the extension theorem \cite[Theorem 6.4.6]{vanmill} saying that for every $Z$-sets $E,F\subseteq\hilbert$, every homeomorphism $f\colon E\to F$ with $d(f,\mathrm{id})<\varepsilon$ extends to a homeomorphism $g\colon\hilbert\to\hilbert$ such that $d(g,\mathrm{id})<\varepsilon$. 


Now we show that each orbit of the action of $\Aut((\hilbert)^\Z)$ on $\{x\in(\hilbert)^\Z\mid  x\mbox{ is transitive}\}$ is meager.
        Write $\Bar{0}$ for the sequence in $\hilbert$ with all coordinates equal to $0$ and $\Bar{1}$ for the sequence in $\hilbert$ with all coordinates equal to $1$. 
        Choose a sequence $(n_k)$ such that $S^{n_{k}}(x)$ converges to $x$. Write \[K=\{y\in(\hilbert)^\Z\mid  S^{n_{k}}(y) \mbox{ converges to } y\}.\]
        Clearly, $K$ depends on our choice of $x$ and $(n_{k})$ but we do not indicate it in our notation. 
        Observe that $x\in K$ and that $K$ is invariant under the action of $\Aut((\hilbert)^\Z$. We claim that $K$ is meager. To see that, consider the sets $$H_0=\{y\in(\hilbert)^\Z:\forall m\in\mathbb{N}\ \exists k\in\mathbb{N}\  d(y(n_{2k}),\Bar{0})<\frac{1}{m}\} $$ and $$H_1=\{y\in(\hilbert)^\Z:\forall m\in\mathbb{N}\ \exists k\in\mathbb{N}\  d(y(n_{2k+1}),\Bar{1})<\frac{1}{m}\}.$$ 
        Note that $K$ is disjoint from $H_0\cap H_1$, so it is enough to note that each $H_0$ and $H_1$ is comeager. Write $$H_0=\bigcap_{m\in\mathbb{N}}\bigcup_{k\in\mathbb{N}}\{y\in(\hilbert)^Z\mid d(y(n_{2k}),\Bar{0})<\frac{1}{m}\} $$ and note that for each $m$ the set $\bigcup_{k\in\mathbb{N}}\{y\in(\hilbert)^\Z\mid  d(y(n_{2k}),\Bar{0})<\frac{1}{m}\} $ is open and dense in $(\hilbert)^\Z$. The argument for $H_1$ is analogous.
\end{proof}

As noted by Bruin and Vejnar \cite{BV}, the conjugacy relation of transitive pointed Hilbert cube systems is a Borel equivalence relation, by the result of Kaya \cite{kayaambits}. In \cite[Question 5.6]{BV} Bruin and Vejnar ask about its complexity. Below we prove Theorem \ref{bruin.vejnar} by showing that this relation is Borel bi-reducible with the action of $\Aut((\hilbert)^\Z)$ on the set of shift transitive points in $(\hilbert)^\Z$. 

\begin{proof}[Proof of Theorem \ref{bruin.vejnar}]
    It is enough to note that the conjugacy of pointed transitive Hilbert cube systems is Borel bi-reducible with the action of $\Aut((\hilbert)^\Z)$ on the set $\{x\in(\hilbert)^\Z\mid x\mbox{ is transitive}\}$. 
    One direction is straightforward: given a transitive point $x$ in $(\hilbert)^\Z$ we map it to a pointed transitive Hilbert cube system $((\hilbert)^\Z),\sigma,x)$. On the other hand, given a Hilbert cube system $(\hilbert,\varphi,x)$ we map it to a transitive point in the shift in the following way. Fix a bi-infinite sequence $(n_k)_{k\in\mathbb{Z}}$ such that every finite sequence of positive integers is listed as a subsequence whose indices are consecutive integers.
    In particular, for each $j\in\mathbb{Z}$ and for every $m\in\mathbb{N}$ there is an integer $i_j(m)$ such that for every $k$ in the interval $I_j^m=[i_j(m)-m,i_j(m)+m]$ we have $n_k=j$. We now define a map $\theta$ taking a pointed transitive system $(\hilbert,\varphi,x)$ to  \[\theta((\hilbert,\varphi,x))=(\varphi^{n_k}(x))_{k\in\Z}\in(\hilbert)^\Z.\] The point $\theta((\hilbert,\varphi,x))$ is clearly transitive in $(\hilbert)^\Z$. Since the sequence $(n_k)$ is fixed, a pair of pointed transitive Hilbert cube systems $(\hilbert,\varphi,x)$ and $(\hilbert,\psi,y)$ that are conjugate through a map $\eta\colon\hilbert\to\hilbert$ with $\eta(x)=y$ is mapped to two transitive points \[\theta((\hilbert,\varphi,x))\text{ and } \theta((\hilbert,\psi,y))\] that are conjugate through $(\ldots,\eta,\eta,\eta,\ldots)\in \Aut((\hilbert)^\Z)$. It remains to show that if $(\varphi^{n_k}(x))_{k\in\Z}\in(\hilbert)^\Z$ and $(\psi^{n_k}(y))_{k\in\Z}\in(\hilbert)^\Z$ are conjugate through a map $\zeta\in \Aut((\hilbert)^\Z)$, then the pointed transitive systems $(\hilbert,\varphi,x)$ and $(\hilbert,\psi,y)$ are conjugate as well. Note that $\zeta$ must send fixed points of the shift to the fixed points of the shift, so $\zeta$ induces a homeomorphism $\eta$ of the Hilbert cube such that if $z\in \hilbert$ and $\bar z =(\ldots,z,z,z,\ldots)\in(\hilbert)^\Z$ then $\zeta(\bar z)=(\ldots,\eta(z),\eta(z),\eta(z),\ldots)$. For every $j\in\Z$ and $m\in\mathbb{N}$ we also have 
    \[
    \zeta(\sigma^{i_j(m)}(\theta((\hilbert,\varphi,x)))=\sigma^{i_j(m)}(\theta((\hilbert,\psi,y)).
    \]
    Therefore, the fixed point of the shift $(\ldots,\varphi^j(x),\varphi^j(x),\varphi^j(x),\ldots)$ that is the limit of $\sigma^{i_j(m)}(\theta((\hilbert,\varphi,x))$ as $m\to\infty$ must be mapped by $\zeta$ to the limit of $\sigma^{i_j(m)}(\theta((\hilbert,\psi,y))$, which means that for every $j\in\Z$  the homeomorphism $\eta$ maps $\varphi^j(x)$  to $\psi^j(y)$. It means that pointed transitive systems $(\hilbert,\varphi,x)$ and $(\hilbert,\psi,y)$ are conjugate.
\end{proof}
\section{Remarks and questions}

  It seems plausible that the conjugacy of both pointed transitive symbolic systems and transitive symbolic systems should be universal countable Borel equivalence relations. In \cite{kwietniak} the second named author of the present paper had announced that the classification problem of symbolic systems with the specification property is a universal countable Borel equivalence relation, but the proof contained a mistake. Therefore the following question remains open.

      \begin{question}
          Is the conjugacy relation for symbolic systems with the specification property bi-reducible with the universal countable Borel equivalence relation?
      \end{question}

Finally, one can also consider the conjugacy relation for arbitrary compact systems. Vejnar \cite{vejnar} proved that the conjugacy relation for transitive compact systems is a complete orbit equivalence relation and the third author with Li \cite{LiPeng} showed that the conjugacy relation for pointed minimal compact systems is not classifiable by countable structures. We do not know if the latter relation has the same complexity as that for pointed transitive Hilbert  cube systems.
      \begin{question}
          Does the conjugacy relation for pointed  minimal compact systems have the same complexity as for pointed transitive Hilbert  cube systems?
     \end{question}

\bibliographystyle{alpha}

\bibliography{bibliography} 

\newcommand{\etalchar}[1]{$^{#1}$}
\begin{thebibliography}{DGRK{\etalchar{+}}24}

\bibitem[BDWY22]{bdwy}
Michael Burr, Suddhasattwa Das, Christian Wolf, and Yun Yang.
\newblock Computability of topological pressure on compact shift spaces beyond finite type.
\newblock {\em Nonlinearity}, 35(8):4250--4282, 2022.

\bibitem[Ber88]{Bertrand}
Anne Bertrand.
\newblock Specification, synchronisation, average length.
\newblock In G.~Cohen and P.~Godlewski, editors, {\em Coding Theory and Applications}, pages 86--95, Berlin, Heidelberg, 1988. Springer Berlin Heidelberg.

\bibitem[BK14]{buhanan2014cocyclic}
David Buhanan and Jaroslaw Kwapisz.
\newblock Cocyclic subshifts from diophantine equations.
\newblock {\em Dynamical Systems}, 29(1):56--66, 2014.

\bibitem[BLR88]{boyle1988}
Mike Boyle, Douglas Lind, and Daniel Rudolph.
\newblock The automorphism group of a shift of finite type.
\newblock {\em Transactions of the American Mathematical Society}, 306(1):71--114, 1988.

\bibitem[BMN00]{blanchard}
Fran{\c{c}}ois Blanchard, Alejandro Maass, and Arnaldo Nogueira.
\newblock {\em Topics in symbolic dynamics and applications}, volume 279.
\newblock Cambridge University Press, 2000.

\bibitem[Bowa]{rufus}
Rufus Bowen.
\newblock Problem 32.
\newblock \url{https://bowen.pims.math.ca/problems/32}.

\bibitem[Bowb]{rufus.list}
Rufus Bowen.
\newblock Problem {L}ist.
\newblock \url{https://bowen.pims.math.ca/problems/}.

\bibitem[Bow71]{Bowen71}
Rufus Bowen.
\newblock Periodic points and measures for {A}xiom {$A$} diffeomorphisms.
\newblock {\em Trans. Amer. Math. Soc.}, 154:377--397, 1971.

\bibitem[BPR24]{bpr}
Marie-Pierre B\'{e}al, Dominique Perrin, and Antonio Restivo.
\newblock Unambiguously coded shifts.
\newblock {\em European J. Combin.}, 119:Paper No. 103812, 18, 2024.

\bibitem[Bru22]{bruin2022topological}
Henk Bruin.
\newblock {\em Topological and ergodic theory of symbolic dynamics}, volume 228.
\newblock American Mathematical Society, 2022.

\bibitem[BV23]{BV}
Henk Bruin and Benjamin Vejnar.
\newblock Classification of one dimensional dynamical systems by countable structures.
\newblock {\em The Journal of Symbolic Logic}, 88(2):562–578, 2023.

\bibitem[CC24]{calderoni2023condensation}
Filippo Calderoni and Adam Clay.
\newblock Condensation and left-orderable groups.
\newblock {\em Proceedings of the AMS, Series B}, pages 579--588, 2024.

\bibitem[CG01]{CamerloGao}
Riccardo Camerlo and Su~Gao.
\newblock The completeness of the isomorphism relation for countable {B}oolean algebras.
\newblock {\em Transactions of the American Mathematical Society}, 353(2):491--518, 2001.

\bibitem[Cle09]{Clemens}
John~D. Clemens.
\newblock Isomorphism of subshifts is a universal countable {B}orel equivalence relation.
\newblock {\em Isr. J. Math.}, 170:113--123, 2009.

\bibitem[Cle12]{clemens2012isometry}
John~D Clemens.
\newblock Isometry of {P}olish metric spaces.
\newblock {\em Annals of Pure and Applied Logic}, 163(9):1196--1209, 2012.

\bibitem[DG20]{DG}
Longyun Ding and Kai Gu.
\newblock On equivalence relations generated by {C}auchy sequences in countable metric spaces.
\newblock {\em Annals of Pure and Applied Logic}, 171(10):102854, 2020.

\bibitem[DGRK{\etalchar{+}}24]{DGKKK}
Konrad Deka, Felipe Garc\'{i}a-Ramos, Kosma Kasprzak, Philipp Kunde, and Dominik Kwietniak.
\newblock The conjugacy and flip conjugacy problem for {C}antor minimal systems.
\newblock In preparation, 2024.

\bibitem[DJK94]{dougherty1994structure}
Randall Dougherty, Steve Jackson, and Alexander~S Kechris.
\newblock The structure of hyperfinite borel equivalence relations.
\newblock {\em Transactions of the American mathematical society}, 341(1):193--225, 1994.

\bibitem[Dow05]{downar-survey}
Tomasz Downarowicz.
\newblock Survey of odometers and {T}oeplitz flows.
\newblock In {\em Algebraic and topological dynamics}, volume 385 of {\em Contemp. Math.}, pages 7--37. Amer. Math. Soc., Providence, RI, 2005.

\bibitem[DP22]{dp}
Fabien Durand and Dominique Perrin.
\newblock {\em Dimension groups and dynamical systems---substitutions, {B}ratteli diagrams and {C}antor systems}, volume 196 of {\em Cambridge Studies in Advanced Mathematics}.
\newblock Cambridge University Press, Cambridge, 2022.

\bibitem[DSR18]{debs2018descriptive}
Gabriel Debs and Jean Saint~Raymond.
\newblock The descriptive complexity of the set of all closed zero-dimensional subsets of a {P}olish space.
\newblock {\em Topology and its Applications}, 249:43--66, 2018.

\bibitem[FKSV23]{realizations}
J.S. Frish, A.S. Kechris, F.~Shinko, and Z.~Vidny\'{a}nszky.
\newblock {\em Realizations of countable Borel equivalence relations}.
\newblock 2023.
\newblock arXiv:2109.12486.

\bibitem[FRW11]{foreman2011conjugacy}
Matthew Foreman, Daniel~J Rudolph, and Benjamin Weiss.
\newblock The conjugacy problem in ergodic theory.
\newblock {\em Annals of Mathematics}, pages 1529--1586, 2011.

\bibitem[FS89]{friedman1989borel}
Harvey Friedman and Lee Stanley.
\newblock A borel reductibility theory for classes of countable structures.
\newblock {\em The Journal of Symbolic Logic}, 54(3):894--914, 1989.

\bibitem[FSZ24]{frisch2024minimal}
Joshua Frisch, Brandon Seward, and Andy Zucker.
\newblock Minimal subdynamics and minimal flows without characteristic measures.
\newblock In {\em Forum of Mathematics, Sigma}, volume~12, page e58. Cambridge University Press, 2024.

\bibitem[FT17]{frisch2017symbolic}
Joshua Frisch and Omer Tamuz.
\newblock Symbolic dynamics on amenable groups: the entropy of generic shifts.
\newblock {\em Ergodic Theory and Dynamical Systems}, 37(4):1187--1210, 2017.

\bibitem[FW04]{foreman2004anti}
Matthew Foreman and Benjamin Weiss.
\newblock An anti-classification theorem for ergodic measure preserving transformations.
\newblock {\em Journal of the European Mathematical Society}, 6(3):277--292, 2004.

\bibitem[GJS16]{GaoJacksonSeward}
Su~Gao, Steve Jackson, and Brandon Seward.
\newblock Group colorings and {B}ernoulli subflows.
\newblock {\em Memoirs of the AMS}, 241(1141), 2016.

\bibitem[GK03]{gao2003classification}
Su~Gao and Alexander~S Kechris.
\newblock {\em On the classification of Polish metric spaces up to isometry}.
\newblock American Mathematical Soc., 2003.

\bibitem[GTWZ21]{glasner2021bernoulli}
Eli Glasner, Todor Tsankov, Benjamin Weiss, and Andy Zucker.
\newblock Bernoulli disjointness.
\newblock {\em Duke Mathematical Journal}, 170(4):751–780, 2021.

\bibitem[Hed69]{hedlund}
Gustav~A Hedlund.
\newblock Endomorphisms and automorphisms of the shift dynamical system.
\newblock {\em Mathematical systems theory}, 3(4):320--375, 1969.

\bibitem[Hjo00a]{Hjorth}
Greg Hjorth.
\newblock {\em Classification and orbit equivalence relations}.
\newblock American Mathematical Society, Providence, RI, 2000.

\bibitem[Hjo00b]{hjorth2000classification}
Greg Hjorth.
\newblock {\em Classification and orbit equivalence relations}.
\newblock Number~75. American Mathematical Soc., 2000.

\bibitem[JKL02]{JKL}
Steve Jackson, Alexander~S. Kechris, and A.~Louveau.
\newblock Countable {B}orel equivalence relations.
\newblock {\em Journal of Mathematical Logic}, 2(01):1--80, 2002.

\bibitem[Jun11]{jung}
Uijin Jung.
\newblock On the existence of open and bi-continuing codes.
\newblock {\em Trans. Amer. Math. Soc.}, 363(3):1399--1417, 2011.

\bibitem[Kay17a]{KayaSubshifts}
Burak Kaya.
\newblock The complexity of the topological conjugacy problem for toeplitz subshifts.
\newblock {\em Israel Journal of Mathematics}, 220:873--897, 2017.

\bibitem[Kay17b]{Kaya}
Burak Kaya.
\newblock The complexity of topological conjugacy of pointed {C}antor minimal systems.
\newblock {\em Archive for Mathematical Logic}, 56(3-4):215--235, 2017.

\bibitem[Kay17c]{kayaambits}
Burak Kaya.
\newblock On the complexity of topological conjugacy of compact metrizable $ g $-ambits.
\newblock {\em arXiv preprint arXiv:1706.09821}, 2017.

\bibitem[Kec12]{kechris2012classical}
Alexander Kechris.
\newblock {\em Classical descriptive set theory}, volume 156.
\newblock Springer Science \& Business Media, 2012.

\bibitem[K{\L}O16]{kwietniak2016panorama}
Dominik Kwietniak, Martha {\L}\c{a}cka, and Piotr Oprocha.
\newblock A panorama of specification-like properties and their consequences.
\newblock {\em Dynamics and numbers}, 669:155--186, 2016.

\bibitem[Kwi]{kwietniak}
Dominik Kwietniak.
\newblock On {P}roblem 32 from {R}ufus {B}owen's list: classification of shift spaces with specification.
\newblock \url{https://www.birs.ca/events/2019/5-day-workshops/19w5093/videos/watch/201905141630-Kwietniak.html}.

\bibitem[LP24]{LiPeng}
Ruiwen Li and Bo~Peng.
\newblock Isomorphism of pointed minimal systems is not classifiable by countable structures, 2024.
\newblock arXiv:2401.11310.

\bibitem[Osi21]{osin2021topological}
Denis Osin.
\newblock A topological zero-one law and elementary equivalence of finitely generated groups.
\newblock {\em Annals of Pure and Applied Logic}, 172(3):102915, 2021.

\bibitem[Pav20]{Pavlov}
Ronnie Pavlov.
\newblock On entropy and intrinsic ergodicity of coded subshifts.
\newblock {\em Proc. Amer. Math. Soc.}, 148(11):4717--4731, 2020.

\bibitem[PP00]{pemantle2000nonamenable}
Robin Pemantle and Yuval Peres.
\newblock Nonamenable products are not treeable.
\newblock {\em Israel Journal of Mathematics}, 118:147--155, 2000.

\bibitem[PS24]{paolinishelah}
Gianluca Paolini and Saharon Shelah.
\newblock Torsion-free abelian groups are {B}orel complete.
\newblock {\em Annals of Mathematics}, 199(3):1177--1224, 2024.

\bibitem[PSC23]{panagiotopoulos2023incompleteness}
Aristotelis Panagiotopoulos, George Sparling, and Marios Christodoulou.
\newblock Incompleteness theorems for observables in general relativity.
\newblock {\em Physical Review Letters}, 131(17):171402, 2023.

\bibitem[Sab16]{sabok2016completeness}
Marcin Sabok.
\newblock Completeness of the isomorphism problem for separable {C}*-algebras.
\newblock {\em Inventiones mathematicae}, 204(3):833--868, 2016.

\bibitem[ST17]{SabokTsankov}
Marcin Sabok and Todor Tsankov.
\newblock On the complexity of topological conjugacy of {T}oeplitz subshifts.
\newblock {\em Israel Journal of Mathematics}, 220:583–603, 2017.

\bibitem[Tho06]{thomsen2006ergodic}
Klaus Thomsen.
\newblock On the ergodic theory of synchronized systems.
\newblock {\em Ergodic Theory and Dynamical Systems}, 26(4):1235--1256, 2006.

\bibitem[Tho19]{Thomas}
Simon Thomas.
\newblock Topological full groups of minimal subshifts and the classification problem for finitely generated complete groups.
\newblock {\em Groups Geom. Dyn.}, 13(1):327--347, 2019.

\bibitem[Vej24]{vejnar}
B.~Vejnar.
\newblock {\em On conjugacy of transitive resp. minimal homeomorphisms for various base spaces}.
\newblock 2024.
\newblock arXiv:2401.16983.

\bibitem[VM88]{vanmill}
Jan Van~Mill.
\newblock {\em Infinite-dimensional {T}opology: {P}rerequisites and {I}ntroduction}.
\newblock Elsevier, 1988.

\bibitem[Wil84]{williams1}
Susan Williams.
\newblock Toeplitz minimal flows which are not uniquely ergodic.
\newblock {\em Zeitschrift f{\"u}r Wahrscheinlichkeitstheorie und Verwandte Gebiete}, 67(1):95--107, 1984.

\bibitem[Zie16]{zielinski2016complexity}
Joseph Zielinski.
\newblock The complexity of the homeomorphism relation between compact metric spaces.
\newblock {\em Advances in Mathematics}, 291:635--645, 2016.

\end{thebibliography}

\end{document}